\title[Limit models in heterogeneous catalysis]{The fast-sorption--fast-surface-reaction limit of a heterogeneous catalysis model}
\author{Bj\"orn Augner, Dieter Bothe}
\address{Technische Universit\"at Darmstadt, Fachbereich Mathematik, Schlossgartenstra\ss{}e 7, 64289 Darmstadt.}
\email{augner@mma.tu-darmstadt.de, bothe@mma.tu-darmstadt.de}
 \newtheorem{theorem}{Theorem}[section]
 \newtheorem{remark}[theorem]{Remark}
 \newtheorem{proposition}[theorem]{Proposition}
 \newtheorem{lemma}[theorem]{Lemma}
 \newtheorem{assumption}[theorem]{Assumption}
 \newtheorem{example}[theorem]{Example}
 \newtheorem{principle}[theorem]{Principle}
 \newtheorem{model}[theorem]{Model}
 \renewcommand{\vec}{\bd}
 \DeclareMathOperator{\R}{\mathbb{R}}
 \DeclareMathOperator{\N}{\mathbb{N}}
 \DeclareMathOperator{\dv}{\operatorname{div}}
 \DeclareMathOperator{\lin}{\mathrm{span}}
 \renewcommand{\ker}{\mathrm{N}}
 \newcommand{\bd}[1]{\boldsymbol{#1}}
 \newcommand{\bb}[1]{\boldsymbol{#1}}
 \newcommand{\dd}{\, \mathrm{d}}
 \newcommand{\norm}[1]{\left\| #1 \right\|} 
 \newcommand{\abs}[1]{\left| #1 \right|}
 \newcommand{\diag}{\operatorname{diag}}
 \newcommand{\BB}{\mathcal{B}}
 \newcommand{\LL}{\mathcal{L}}
 \newcommand{\WW}{\mathcal{W}}
 \newcommand{\CC}{\mathcal{C}}
 \newcommand{\DD}{\mathcal{D}}
 \newcommand{\II}{\mathcal{I}}
\begin{document}
 \allowdisplaybreaks[1]

 \begin{abstract}
  Every mathematical model describing physical phenomena is an approximation to model reality, hence has its limitations. Depending on characteristic values of the variables in the model, different aspects of the model and, e.g., thermodynamic mechanisms have to be emphasised, or may be neglected in a reduced limit model.
  Within this paper, a heterogeneous catalysis system will be considered consisting of a bulk phase $\Omega$ (chemical reactor) and an active surface $\Sigma = \partial \Omega$ (catalytic surface), between which chemical substances are exchanged via adsorption (transport of mass from the bulk boundary layer adjacent to the surface, leading to surface-accumulation by a transformation into an adsorbed form) and desorption (vice versa). Quite typically, as is the purpose of catalysis, chemical reactions on the surface occur several orders of magnitude faster than, say, chemical reactions within the bulk, and sorption processes are often quite fast as well.
  Starting from the non-dimensional version, different limit models, especially for fast surface chemistry and fast sorption at the surface, are considered. For a particular model problem, questions of local-in-time existence of strong and classical solutions, positivity of solutions and blow-up criteria for global existence are addressed.
 \end{abstract} 
 
 \keywords{Heterogeneous catalysis, dimension analysis, reaction diffusion systems, surface chemistry, surface diffusion, sorption, quasilinear PDE, $\LL_p$-maximal regularity, positivity, blow-up.}

 \subjclass[2010]{Primary 35K57; Secondary 35K51, 80A30, 92E20.}
 
 \maketitle

 \vspace{0.5cm}
 Version of \today.
 
 \section{Introduction}
 \subsection{Continuum thermodynamic modelling of reactive fluid mixtures}
 In this paper, the modelling and analysis of a prototypical chemical reactor with a catalytic surface are considered. Mathematically, the chemical reactor is described by a bounded domain $\Omega \subseteq \R^3$, in which chemical substances $A_i$, $i = 1, \ldots, N$, may diffuse and react with each other. The evolution in time of these substances can be described on different scales of accuracy, depending on the particular interest one has in the model:
 These include, e.g.\ atomistic models, models derived from statistical physics, continuum thermodynamic, or effective (integral) models.
 On atomic level, for example, for each molecule of any species it is kept account of all its mechanical variables, i.e.\ position, (linear) momentum, angular momentum etc.\ and the interaction with all the other molecules has to be included in the model. Clearly, for a (numerically) computable model of the dynamics this is barely a feasible approach as the number of unknown exceeds present computing capacities. The other extreme would be the model of a perfectly stirred chemical reactor, where any spatial dependence is neglected, i.e.\ at any time it is assumed that the molecules are homogeneously distributed in the chemical reactor. Note that this approach is actually not compatible with the model of molecules, but needs an underlying \emph{thermodynamic description} of the reaction diffusion system: Instead of a molecular viewpoint, where the number $\dd n_i$ of molecules of type $A_i$ in some volume element $\dd \Omega$ could simply be counted, the concentration $c_i \approx \frac{\dd n_i}{|\dd \Omega|}$ is introduced and used as a thermodynamic variable, describing the system on a macroscopic level. In systems which are not ideally mixed, the concentration does not only depend on the time variable $t$ but on the spatial position $\vec z \in \Omega$ as well, i.e.\ $c_i = c_i(t, \vec z)$, and this is the viewpoint adapted throughout the present manuscript.
 The evolution of the thermodynamic variable $c_i$ will be described by balance laws which together with thermodynamic closure relations give a set of partial differential equations, then.
 The fundamental balance law is the \emph{individual mass balance}
  \[
   \partial_t \rho_i + \dv (\rho_i \vec v_i)
    = R_i(\vec \rho),
    \quad
    t \geq 0, \, \vec z \in \Omega
  \]
 where $\rho_i = M_i c_i$ is the mass density of species $A_i$ with $M_i > 0$ its molar mass, $\rho_i \vec v_i$ is the \emph{individual (total) mass flux} of species $A_i$ as the product of its mass density $\rho_i$ and continuum mechanical velocity $\vec v_i$, and $R_i(\vec \rho)$ the total reaction rate for species $A_i$, $i = 1, \ldots, N$. The latter two have to be modelled via appropriate closure relations and may, in general, depend on the full vector $\vec \rho = (\rho_1, \ldots, \rho_N)^\mathsf{T}$ of mass densities and its gradient $\nabla \vec \rho$.
 For the chemical reactions, it is usually assumed that they are mass-conserving (on a non-relativistic level), so that $\sum_{i=1}^N R_i(\vec \rho) = 0$, leading to the continuity equation
  \[
   \partial_t \rho + \dv (\rho \vec v)
    = 0,
    \quad
    t \geq 0, \, \vec z \in \Omega
  \] 
 with the total mass flux density $\rho \vec v = \sum_{i=1}^N \rho_i v_i$. One defines the \emph{total mass density} as $\rho = \sum_{i=1}^N \rho_i$ and the \emph{barycentric velocity} of the fluid mixture as $\vec v = \frac{1}{\rho} \sum_{i=1}^N \rho_i \vec v_i$ (assuming $\rho > 0$, i.e.\ no local vacuum) and the individual mass fluxes decompose into $\rho \vec v_i = \rho_i (\vec v + \vec u_i) = \rho_i \vec v + \vec J_i$, where $\vec u_i$ denotes the velocity of species $A_i$ and $\vec J_i = \rho_i \vec u_i$ the \emph{diffusive flux} relative to the motion of the barycentre. Note that then, by definition, $\sum_{i=1}^N \vec J_i = 0$.
 Implicitly, here, the barycentric velocity has been identified with the mass averaged velocity $\vec v^\mathrm{mass}$, which was questioned rather recently, cf.\ e.g.\ \cite{Bre04}.
 Introducing the \emph{molar reaction rates} $r_i = \frac{1}{M_i} R_i$, the balance equations for the molar concentrations, viz.\
  \[
   \partial_t c_i + \dv (c_i \vec v_i)
    = r_i(\vec c),
    \quad
    t \geq 0, \, \vec z \in \Omega
  \]
 follow directly from the individual mass balances.
 Similar to the barycentric variables, one may introduce the \emph{molar averaged velocity} $\vec v^\mathrm{mol} = \frac{1}{c} \sum_{i=1}^N c_i \vec v_i$, where $c = \sum_{i=1}^N c_i$ denotes the total concentration density of the fluid, and $\vec u_i^\mathrm{mol} = \vec v_i - \vec v^\mathrm{mol}$ (the velocity of species $A_i$ relative to the molar averaged velocity $\vec v^\mathrm{mol}$), and $\vec j_i^\mathrm{mol} = c_i \vec u_i^\mathrm{mol}$ (the \emph{molar diffusive flux}), so that the molar balance equations may be rephrased as
  \[
   \partial_t c_i + \dv (c_i \vec v^\mathrm{mol} + \vec j_i^\mathrm{mol})
    = r_i(\vec c),
    \quad
    t \geq 0, \, \vec z \in \Omega.
  \]
 Let us note in passing that diffusive mass fluxes will be denoted by capital $\vec J_i$, whereas molar diffusive fluxes will be written as $\vec j_i$.
 Without superscript, diffusive fluxes are taken relative to the barycentric motion of the fluid, whereas a superscript like $\mathrm{mol}$ indicates that the diffusive flux is taken relative to the molar averaged velocity.
 \newline
 To get a mathematically well-posed model, certain boundary conditions have to be set at the boundary $\partial \Omega$ of the chemical reactor. Typical boundary conditions considered in the mathematical literature are, e.g., Dirichlet boundary conditions, Robin boundary conditions or Neumann boundary conditions.
 Often one finds the special case of \emph{no flux} $(c_i \vec v + \vec j_i) \cdot \vec n = 0$ boundary conditions (homogeneous Neumann boundary conditions), where $\vec n$ denotes the \emph{outer normal} vector field on $\partial \Omega$.
 However, in the case of a bulk-surface reaction-diffusion system as considered here, the boundary conditions on $c_i$ and/or $\vec j_i$ at $\partial \Omega$ are rather \emph{transmission conditions} with a reaction-diffusion system on the active surface $\Sigma \subseteq \partial \Omega$.
 Here, for simplicity, the case $\Sigma = \partial \Omega$ will be considered, but a large amount of the modelling and analysis carries over to the case of a boundary $\partial \Omega$ which is disjointly decomposed into an active surface part $\Sigma \subseteq \partial \Omega$ and, say, a no-flux boundary part $\Sigma \setminus \partial \Omega$, typically both being relatively open subsets of $\partial \Omega$.
 On the surface, the \emph{surface molar concentrations} $c_i^\Sigma$ as thermodynamic variables describe the molar concentration of species $A_i^\Sigma$ (which is interpreted as an adsorbed version of species $A_i$) per area element, and it obeys the general reaction-diffusion-adsorption balance equation
  \[
   \partial_t c_i^\Sigma + \dv_\Sigma (c_i^\Sigma \vec v^{\Sigma,\mathrm{site}} + \vec j_i^{\Sigma,\mathrm{site}})
    = r_i^\Sigma(\vec c^\Sigma) + s_i^\Sigma(\vec c|_\Sigma, \vec c^\Sigma),
    \quad
    t \geq 0, \, \vec z \in \Sigma,
  \]
 where -- similar to before -- the individual mass flux is decomposed into a convective and a diffusive part via $c_i \vec v_i^\Sigma = c_i^\Sigma \vec v^{\Sigma,\mathrm{site}} + \vec j_i^{\Sigma,\mathrm{site}}$
 with $\vec v^{\Sigma,\mathrm{site}} = \frac{1}{c_S^\Sigma} \sum_{i=0}^N c_i^\Sigma \vec v_i^\Sigma$ the site averaged velocity on the surface.
 Here and in the following, we restrict ourselves to the situation where there is a maximal capacity $c_S^\Sigma$ (concentration of sites per area element on the solid surface), and each site can either be occupied by exactly one adsorbate or be free, i.e.\ unoccupied.
 In the latter case we think of the site being occupied by a species $A_0^\Sigma$ (vacancies / free sites) where we set the vacancy concentration to
  \[
   c_0^\Sigma
    = c_S^\Sigma - \sum_{i=1}^N c_i^\Sigma.
  \]
 Then $\vec j_i^\mathrm{site}(t,\cdot) = c_i \vec u_i^\mathrm{site} = c_i (\vec v_i^\Sigma - \vec v^{\Sigma,\mathrm{site}})$ denote the \emph{diffusive surface fluxes}, $r_i^\Sigma(\vec c^\Sigma)$ the molar surface reaction rates and $s_i^\Sigma(\vec c|_\Sigma, \vec c^\Sigma) = \vec j_i \cdot \vec n$ the normal flux through $\Sigma$ has the meaning of a \emph{sorption rate}. Here, by $\dv_\Sigma$ we denote the surface divergence which may be equivalently defined by charts for the (sufficiently regular) boundary $\partial \Omega$ or as $\dv_\Sigma \vec v^{\Sigma,\mathrm{mol}} = \operatorname{tr} ((\bb I - \vec n \otimes \vec n) \nabla \vec v^{\Sigma,\mathrm{mol}})$ for any continuously differentiable extension of $\vec v^{\Sigma,\mathrm{mol}}: \Sigma \rightarrow \R^n$ to a neighbourhood of $\Sigma$.
 Note that in this bulk-surface model, the molar surface concentrations $c_i^\Sigma$ do \emph{not} coincide with the restriction of the molar concentrations in the bulk phase to the surface, i.e.\ usually $c_i^\Sigma \neq c_i|_\Sigma$.
 Initially, the sorption rates $s_i^\Sigma(\vec c|_\Sigma, \vec c^\Sigma)$ are just defined as the normal fluxes.
 However, they may also be considered as being suitable \emph{models} for the sorption mechanisms at the surface so that, instead of the model
  \begin{align*}
   \partial_t c_i + \dv (c_i \vec v + \vec j_i)
    &= r_i(\vec c),
    &i = 1, \ldots, N, \, t \geq 0, \, \vec z \in \Omega,
    \\
   \partial_t c_i^\Sigma + \dv_\Sigma (c_i^\Sigma \vec v^{\Sigma,\mathrm{site}} + \vec j_i^{\Sigma,\mathrm{site}})
    &= r_i^\Sigma(\vec c^\Sigma) + (c_i \vec v + \vec j_i) \cdot \vec n,
    &i = 1, \ldots, N, \, t \geq 0, \, \vec z \in \Sigma,
  \end{align*}
rather the following version will be considered throughout the paper:
  \begin{align*}
   \partial_t c_i + \dv (c_i \vec v + \vec j_i)
    &= r_i(\vec c),
    &&i = 1, \ldots, N, \, t \geq 0, \, \vec z \in \Omega,
    \\
   \partial_t c_i^\Sigma + \dv_\Sigma (c_i^\Sigma \vec v^{\Sigma,\mathrm{site}} + \vec j_i^{\Sigma,\mathrm{site}})
    &= r_i^\Sigma(\vec c^\Sigma) + s_i^\Sigma(\vec c, \vec c^\Sigma),
    &&i = 1, \ldots, N, \, t \geq 0, \, \vec z \in \Sigma,
    \\
   (c_i \vec v + \vec j_i) \cdot \vec n
    &= s_i^\Sigma(\vec c, \vec c^\Sigma),
    &&i = 1, \ldots, N, \, t \geq 0, \, \vec z \in \Sigma.
  \end{align*}
 A schematic sketch of a reaction-diffusion-sorption system is provided by Figure~\ref{fig:diagram_sketch}.
  \begin{figure}
	\centering
	\includegraphics[scale = 1]{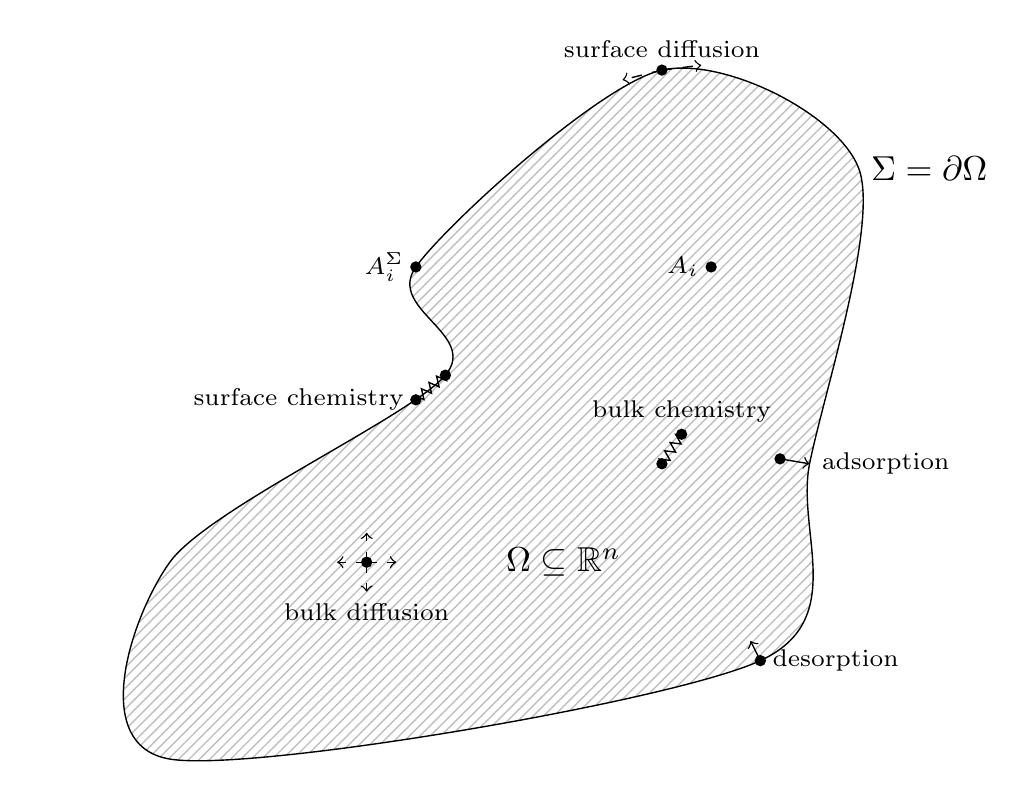}
	\caption{Physical and chemical mechanisms in a bulk-surface reaction diffusion system.}
	\label{fig:diagram_sketch}
 \end{figure}
 For a more complete thermodynamic modelling of heterogeneous catalysis, including linear momentum and internal energy balance, see \cite{SoOrMaBo19} and the references given there.
 
 \subsection{On the convective part of mass-transport}
 The total mass density of a fluid evolves w.r.t.\ the continuity equation
  \[
   \partial_t \rho + \dv (\rho \vec v)
    = 0,
    \quad
    t \geq 0, \, \vec z \in \Omega.
  \]
 Thus, for a thermodynamically fully consistent model, there is a strong interplay between the  barycentric velocity, which evolution depends on the total mass density $\rho = \sum_{i} \rho_i$ of the fluid, and the individual mass balance, which again involves the barycentric velocity, so that the barycentric velocity influences, but depends on the change of mass density due to diffusion and chemical reactions as well.
 In particular, a full model needs to include not only the solutes which are dissolved within a solvent, but the solvent itself as well.
 This makes such models highly complicated with delicate interaction between the several evolution mechanisms, and we will abstain from this most general setting.
 Instead, we consider only the evolution of the mass densities of the dissolved solvents $A_i$, $i = 1, \ldots, N$.
 Consequently, the model cannot be expected to respect, e.g., conservation of linear momentum anymore, since any interaction between the solutes and the solvent is neglected.
 Moreover, the barycentric velocity is only the barycentric velocity of the solutes instead of the whole fluid mixture.
 Therefore, suitable models for the barycentric velocity (of the solutes) are required.
 These may not be subject to momentum conservation, but only allow for some weakened form of thermodynamic consistency.
 Possible approaches are the following:
 \begin{enumerate}
  \item
   \emph{Darcy's law}: A common approximation used especially for porous media is the Darcy law
    \[
     \vec v
      = - \alpha \nabla p
    \]
   for the pressure $p$ which by the Gibbs-Duhem relation is given by
    \[
     p
      = \sum_{i=1}^N c_i \mu_i - c \psi
    \]
   with $\mu_i$ denoting the (molar) chemical potentials, and $\psi$ the (molar) free energy.
   In the isothermal case, the gradient of the pressure is directly related to the gradient of the chemical potentials via
    \[
     \nabla p
      = \sum_{i=1}^N c_i \nabla \mu_i.
    \]
   This approach, as means to eliminate the momentum equation in a reasonable (for certain cases) way, has recently been employed in \cite{DruJun19+}.
   It leads to the velocity model
    \[
     \vec v
      = - \alpha \sum_{i=1}^N c_i \nabla \mu_i.
    \]
,
  \item
   \emph{Experimental observations}: As mentioned in \cite{Haa63}, the molar averaged velocity comes closest to the (weighted) averaged velocity which can be experimentally measured. If one then restricts to systems at rest (in the sense of vanishing molar averaged velocity), i.e.\ $\vec v^\mathrm{mol} \approx \vec 0$, one may derive the barycentric velocity from this as
    \[
     \vec v
      = - \sum_{i=1}^N \frac{1}{c M_i} \vec J_i.
    \]
  \item
   \emph{Perturbation approach}: 
   If the barycentric velocity $v$ and its divergence field $\dv v$ obey certain bounds,
    \[
     \dv(c_i \vec v)
      = \vec v \cdot \nabla c_i + (\dv \vec v) c_i
    \]
   can be treated as a perturbation of the second-order differential operator corresponding to diffusive transport.
   We then consider $\vec v$ as an additional perturbation which then may be brought to the right-hand side of the balance equation for the molar concentrations $c_i$.
   Therefore, in derivation and mathematical analysis on the surface we restrict ourselves to the unperturbed fundamental case $\vec v = 0$.
 \end{enumerate}
Similarly, to have momentum conservation of the following limit models, one would need to take the particles constituting the solid into account.
Since we are looking for models of reasonable complexity, on the surface we do not consider the molar fluxes w.r.t.\ the barycentre, but rather focus on relative velocities w.r.t.\ the site averaged velocity of the surface fluid, so that in particular $\sum_{i=0}^N \vec j_i^{\Sigma,\mathrm{site}} = \vec 0$, and the further modelling and analysis will be done only for the unperturbed case of vanishing site averaged surface velocity $\vec v^{\Sigma,\mathrm{site}} = \vec 0$.

 \subsection{Reduced thermodynamic consistency}
 When considering the prototypical situation of vanishing barycentric velocity $\vec v \equiv \vec 0$ in the bulk and site averaged velocity $\vec v^{\Sigma,\mathrm{site}} = \vec 0$ on the surface, the reduced model cannot be expected to satisfy conservation of (linear) momentum, especially if one thinks of the exchange of momentum due to sorption processes at the surface.
 Since the solvent $A_{N+1}$ and the solid surface are not included in the model, this does not come as a surprise. However, one sensibly may ask which thermodynamic principles are satisfied yet, so that one may hopefully consider a subclass of models for the closure relations for diffusion, chemical reaction, adsorption and desorption, which, though not fully thermodynamically consistent, still satisfy some reduced form of thermodynamic consistency.
 \newline
 In this manuscript, we consider isothermal and isobaric systems, for which the second law of thermodynamics (entropy law) implies a \emph{free energy} dissipation principle, namely
  \begin{principle}[Free energy minimisation]
   Every thermodynamically consistent system which is isothermal and isobaric seeks to minimise its total free energy $F$, i.e.\ formally
    \[
     \frac{\dd F}{\dd t}
      \leq 0.
    \]
  \end{principle}
 Since the surface $\Sigma = \partial \Omega$ is assumed to be static, i.e.\ does not move or evolve in time, we may define the total free energy of the coupled bulk-surface system as
  \[
   F(t)
    = \int_\Omega (\rho \psi)(t,\vec z) \dd \vec z
     + \int_\Sigma (\rho^\Sigma \psi^\Sigma)(t,\vec z) \dd \sigma(\vec z)
    = \int_\Omega (c \psi^\mathrm{mol})(t,\vec z) \dd \vec z
     + \int_\Sigma (c^\Sigma \psi^{\Sigma,\mathrm{mol}})(t,\vec z) \dd \sigma(\vec z).
  \]
 Here, $\psi$ and $\psi^\Sigma$ denotes the \emph{specific free energy densities} in the bulk and on the surface, resp., and $\psi^\mathrm{mol}$, resp.\ $\psi^{\Sigma,\mathrm{mol}}$, the molar free energy density.
 The free energy density will be modelled such that $\rho \psi$ can be expressed as $c \psi^\mathrm{mol} = f(\vec c) = f(c_1, \ldots, c_N)$ and $c^\Sigma \psi^{\Sigma,\mathrm{mol}} = f^\Sigma(\vec c^\Sigma) = f^\Sigma(c_0^\Sigma, c_1^\Sigma, \ldots, c_N^\Sigma)$, i.e.\ for the surface free energy density, the vacancies are treated as an independent species, though, under the constraint $\sum_{i=0}^N c_i^\Sigma = c_S^\Sigma$.
 (For deriving a model for the surface diffusion this is a convenient choice as it implies some symmetry properties of the Fick-Onsager coefficients; see the next subsection.)
 The \emph{molar chemical potentials} $\mu_i^\mathrm{mol}$ in the bulk and $\mu_i^{\Sigma,\mathrm{mol}}$ on the surface are then given by
  \[
   \mu_i^\mathrm{mol}
    = \frac{\partial f}{\partial c_i}(\vec c),
    \quad i = 1, \ldots, N
    \quad \text{and} \quad
   \mu_i^{\Sigma,\mathrm{mol}}
    = \frac{\partial f^\Sigma}{\partial c_i^\Sigma}(\vec c^\Sigma),
    \quad i = 0, 1, \ldots, N.
  \]
 Inserting the evolution equations for $\vec c$ and $\vec c^\Sigma$, and using the model approximation $\vec v = \vec 0$ and $\vec v^{\Sigma,\mathrm{site}} = \vec 0$, one may compute (using the divergence theorem in the bulk and on the surface as well as $s_0^\Sigma = - \sum_{i=1} s_i^\Sigma$) the change of total free energy formally as
  \begin{align*}
   \frac{\dd F}{\dd t}
    &= \frac{\dd}{\dd t} \big( \int_\Omega (c \psi^\mathrm{mol})(t,\vec z) \dd z
     + \int_\Sigma (c^\Sigma \psi^{\Sigma,\mathrm{mol}} \dd \sigma(\vec z) \big)
     \\
    &= \int_\Omega \sum_{i=1}^N (\mu_i r_i + \vec j_i^{\Sigma,\mathrm{site}} \cdot \nabla_\Sigma \mu_i^\Sigma)(t,\vec z) \dd \vec z
     + \int_\Sigma \sum_{i=0}^N (\mu_i^\Sigma r_i^\Sigma + \vec j_i^{\Sigma,\mathrm{site}} \cdot \nabla_\Sigma \mu_i^\Sigma)(t, \vec z) \dd \sigma(\vec z)
     \\ &\quad
     + \int_\Sigma \sum_{i=1}^N (\mu_i^\Sigma - \mu_0^\Sigma - \mu_i) s_i^\Sigma.
  \end{align*}
 Similar to the formulation of the entropy production rate for fully thermodynamically consistent models, cf.\ \cite{BoDr15}, the dissipative terms may be regrouped as follows:
  \begin{align*}
   \frac{\dd F}{\dd t}
    &= \int_\Omega \sum_{i=1}^N \mu_i r_i \dd \vec z
     + \int_\Omega \sum_{i=1}^N \vec j_i \cdot \nabla \mu_i \dd \vec z
      \\ &\quad
     + \int_\Sigma \sum_{i=0}^N \mu_i^\Sigma r_i^\Sigma \dd \sigma(\vec z)
     + \int_\Sigma \sum_{i=0}^N \vec j_i^{\Sigma,\mathrm{site}} \cdot \nabla_\Sigma \mu_i^\Sigma \dd \sigma(\vec z)
     + \int_\Sigma \sum_{i=1}^N (\mu_i^\Sigma - \mu_0^\Sigma - \mu_i) s_i^\Sigma
     \\
    &= - \int_\Omega \big( \zeta^\mathrm{chem} + \zeta^\mathrm{diff} \big) \dd \vec z
     - \int_\Sigma \big( \zeta^{\Sigma,\mathrm{chem}} + \zeta^{\Sigma,\mathrm{diff}} + \zeta^{\Sigma,\mathrm{sorp}} \big) \dd \sigma(\vec z),
  \end{align*}
 where
  \begin{align*}
   \zeta^\mathrm{chem}
    &= - \sum_{i=1}^N \mu_i r_i,
    \\
   \zeta^\mathrm{diff}
    &= - \sum_{i=1}^N \vec j_i \cdot \nabla \mu_i,
    \\
   \zeta^{\Sigma,\mathrm{chem}}
    &= - \sum_{i=0}^N \mu_i^\Sigma r_i^\Sigma,
    \\
   \zeta^{\Sigma,\mathrm{diff}}
    &= \sum_{i=0}^N \mu_i^\Sigma r_i^\Sigma
    \quad \text{and}
    \\
   \zeta^{\Sigma,\mathrm{sorp}}
    &= - \sum_{i=1}^N (\mu_i^\Sigma - \mu_0^\Sigma - \mu_i) s_i^\Sigma.
  \end{align*}
 We restrict ourselves to particular subclasses of closure relations, namely such that $\vec r = (r_i)_{i=1,\ldots,N} = \vec r(\vec \mu)$, $\bb J = (\vec j_i)_{i=1,\ldots,N} = \bb J(\nabla \vec \mu)$, $\vec r^\Sigma = (r_i^\Sigma)_{i=0,\ldots,N} = \vec r^\Sigma(\vec \mu^\Sigma)$ and $\vec s^\Sigma = (s_i^\Sigma)_{i=1,\ldots,N} = \vec s^\Sigma(\vec \mu_\mathrm{eff}^\Sigma - \vec \mu)$, $\bb J^\Sigma = (\vec j_i^\Sigma)_{0=1,\ldots,N} = \bb J(\nabla_\Sigma \vec \mu^\Sigma)$, i.e.\
  \begin{enumerate}
   \item
    the chemical reaction rates $r_i$ in the bulk are determined by the (vector of) chemical potentials $\vec \mu = (\mu_1, \ldots, \mu_N)^\mathsf{T}$;
   \item
    analogously, the chemical reaction rates $r_i^\Sigma$ on the surface are a function of the surface chemical potentials $\vec \mu^\Sigma = (\mu_0^\Sigma, \mu_1^\Sigma, \ldots, \mu_N^\Sigma)^\mathsf{T}$;
   \item
    diffusive fluxes $\vec j_i$ in the bulk can be expressed in terms of the gradient of chemical potentials $\nabla \vec \mu = (\nabla \mu_1, \ldots, \nabla \mu_N)^\mathsf{T}$;
   \item
    similarly, surface diffusive fluxes $\vec j_i^{\Sigma,\mathrm{site}}$ depend on the surface gradient of the surface chemical potentials $\nabla_\Sigma \vec \mu^\Sigma = (\nabla_\Sigma \mu_0^\Sigma, \ldots, \nabla_\Sigma \mu_N^\Sigma)^\mathsf{T}$;
   \item
    finally, the sorption rates $s_i^\Sigma$ are models as functions of the difference of chemical potentials $\vec \mu_\mathrm{eff}^\Sigma - \vec \mu = (\mu_i^\Sigma - \mu_0^\Sigma - \mu_i)_{i=1,\ldots,N}$ with the effective surface chemical potentials $\mu_{\mathrm{eff},i}^\Sigma = \mu_i^\Sigma - \mu_0^\Sigma$.
  \end{enumerate}
 To ensure that there is dissipation of free energy for this particular class, we model the closure relations such that the contribution to the change of free energy from every thermodynamic subprocess is non-negative, i.e.\ the terms $\zeta^\mathrm{diff}$, $\zeta^\mathrm{chem}$, $\zeta^{\Sigma,\mathrm{diff}}$, $\zeta^{\Sigma,\mathrm{chem}}$ and $\zeta^{\Sigma,\mathrm{sorp}}$ all become non-negative for every thermodynamically feasible process.
  \begin{remark}
   The factor $-1$ and the symbol $\zeta$ for the several contributions to the consumption of free energy hint at the corresponding contribution to the entropy production for the respective thermodynamic subprocesses.
   For more details, we refer to the manuscript \cite{AuBo19+1}.
  \end{remark}
 In the next subsection, we restrict ourselves to specific models for these closure relations. 
 
 \subsection{Models for bulk and surface diffusion, chemical reactions and sorption processes}
 Within this subsection, the underlying models for the diffusion processes in the bulk and on the surface, as well as for the chemical reactions and the sorption at the boundary are introduced. The time scale analysis will be based on these particular models, however it should be clear how to extend the arguments to the case of other types of models for the diffusion and/or chemical reactions, in particular to other models for the chemical potentials in the bulk or on the surface.
 The motivation and derivation of certain limit models will be established via the special case of Fickian diffusion in the bulk and a (single-site) multi-component Langmuir model on the surface and Fick--Onsager surface diffusion.
 More precisely, in the bulk $N$ chemical substances $A_i$, $i = 1, \ldots, N$, are considered which are dissolved in a solvent $A_{N+1}$. The concentrations $c_i$ of the solutes $A_i$ are assumed to be much smaller than the concentration of the solvent $A_{N+1}$, i.e.\ $c_i \ll c_{N+1}$, and, moreover, it is assumed that the solutes merely interact with the solvent, and interactions between distinct solutes can be neglected.
 Standard Fickian diffusion $\vec j_i = - d_i \nabla c_i$, where typically $d_i > 0$ depends on the distribution of $\vec c$, can neither constitute a thermodynamic consistent model for the diffusive fluxes nor is it consistent with the constraint $\sum_{i=1}^N \vec J_i = \vec 0$.
 Keep in mind that the convective flux $c_i \vec v$ is treated as a perturbation and the basic analysis here will be restricted to the fundamental case $\vec v = \vec 0$. 
 For dilute systems, however, it is still a reasonably good approximation to more general and thermodynamically consistent diffusion models such as Fick-Onsager diffusion or Maxwell--Stefan diffusion models; cf.\ the representation of the inversion of the Maxwell--Stefan relations in \cite[Lemma 2.2]{HeMePrWi17}.
 In this simplified situation, $d_i > 0$ will be further assumed to be constant, so that the bulk reaction-diffusion system takes the form
  \[
   \partial_t c_i - d_i \Delta c_i
    = r_i(\vec c),
    \quad
    i = 1, \ldots, N, \, t \geq 0, \, \vec z \in \Omega.
  \]
On the surface, however, low surface concentrations would be a highly unrealistic assumption, so that Fickian diffusion is ruled out, but a Fick-Onsager diffusion model (or, a Maxwell-Stefan type model) is employed instead.
Hence, it is assumed that
 \[
  \vec j_i^{\Sigma,\mathrm{site}}
   = - \sum_{i=0}^N d_{ij}^\Sigma(\vec c^\Sigma) \nabla_\Sigma c_j^\Sigma,
   \quad
   i = 0, 1, \ldots, N,
 \]
where the Fick-Onsager surface diffusion coefficients $d_{ij}^\Sigma = d_{ji}^\Sigma$ are symmetric in $i,j \in \{0, 1, \ldots, N\}$.
 Moreover, the $d_{ij}^\Sigma$ sum up to zero, i.e.\
 \[
  \sum_{j=0}^N d_{ij}^\Sigma(\vec c^\Sigma)
   = 0,
   \quad i = 1, \ldots, N
 \]
since $\sum_{i=0}^N \vec j_i^{\Sigma,\mathrm{site}} = 0$ to have consistency with the definition of the diffusive fluxes, i.e.\ $\vec j_i^{\Sigma,\mathrm{site}} = c_i^\Sigma (\vec v_i^\Sigma - \vec v^{\Sigma,\mathrm{site}})$ for the site averaged velocity $\vec v^{\Sigma,\mathrm{site}}$.
Moreover, analogously to our treatment of the bulk phase, vanishing molar averaged velocity $\vec v^{\Sigma,\mathrm{site}} = \vec 0$ will be assumed throughout.
Hence, the surface reaction-diffusion-sorption model takes the form
 \[
  \partial_t c_i^\Sigma - \dv_\Sigma \left( \sum_{j=0}^N d_{ij}^\Sigma(\vec c^\Sigma) \nabla_\Sigma c_j^\Sigma \right)
   = r_i(\vec c^\Sigma) + s_i(\vec c|_\Sigma, \vec c^\Sigma),
   \quad
   i = 0, 1, \ldots, N.
 \]
It remains to model the rate of chemical reactions in the bulk ($r_i$) and on the surface ($r_i^\Sigma$) as well as the sorption rates ($s_i^\Sigma$).
 \begin{remark}
  Throughout, we set
   \[
    r_0^\Sigma(\vec c^\Sigma)
     = - \sum_{i=1}^N r_i(\vec c^\Sigma),
     \quad
    s_0^\Sigma(\vec c|_\Sigma, \vec c^\Sigma)
     = - \sum_{i=1}^N s_i^\Sigma(\vec c|_\Sigma, \vec c^\Sigma).
   \]
  Then the balance equation for the vacancy concentrations $c_0^\Sigma$ follows from the balance equations for the adsorbates $A_i^\Sigma$, $i = 1, \ldots, N$.
 \end{remark}
To find a suitable mathematical model for the bulk chemistry, one typically starts with a set of formal reaction equations
 \[
  \sum_{i=1}^N \alpha_i^a A_i
   \rightleftharpoons \sum_{i=1}^N \beta_i^a A_i,
   \quad
   a = 1, \ldots, m
 \]
and uses the (molar) chemical potentials $\mu_i = \mu_i^0(T) + RT \ln \frac{c_i}{c_i^\ast}$, $i = 1, \ldots, N$, which fit well to the Fickian modelling of the bulk diffusion.
Note that, here, $\mu_i^0(T)$ is temperature-dependent but independent of the vector of concentrations $\vec c$.
Moreover, $R$ is the universal gas constant, $T > 0$ the absolute temperature which is assumed to be constant here (\emph{isothermal case}) and $\vec c^\ast = (c_1^\ast, \ldots, c_N^\ast)^\mathsf{T} \in (0,\infty)^N$ is some reference concentration (depending on the choice of $\mu_i^0(T)$).
 \begin{remark}[Free energy and pressure in the bulk]
  The (molar) chemical potentials $\mu_i(T) = \mu_i^0(T) + \ln x_i$, where $x_i = c_i / c_i^\ast$ correspond to free energy density in the bulk of the form
   \[
    c \psi^\mathrm{mol}
     = - p_0 + \sum_{i=1}^N c_i \left[ \mu_i^0(T) + RT (\ln x_i - 1) \right].
   \]
  By the Gibbs-Duham relation, the corresponding pressure is
   \[
    p
     = \sum_{i=1}^N c_i \mu_i - c \psi^\mathrm{mol}
     = p_0 + RT \sum_{i=1}^N c_i.
   \]
 \end{remark}
By the second law of thermodynamics (entropy principle) or the above reduced principle of free-energy minimisation, for the thermodynamic subprocess of bulk chemistry one should have $\zeta^\mathrm{chem} = \mathcal{A}_a R_a(\vec c) \geq 0$ for the affinity
 \[
  \mathcal{A}_a
   = \sum_{i=1}^N \mu_i \nu_i^a
   = \sum_{i=1}^N (\mu_i^0 + RT \ln \frac{c_i}{c_i^\ast}) (\beta_i^a - \alpha_i^a)
  \]
of the $a$-th reaction. The reaction velocity of the $a$-th reaction is modelled as the difference of forward and backward reaction velocities $R_a = R_a^f - R_a^b$, so that we assume
 \[
  (R_a^f - R_a^b) \sum_{i=1}^N (\mu_i^0 + RT \ln \frac{c_i}{c_i^\ast}) (\beta_i^a - \alpha_i^a)
  \geq 0.
 \]
In this paper, \emph{mass-action kinetics} is used as a model for chemical reactions in the bulk.
The latter may be derived as follows:
For one, say the forward, direction of the reversible chemical reactions, a law of the type
 \[
  R_a^f
   = k_a^f \prod_{i=1}^N c_i^{\alpha_i^a}
   =: k_a^f \vec c^{\vec \alpha^a},
   \quad
   a = 1, \ldots, m
 \]
with some $k_a^f > 0$, typically depending on $T$ and $\vec c$, is assumed (ansatz of reactive collisions).
Then, to ensure positivity of the binary product $\mathcal{A}_a R_a$, $R_a^b$ is determined by a logarithmic closure relation, viz.\
 \[
  \ln \frac{R_a^f}{R_a^b}
   = - \gamma^a \frac{1}{RT} \mathcal{A}_a
   \quad \text{with } \gamma^a > 0.
 \]
Note that, typically, the mixture is far away from chemical equilibrium, so that a linear closure is inappropriate and a logarithmic closure is used instead.
In what follows, we let $\gamma^a = 1$ which is sufficient for our modelling purpose.
This leads to the effective reaction velocity $R_a = k_a^f \vec c^{\vec \alpha^a} - k_a^b \vec c^{\vec \beta^a}$ with
 \[
  \frac{k_a^b}{k_a^f}
   = (\vec \mu^0)^{\vec \nu^a}
   \quad \text{for} \quad
  \vec \mu^0
   = (\mu_1^0, \ldots, \mu_N^0)^\mathsf{T},
 \]
which is exactly the standard mass-action kinetics law.
Note that, w.l.o.g., one may assume $c_i^\ast = 1$ (with physical dimension, though) after replacing $\mu_i^0$ by $\tilde \mu_i^0 = \mu_i^0 - RT \ln c_i^\ast$.
This convention will tacitly be used from now on.
\newline
On the surface, a realistic model for the chemical potentials should include the available space on the surface as well.
One possibility to construct a model for the chemical potential is to assume that there is a maximal capacity $c_S^\Sigma > 0$ on the surface, as only limited space it available for the adsorbed species. 
This imposes the constraint
 $
  \sum_{i=1}^N c_i \leq c_S^\Sigma
 $
 on the surface concentrations.
 Interpreting the \emph{free sites} (or \emph{vacancies}) as an additional species $A_0^\Sigma$ (see Figure~\ref{fig:diagram_free-sites})
 \begin{figure}
	\centering
	\includegraphics[scale = 1]{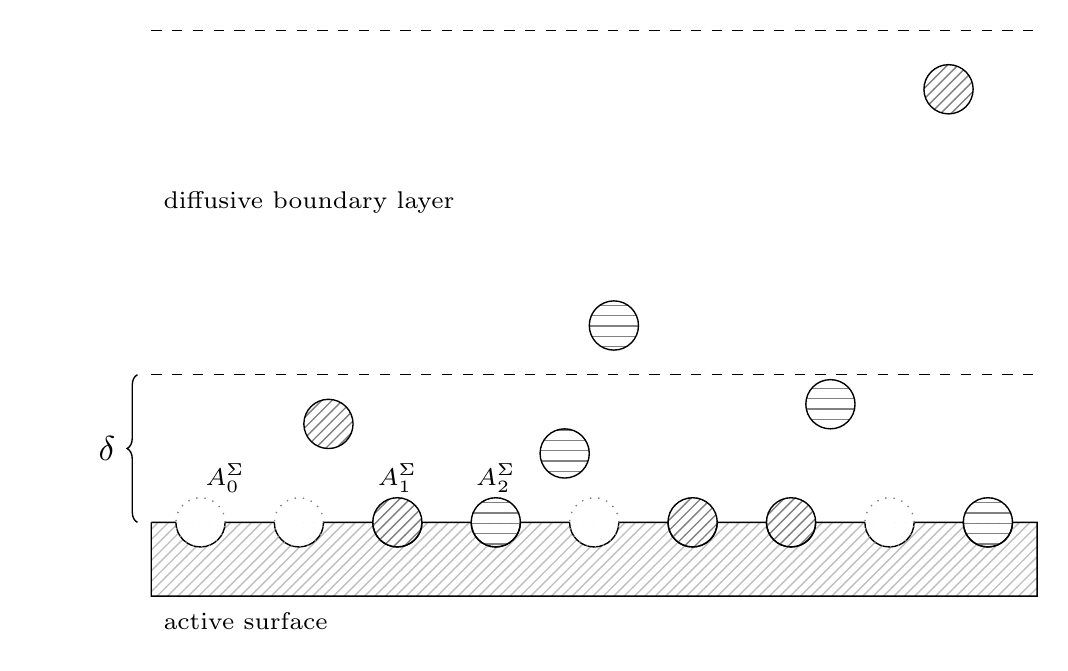}
	\caption{Free sites on the surface are interpreted as an additional species $A_0^\Sigma$.}
	\label{fig:diagram_free-sites}
 \end{figure}
and denoting by $c_0^\Sigma := c_S^\Sigma - \sum_{i=1}^N c_i^\Sigma$ the \emph{vacancy concentrations}, this constraint can be reformulated as
 \[
  \sum_{i=0}^N c_i = c_S^\Sigma
   \quad \text{and} \quad
   c_i \geq 0,
   \quad
   i = 0, 1, \ldots, N.
 \]
 Introducing the \emph{occupancy numbers} $\theta_i := c_i^\Sigma / c_S^\Sigma$, $i = 0,1, \ldots, N$, this constraint may be equivalently expressed as $\sum_{i=0}^N \theta_i = 1$.
 The surface free energy $c^\Sigma \psi^{\Sigma,\mathrm{mol}}$ is modelled as that of an ideal mixture of the species $A_0^\Sigma$, \ldots, $A_N^\Sigma$, i.e.\ (Langmuir model)
  \[
   c^\Sigma \psi^{\Sigma,\mathrm{mol}}
    = f^\Sigma(\vec c^\Sigma)
    = - p_0^\Sigma + \sum_{i=0}^N c_i^\Sigma (\mu_i^{\Sigma,0}(T) + \ln \theta_i)
  \]
 for some temperature-dependent parameters $\mu_i^{\Sigma,0}(T)$.
 One should keep in mind that $c_0^\Sigma = c_S^\Sigma - \sum_{i=1}^N c_i^\Sigma$ is not an independent variable, but nevertheless we assign a chemical potential $\mu_0^\Sigma$ to the vacancies $A_0^\Sigma$, so that
  \[
   c^\Sigma \psi^{\Sigma,\mathrm{mol}}
    = f^\Sigma (T, c_0^\Sigma, c_1^\Sigma, \ldots, c_N^\Sigma).
  \]
 The surface chemical potentials $\mu_i^\Sigma = \frac{\partial (c^\Sigma \psi^{\Sigma,\mathrm{mol}})}{\partial c_i^\Sigma}$, $i = 0, 1, \ldots, N$, are then modelled as
 \[
  \mu_i^\Sigma
   = \mu_i^{\Sigma,0}(T) + RT \ln \theta_i,
   \quad
   i = 0, 1, \ldots, N.
 \]
 Moreover, by the Gibbs-Duhem relation, one may compute the surface pressure as
  \[
   p^\Sigma
    = p_0^\Sigma + RT c_S^\Sigma.
  \]
 Considerations similar to those for the bulk chemistry, transferred to the surface chemical reactions
 \[
  \sum_{i=1}^N \alpha_i^{\Sigma,a} A_i^\Sigma
   \rightleftharpoons \sum_{i=1}^N \beta_i^{\Sigma,a} A_i^\Sigma,
   \quad
   a = 1, \ldots, m^\Sigma,
 \]
then lead to chemical reaction rates of the form
 \[
  R_a^\Sigma
   = k_a^{\Sigma,f} \vec \theta^{\vec \alpha^{\Sigma,a}} - k_a^{\Sigma,b} \vec \theta^{\vec \beta^{\Sigma,a}},
   \quad
   a = 1, \ldots, m^\Sigma.
 \]
To do so, we interpret surface chemical reactions $\sum_{i=1}^N \alpha_i^{\Sigma,a} A_i^\Sigma \rightleftharpoons \sum_{i=1}^N \beta_i^{\Sigma,a} A_i^\Sigma$ rather as chemical reactions of the type $\sum_{i=0}^N \alpha_i^{\Sigma,a} \rightleftharpoons \sum_{i=0}^N \beta_i^{\Sigma,a} A_i^\Sigma$, taking conservation of total sites into account and defining
 \[
  \alpha_0^{\Sigma,a}
   = \begin{cases}
    \sum_{i=0}^N \nu_i^{\Sigma,a},
    &\sum_{i=0}^N \nu_i^{\Sigma,a} \geq 0,
    \\
    0,
    &\text{else},
   \end{cases}
   \quad
  \beta_0^{\Sigma,a}
   = \begin{cases}
    - \sum_{i=0}^N \nu_i^{\Sigma,a},
    &\sum_{i=0}^N \nu_i^{\Sigma,a} \leq 0,
    \\
    0,
    &\text{else}
   \end{cases}
 \]
so that $\sum_{i=0}^N \nu_i^{\Sigma,a} = 0$ for $a = 1, \ldots, m^\Sigma$.
Here, $\vec \theta = (\theta_0, \vec \theta_\mathrm{red}) = (\theta_0, \ldots, \theta_N)^\mathsf{T}$, $\vec \alpha^{\Sigma,a} = (\alpha_0^{\Sigma,a}, \ldots, \alpha_N^{\Sigma,a})^\mathsf{T}$ and $\vec \beta^{\Sigma,a} = (\beta_0^{\Sigma,a}, \ldots, \beta_N^{\Sigma,a})^\mathsf{T}$.
 Moreover, the logarithmic closure gives the relations $k_a^{\Sigma,b} / k_a^{\Sigma, f} = (\vec \mu^{\Sigma,0})^{\vec \nu^{\Sigma,a}}$.
 Finally, the sorption rates $s_i^\Sigma(\vec c, \vec c^\Sigma)$ are modelled as reaction rates of chemical reactions of the form
  \[
   A_i 
    \rightleftharpoons A_i^\Sigma,
    \quad \text{or, taking site conservation into account,} \quad
   A_i + A_0^\Sigma
    \rightleftharpoons A_i^\Sigma.
  \]
 This leads to an effective reaction velocity of the form
  \[
   R_i^\mathrm{sorp}
    = R_i^\mathrm{ad} - R_i^\mathrm{de}
    = k_i^\mathrm{ad} c_i \theta_0 - k_i^\mathrm{de} c_i^\Sigma,
    \quad
    i = 1, \ldots, N,
  \] 
 where $\ln \left( \frac{R_i^\mathrm{ad}}{R_i^\mathrm{de}} \right) = \mu_i^\Sigma - \mu_0^\Sigma - \mu_i|_\Sigma$.
 To summarise, the bulk and surface chemistry, and the sorption rates are modelled as follows:
  \begin{align*}
   r_i(\vec c)
    &= \sum_{a=1}^m \nu_i^a (k_a^f \vec c^{\vec \alpha^a} - k_a^b \vec c^{\vec \beta^a}),
    &&i = 0, 1, \ldots, N,
    \\
   r_i^\Sigma(\vec c^\Sigma)
    &= \sum_{a=1}^{m^\Sigma} \nu_i^{\Sigma,a} (k_a^{\Sigma,f} \vec \theta^{\vec \alpha_\mathrm{ext}^{\Sigma,a}} - k_a^{\Sigma,b} \vec \theta^{\vec \beta_\mathrm{ext}^{\Sigma,a}}) c_S^\Sigma,
    &&i = 0, 1, \ldots, N
    \\
   s_i^\Sigma(\vec c^\Sigma)
    &= (k_i^\mathrm{ad} c_i \theta_0 - k_i^\mathrm{de} \theta_i) c_S^\Sigma,
    &&i = 1, \ldots, N.
  \end{align*}
 In the case of vanishing convective fluxes, the condensed form of the full heterogeneous reaction-diffusion-sorption model reads as
  \begin{align*}
   \partial_t c_i - d_i \Delta c_i
    &= \sum_{a} \nu_i^a (k_a^f \vec c^{\vec \alpha^a} - k_a^b \vec c^{\vec \beta^a})
    &&\text{in } (0, \infty) \times \Omega,
    \\
   \partial_t c_i^\Sigma + \dv_\Sigma \vec j_i^{\Sigma,\mathrm{mol}}
    &= \sum_{a} \nu_i^{\Sigma,a} c_S^\Sigma (k_a^{\Sigma, a} \vec \theta^{\vec \alpha_\mathrm{ext}^{\Sigma,a}} - k_a^{\Sigma, a} \vec \theta^{\vec \beta_\mathrm{ext}^{\Sigma,a}})
     + c_S^\Sigma (k_i^{\mathrm{ad}} c_i \theta_0 - k_i^{\mathrm{de}} c_i^\Sigma)
    &&\text{on } (0, \infty) \times \Sigma,
    \\
   - d_i \nabla c_i \cdot \vec n
    &= c_S^\Sigma (k_i^{\mathrm{ad}} c_i|_\Sigma \theta_0 - k_i^{\mathrm{de}} \theta_i)
    &&\text{on } (0, \infty) \times \Sigma
  \end{align*}
 with $\vec j_i^{\Sigma,\mathrm{site}} = - \sum_{j=0}^N d_{ij}^\Sigma(\vec c^\Sigma) \nabla_\Sigma c_j$, for $i = 1, \ldots, N$.
 
 \begin{remark}
  One may as well consider $c_0^\Sigma = c_S^\Sigma - \sum_{i=1}^N c_i^\Sigma$ and $\theta_0 = 1 - \sum_{i=1}^N \theta_i$ as dependent functions of the $c_i^\Sigma$'s resp.\ $\theta_i$', so that one might argue that the correct choice for the chemical potentials is
   \[
    \tilde \mu_i^\Sigma
     = \frac{\partial}{\partial c_i^\Sigma} \big( f(c_S^\Sigma - \sum_{j=1}^N c_j^\Sigma, c_1^\Sigma, \ldots, c_N^\Sigma) \big)
     = \mu_i^\Sigma - \mu_0^\Sigma.
   \]
  For this choice of the chemical potentials, however, the same reaction rate models result, as then $A_0^\Sigma$ is not treated as an independent species.
  The disadvantage of this approach is the fact that the diffusive surface fluxes in the Fick--Onsager model satisfy certain symmetry conditions due to the choice that one models the diffusive surface fluxes relative to the site averaged molar flux.
  On the other hand, the equilibrium condition at the surface would have the simpler form $\mu_i|_\Sigma = \tilde \mu_i^\Sigma$ instead of the form $\mu_i|_\Sigma = \mu_i^\Sigma - \mu_0^\Sigma$ for $i = 1, \ldots, N$.
 \end{remark}

 \subsection{Dimensionless formulation of the bulk-surface reaction-diffusion-sorption model}
 Typically, the different thermodynamic processes, i.e.\ bulk and surface diffusion, bulk and surface chemical reactions as well as sorption processes, occur on different time scales. Indeed, quite often, especially for heterogeneous catalysis models, sorption processes and chemical reactions on the surface are considerably faster then all the other thermodynamic processes. From this perspective, it is natural to consider the fast sorption or fast surface chemistry limit models of the general reaction diffusion sorption model, so that the surface diffusion may (partly) be replaced by quasi-static relations between $\vec c|_\Sigma$ and $\vec c^\Sigma$, as will be seen below.
 
 To derive a reduced limit model, one starts by establishing a \emph{dimensionless formulation} of the general model, here the reaction-diffusion-sorption model with Fickian diffusion in the bulk and a Langmuir model on the surface. For this purpose, one introduces the following characteristic quantities:
  \begin{itemize}
   \item
    $\tau^\mathrm{R} > 0$ with unit $[\tau^\mathrm{R}] = \mathrm{s}$, a \emph{characteristic time} or \emph{accumulation time}, referring to a typical time on which significant changes in the concentration profiles in the bulk and on the surface occur;
   \item
    $L_R, L_R^\Sigma > 0$ with $[L_R] = [L_R^\Sigma] = \mathrm{m}$, \emph{characteristic lengths} in the bulk and on the surface, referring to typical lengths over which differences in the concentration profile in the bulk or on the surface can be observed;
   \item
    $D_R, D_R^\Sigma > 0$ with $[D_R] = [D_R^\Sigma] = \mathrm{m}^2 / \mathrm{s}$, \emph{characteristic diffusivities} in the bulk and on the surface, referring to typical values of the diffusion coefficients $d_i$ in the bulk and $d_{ij}^\Sigma(\vec \theta)$ on the surface;
   \item
    $c_R > 0$ with $[c_R] = \mathrm{mol} \cdot \mathrm{m}^{-3}$, a \emph{characteristic concentration} in the bulk, referring to typical values of the molar concentrations $c_i$ in the bulk;
   \item
    on the surface, the maximal capacity $c_S^\Sigma$ (here assumed to be constant) with $[c_S^\Sigma] = \mathrm{mol} \cdot \mathrm{m}^{-2}$ naturally serves as \emph{area characteristic concentration};
   \item
    $k_{R,i}^\mathrm{ad}, k_{R,i}^\mathrm{de} > 0$ with $[k_{R,i}^\mathrm{ad}] = \mathrm{m}^3 \cdot \mathrm{s}^{-1} \cdot \mathrm{mol}^{-1}$ and $[k_R^\mathrm{de}] = \mathrm{s}^{-1}$, \emph{characteristic adsorption} and \emph{desorption parameters}, referring to typical values of the reaction coefficients $k_i^\mathrm{ad}$ and $k_i^\mathrm{de}$, respectively;
   \item
    $k_{R,a}^f, k_{R,a}^b, k_{R,a}^{\Sigma,f}, k_{R,a}^{\Sigma, b}$, \emph{characteristic reaction constants} for forward and backward chemical reactions in the bulk and on the surface, respectively.
  \end{itemize}

 With these characteristic parameters at hand, one may define the following dimensionless variables and parameters
  \begin{itemize}
   \item
    $t^\ast = t / \tau^\mathrm{R}$;
    \vspace{0.1cm}
   \item
    $\vec z^\ast = \vec z / L_R$, \, $\vec z^{\Sigma, \ast} = \vec z / L_R^\Sigma$;
    \vspace{0.1cm}
   \item
    $c_i^\ast = c_i / c_R$ and $\vec c^\ast = (c_i^\ast)_{i=1}^N$;
    \vspace{0.1cm}
   \item
    $\theta_i = c_i^\Sigma / c_S^\Sigma$ and $\vec \theta = (\theta_i)_{i=0}^N$;
    \vspace{0.1cm}
   \item
    $d_i^\ast = d_i / D_R$, and $d_{ij}^{\Sigma,\ast} = d_{ij}^\Sigma / D_R^\Sigma$;
    \vspace{0.1cm}
   \item
    $k_i^{\mathrm{ad},\ast} = k_i^\mathrm{ad} / k_{R,i}^\mathrm{ad}$ and $k_i^{\mathrm{de},\ast} = k_i^\mathrm{de} / k_{R,i}^\mathrm{de}$;
    \vspace{0.1cm}
   \item
    $k_a^{f,\ast} = k_a^f / k_{R,a}^f$, $k_a^{b,\ast} = k_a^b / k_{R,a}^b$ and $k_a^{\Sigma, f, \ast} = k_a^{\Sigma, f} / k_{R,a}^{\Sigma}$, $k_a^{\Sigma, b, \ast} = k_a^{\Sigma, b} / k_{R,a}^{\Sigma}$.
    \vspace{0.1cm}
  \end{itemize}
 With a slight abuse of notation, one may also write $c_i^\ast(t^\ast, \vec z^\ast) := c_i^\ast(\tau^\mathrm{R} t^\ast, L_R \vec z^\ast)$ etc., and $\partial_{t^\ast} = \tau^\mathrm{R} \partial_t$, $\nabla^\ast = \frac{1}{L_R} \nabla$, $\Delta^\ast = \frac{1}{L_R^2} \Delta$, $\nabla_\Sigma^\ast = \frac{1}{L_R^\Sigma} \nabla_\Sigma$, and $\Delta_\Sigma^\ast = \frac{1}{(L_R^\Sigma)^2} \Delta_\Sigma$, to get the following dimensionless version of the reaction-diffusion-system with Fickian diffusion in the bulk and Fick-Onsager diffusion on the surface:
  \begin{align*}
   \frac{c_R}{\tau^\mathrm{R}} \partial_{t^\ast} c_i^\ast - \frac{D_R c_R}{L_R^2} d_i^\ast \Delta^\ast c_i^\ast
    &= \sum_{a} \nu_i^a \left( k_{R,a}^f c_R^{\abs{\vec \alpha^a}} k_a^{f,\ast} (\vec c^\ast)^{\vec \alpha^a} - k_{R,a}^b c_R^{\abs{\vec \beta^a}} k_a^{b,\ast} (\vec c^\ast)^{\vec \beta^a} \right)
    &&\text{in } (0, \infty) \times \Omega,
    \\
   \frac{1}{\tau^\mathrm{R}} \partial_{t^\ast} \theta_i - \frac{D_R^\Sigma}{(L_R^\Sigma)^2} \dv_\Sigma^\ast ( \sum_{j=0}^N d_{ij}^{\Sigma,\ast} \nabla_\Sigma^\ast \theta_j )
    &= \sum_{a} \nu_i^{\Sigma,a} (k_{R,a}^{\Sigma,f} k_a^{\Sigma,f, \ast} \vec \theta^{\vec \alpha^{\Sigma,a}} - k_{R,a}^{\Sigma,b} k_a^{\Sigma, b, \ast} \vec \theta^{\vec \beta^{\Sigma,a}})
     \\ &\qquad
     + (k_{R,i}^\mathrm{ad} c_R k_i^{\mathrm{ad}, \ast} c_i^\ast \theta_0 - k_{R,i}^\mathrm{de} k_i^{\mathrm{de}, \ast} \theta_i)
     &&\text{on } (0,\infty) \times \Sigma,
     \\
    - \frac{D_R c_R}{L_R c_S^\Sigma} \partial_{\vec n^\ast} c_i^\ast
     &= (k_{R,i}^\mathrm{ad} c_R k_i^{\mathrm{ad}, \ast} c_i^\ast c_0^\Sigma - k_{R,i}^\mathrm{ad} k_i^{\mathrm{de}, \ast} \theta_i)
     &&\text{on } (0, \infty) \times \Sigma.
  \end{align*}
 Here, the standard notation $\abs{\vec \gamma} = \sum_{i=1}^N \gamma_i$ for vectors $\vec \gamma = (\gamma_1, \ldots, \gamma_N)^\mathsf{T} \in \N_0^N$ is employed.
 We now introduce \emph{characteristic times} for the several sub-processes as follows:
  \begin{itemize}
   \item
    $\tau^\mathrm{R}$ (characteristic accumulation time),
    \vspace{0.1cm}
   \item
    $\tau_a^{\mathrm{react},f} = \frac{1}{k_{R,a}^f c_R^{\abs{\vec \alpha^a}-1}}$ and $\tau_a^{\mathrm{react},b} = \frac{1}{k_{R,a}^b c_R^{\abs{\vec \beta^a}-1}}$ (characteristic times for forward resp.\ backward chemical reactions in the bulk),
    \vspace{0.1cm}
   \item
    $\tau_a^{\mathrm{react},\vec \theta, f} = \frac{1}{k_{R,a}^{\Sigma,f}}$ and $\tau_a^{\mathrm{react},\vec \theta, b} = \frac{1}{k_{R,a}^{\Sigma,b}}$ (characteristic times for forward resp.\ backward chemical reactions on the surface),
    \vspace{0.1cm}
   \item
    $\tau^\mathrm{diff} = \frac{L_R^2}{D_R}$ (characteristic time for the bulk diffusion)
    \vspace{0.1cm}
   \item
    $\tau^{\Sigma, \mathrm{diff}} = \frac{(L_R^\Sigma)^2}{D_R^\Sigma}$ (characteristic time for the surface diffusion)
    \vspace{0.1cm}
   \item
    $\tau_i^\mathrm{ad} = \frac{1}{k_{R,i}^\mathrm{ad} c_R}$ and  $\tau_i^\mathrm{de} = \frac{1}{k_{R,i}^\mathrm{de}}$  (characteristic times for adsorption and desorption rates)
    \vspace{0.1cm}
   \item
    $\tau^\mathrm{trans} = \frac{L_R c_S^\Sigma}{D_R c_R} = \frac{c_S^\Sigma}{L_R c_R} \tau^\mathrm{diff}$ (characteristic time for transmission between bulk and surface)
    \vspace{0.1cm}
  \end{itemize}
 This leads to the formulation
  \begin{align*}
   \frac{1}{\tau_R} \partial_{t^\ast} c_i^\ast - \frac{1}{\tau^\mathrm{diff}} d_i^\ast \Delta^\ast c_i^\ast
    &= \sum_{a} \nu_i^a \left( \frac{1}{\tau_a^{\mathrm{react},f}} k_a^{f,\ast} (\vec c^\ast)^{\vec \alpha^a} - \frac{1}{\tau_a^{\mathrm{react},b}} k_a^{b,\ast} (\vec c^\ast)^{\vec \beta^a} \right)
    &&\text{in } (0, \infty) \times \Omega,
    \\
   \frac{1}{\tau_R} \partial_{t^\ast} \theta_i - \frac{1}{\tau^{\mathrm{diff},\Sigma}} \dv_\Sigma^\ast ( \sum_{j=0}^N d_{ij}^{\Sigma,\ast} \nabla_\Sigma^\ast \theta_j )
    &= \sum_{a} \nu_i^{\Sigma,a} \left( \frac{1}{\tau_a^{\mathrm{react},\vec \theta,f}} k_a^{\Sigma,f, \ast} \vec \theta^{\vec \alpha^{\Sigma,a}} - \frac{1}{\tau_a^{\mathrm{react},\vec \theta,b}} k_a^{\Sigma, b, \ast} \vec \theta^{\vec \beta^{\Sigma,a}} \right)
     \\ &\qquad
     + \left( \frac{1}{\tau_i^\mathrm{ad}} k_i^{\mathrm{ad}, \ast} c_i^\ast \theta_0 - \frac{1}{\tau_i^\mathrm{de}} k_i^{\mathrm{de}, \ast} \theta_i \right)
     &&\text{on } (0, \infty) \times \Sigma,
     \\
    - \frac{1}{\tau^\mathrm{trans}} d_i^\ast \partial_{\vec n^\ast} c_i^\ast
     &= \frac{1}{\tau_i^{\mathrm{ad}}} k_i^{\mathrm{ad}, \ast} c_i^\ast \theta_0 - \frac{1}{\tau_i^{\mathrm{de}}} k_i^{\mathrm{de}, \ast} \theta_i
     &&\text{on } (0, \infty) \times \Sigma.
  \end{align*}
  To prepare the dimensional analysis of the model, denote by
   \begin{align*}
    \tau^\mathrm{react}
     := \tau^\mathrm{react}_\mathrm{slow}
     &= \max \{ \tau_a^{\mathrm{react},f}, \tau_a^{\mathrm{react},b}: \, a = 1, \ldots, m \},
     \\
    \tau^\mathrm{react}_\mathrm{fast}
     &= \min \{ \tau_a^{\mathrm{react},f}, \tau_a^{\mathrm{react},b}: \, a = 1, \ldots, m \}
   \end{align*}
  the characteristic values for the slowest and fastest bulk reaction.
  Analogously, define $\tau^{\mathrm{react},\Sigma} = \tau^{\mathrm{react},\Sigma}_\mathrm{slow} \geq \tau^{\mathrm{react}, \Sigma}_\mathrm{fast} > 0$ and $\tau^\mathrm{sorp} = \tau^\mathrm{sorp}_\mathrm{slow} \geq \tau^\mathrm{sorp}_\mathrm{fast} > 0$. Further, with
   \begin{align*}
    \lambda_a^f
     &= \frac{\tau^\mathrm{react}_\mathrm{slow}}{\tau_a^{\mathrm{react},f}},
     \quad
    \lambda_a^b
     = \frac{\tau^\mathrm{react}_\mathrm{slow}}{\tau_a^{\mathrm{react},b}},
     \quad
    \lambda_a^{\Sigma,f}
     = \frac{\tau^\mathrm{react,\Sigma}_\mathrm{slow}}{\tau_a^{\mathrm{react,\Sigma},f}},
     \\
    \lambda_a^{\Sigma,b}
     &= \frac{\tau^\mathrm{react, \Sigma}_\mathrm{slow}}{\tau_a^{\mathrm{react}, \Sigma,b}},
     \quad
    \lambda_i^\mathrm{ad}
     = \frac{\tau^\mathrm{sorp}_\mathrm{slow}}{\tau_i^\mathrm{ad}},
     \quad
    \lambda_i^\mathrm{de}
     = \frac{\tau^\mathrm{sorp}_\mathrm{slow}}{\tau_i^\mathrm{de}}
     \in [1, \infty)
   \end{align*}
  we may now introduce the following short-hand notation:
   \begin{align*}
    \vec r^\ast(\vec c^\ast)
     &= \sum_{a} \nu_i^a \big( \lambda_a^f k_a^{f,\ast} (\vec c^\ast)^{\vec \alpha^a} - \lambda_a^b k_a^{b,\ast} (\vec c^\ast)^{\vec \beta^a} \big),
     \\
    \vec r^{\Sigma,\ast}(\vec \theta)
     &= \sum_{a} \nu_i^{\Sigma,a} \big( \lambda_a^{\Sigma,f} k_a^{\Sigma,f, \ast} \vec \theta^{\vec \alpha^{\Sigma,a}} - \lambda_a^{\Sigma,b} k_a^{\Sigma, b, \ast} \vec \theta^{\vec \beta^{\Sigma,a}} \big),
     \\
   \vec s^{\Sigma,\ast}(\vec c^\ast, \vec \theta)
    &= \lambda_i^\mathrm{ad} k_i^{\mathrm{ad}, \ast} c_i^\ast \theta_0 - \lambda_i^\mathrm{de} k_i^{\mathrm{de}, \ast} \theta_i.
   \end{align*}
 
 \begin{remark}
  The parameters $\lambda_a^f$, $\lambda_a^{\Sigma,f}$, $\lambda_i^\mathrm{ad}$ may, in general, be very small, corresponding to chemical reactions in the bulk taking place on very dissimilar time scales.
  In that case an additional limit process is possible, which may lead, e.g.\ to some reversible reactions being replaced by an irreversible reaction, or to the negligence of slow reactions compared to faster reactions. Since the focus of this manuscript, however, lies on limit models for fast sorption and fast surface diffusion, we consider only the simpler case, where, e.g.\ $\tau^\mathrm{react,\Sigma}_\mathrm{slow} \ll \tau^\mathrm{sorp}_\mathrm{fast}$.
  I.e.\ we assume that even the slowest surface chemical reactions are faster than the fastest sorption processes.
 \end{remark}
 
 With this notation at hand, the full bulk-surface reaction-diffusion-sorption model can be written in the following condensed dimensionless form.

  \begin{align}
   \frac{1}{\tau_R} \partial_{t^\ast} c_i^\ast - \frac{1}{\tau^\mathrm{diff}} d_i^\ast \Delta^\ast c_i^\ast
    &= \frac{1}{\tau^\mathrm{react}} r_i^\ast(\vec c^\ast)
    &&\text{in } (0, \infty) \times \Omega,
    \label{eqn:dimless_bulk}
    \\
   \frac{1}{\tau_R} \partial_{t^\ast} \theta_i - \frac{1}{\tau^{\mathrm{diff},\Sigma}} \dv_\Sigma^\ast ( \sum_{j=0}^N d_{ij}^{\Sigma,\ast}(\vec \theta) \nabla_\Sigma^\ast \theta_j )
    &= \frac{1}{\tau^{\mathrm{react},\Sigma}} r_i^{\Sigma,\ast}(\vec \theta)
     + \frac{1}{\tau^\mathrm{sorp}} s_i^{\Sigma,\ast}(\vec c^\ast, \vec \theta)
    &&\text{on } (0, \infty) \times \Sigma,
     \label{eqn:dimless_surface}
     \\
    - \frac{1}{\tau^\mathrm{trans}} d_i^\ast \partial_{\vec n^\ast} c_i^\ast
     &= \frac{1}{\tau^\mathrm{sorp}} s_i^{\Sigma,\ast}(\vec c^\ast, \vec \theta)
     &&\text{on } (0, \infty) \times \Sigma.
     \label{eqn:dimless_transmission}
  \end{align}
 Starting from there, several limit cases can be considered.
 
 \begin{enumerate}
  \item
   One-parameter limits, i.e.\ exactly one of the thermodynamic subprocesses (here:\ either surface chemistry, sorption or species transport to/from the surface) is assumed to be much faster than all of the other physico-chemical processes.
  \item
   Two-parameter limits, i.e.\ two thermodynamic processes are considerably faster than the others, e.g.\ fast surface chemistry and sorption.
  \item
   Three-parameter limits, i.e.\ also a third thermodynamic process is much faster than the other, remaining thermodynamic processes.
 \end{enumerate}

 \section{Limit models}

 Within this section, several limit models, each of them corresponding to different ordering of the times scales $\tau^R$, $\tau^{\mathrm{diff}, \Sigma}$, $\tau^{\mathrm{react},\Sigma}$, $\tau^\mathrm{sorp}$ and $\tau^\mathrm{trans}$  are motivated and derived.
 These constitute the most relevant cases for the dynamics on the surface and the transmission condition between bulk and surface.
 First, the cases with only one fast thermodynamic process, corresponding to one of the time scale parameters being very small compared to the others, are investigated. Thereafter, cases of two or three very fast processes are considered as well, possibly with a structural relation between some of the fast processes, e.g.\ one of them being ultra-fast, i.e.\ even being very fast compared to other fast processes.
 
 \subsection{One-parameter limits}
 Starting with the case of exactly one fast physical or chemical process, (at least mathematically) five limit cases can be distinguished:
  \begin{enumerate}
   \item
    fast surface chemistry, characterised by the condition
     \[
      \tau^{\mathrm{react},\Sigma}_\mathrm{slow} \ll \tau^\mathrm{R}, \tau^{\mathrm{diff},\Sigma}, \tau^\mathrm{sorp}_\mathrm{fast};
     \]
   \item
    fast sorption processes, characterised by the condition
     \[
      \tau^\mathrm{sorp}_\mathrm{slow} \ll \tau^\mathrm{R}, \tau^{\mathrm{diff},\Sigma}, \tau^{\mathrm{react}, \Sigma}_\mathrm{fast};
     \]
   \item
    the fast surface diffusion case, characterised by the condition
     \[
      \tau^{\mathrm{diff},\Sigma} \ll \tau^\mathrm{R}, \tau^{\mathrm{react}, \Sigma}_\mathrm{fast}, \tau^\mathrm{sorp}_\mathrm{fast};
     \]
   \item
    the fast accumulation case, characterised by the condition
     \[
      \tau^\mathrm{R} \ll \tau^{\mathrm{diff},\Sigma}, \tau^{\mathrm{react}, \Sigma}_\mathrm{fast}, \tau^\mathrm{sorp}_\mathrm{fast};
     \]
   \item
    the case of fast transmission between surface and bulk, characterised by the condition
     \[
      \tau^\mathrm{trans} \ll \tau^\mathrm{R}, \tau^{\mathrm{diff},\Sigma}, \tau^{\mathrm{react}, \Sigma}_\mathrm{fast}, \tau^{\mathrm{sorp}}_\mathrm{fast}.
     \]
  \end{enumerate}
 
 \subsubsection{Fast surface chemistry}
 First, assume that
  \[
   \tau^{\mathrm{react},\Sigma}_\mathrm{slow} \ll \tau^\mathrm{R}, \tau^{\mathrm{diff},\Sigma}, \tau^\mathrm{sorp}_\mathrm{fast}.
  \]
 This is the most typical case for heterogeneous catalysts and means that, on the surface, the chemical reactions take place much faster than all other physical-chemical processes, in particular bulk and surface diffusion, bulk chemistry and sorption at the surface.
 From a chemical engineering point of view this often is the most desirable case.
  \begin{remark}
   It is possible to consider only some of the chemical reactions on the surface as being fast, leading to fewer (nonlinear) constraints below and additional slow or moderately fast surface reaction terms in the dynamics on $\lin \{\vec e^k\}$ as defined below.
   In this case, in the fast reaction limit, only some of the chemical reactions would be assumed to be infinitely fast whereas other reactions take place on the same time scale as the remaining physical and chemical mechanisms.
  \end{remark}
 The limiting case is (formally) obtained by multiplying the evolutionary equation on the surface \eqref{eqn:dimless_surface} by $\tau^{\mathrm{react},\Sigma}$ and then letting $\tau^{\mathrm{react},\Sigma} \rightarrow 0$, leading formally to the algebraic, quasi-steady nonlinear relation
  \[
   \vec r^{\Sigma, \ast}(\vec \theta) = \vec 0,
  \]
 which due to the definition $r_0^{\Sigma,\ast}(\vec \theta) = - \sum_{i=1}^N r_i^{\Sigma,\ast}(\vec \theta)$ is equivalent to
  \[
   \vec r_\mathrm{red}^{\Sigma,\ast}(\vec \theta)
    = (r_i^{\Sigma,\ast}(\vec \theta))_{i=1,\ldots,N}
    = \vec 0.
  \]
  \begin{remark}[On the condition $\vec r^{\Sigma,\vec \ast}(\vec \theta) = \vec 0$]
   The condition $\vec r^{\Sigma, \ast}(\vec \theta) = \vec 0$ can be interpreted as follows.
    \begin{enumerate}
     \item
      First, consider the case $m^\Sigma = 1$, i.e.\ only one type of chemical reaction $\sum_{i=0}^N \alpha_i^\Sigma A_i^\Sigma \rightleftharpoons \sum_{i=0}^N \beta_i^\Sigma A_i^\Sigma$ takes place on the surface. From the modelling of the chemical reaction rates one has
      \[
       \frac{R^{\Sigma,f}}{R^{\Sigma,b}} = - \exp( \frac{1}{RT} \mathcal{A}^\Sigma )
      \]
      and $R^\Sigma = R^{\Sigma,f} - R^{\Sigma,b} = 0$ if and only if the affinity vanishes: $\mathcal{A}^\Sigma = \sum_{i=0}^N \mu_i^\Sigma \nu_i^\Sigma = 0$.
     \item
      In case of $m^\Sigma \geq 2$ types of chemical reactions $\sum_{i=0}^N \alpha_i^{\Sigma,a} A_i^\Sigma \rightleftharpoons \sum_{i=0}^N \beta_i^{\Sigma,a} A_i^\Sigma$, $a = 1, \ldots, m^\Sigma$, the condition $\vec r^{\Sigma, \ast}(\vec \theta) = \vec 0$ is, in general, \emph{not} equivalent to the condition $\mathcal{A}_a^\Sigma = 0$ for $a = 1, \ldots, m^\Sigma$. Equivalence of $\vec r^{\Sigma, \ast}(\vec \theta) = 0$ to the condition $\mathcal{A}_a^\Sigma = 0$ for $a = 1, \ldots, m^\Sigma$, holds true if and only if the stochiometric vectors $\vec \nu^{\Sigma,a}$, $a  = 1, \ldots, m^\Sigma$, are \emph{linearly independent}. More precisely, $\vec r^{\Sigma, \ast}(\vec \theta) = \vec 0$ holds true if and only if $R_a^\Sigma(\vec \theta) = R_a^{\Sigma,f}(\vec \theta) - R_a^{\Sigma,b}(\vec \theta) = w_a$ for some vector $\vec w \in \ker \big( \left[ \vec \nu^{\Sigma,1} \cdots \vec \nu^{\Sigma^{m^\Sigma}} \right]^\mathsf{T} \big) \subseteq \R^{m^\Sigma}$. Chemical equilibria for which $\vec w \neq \vec 0$ is allowed are called \emph{complex-balanced equilibria}, whereas equilibria with $\vec w = \vec 0$ (i.e.\ $\mathcal{A}_a = 0$, $a = 1, \ldots, m^\Sigma$) are called \emph{detailed-balanced equilibria}.
     \item
      In any case, $\vec r^{\Sigma,\ast}(\vec \theta) \in \lin \{\vec \nu^{\Sigma,a}: \, a = 1, \ldots, m^\Sigma\}$ and, hence, the static condition may replace the dynamic equation \eqref{eqn:dimless_surface} only on $\lin \{\vec \nu^{\Sigma,a}: \, a = 1, \ldots, m^\Sigma\}$, while on $\{\vec \nu^{\Sigma,a}: a = 1, \ldots, m^\Sigma \}^\bot$ a dynamic condition still remains in the fast surface chemistry limit.
    \end{enumerate}
  \end{remark}
  
  \begin{assumption}[Detailed-balanced equilibria]
  \label{assmpt:detailled_balance}
   Throughout this manuscript, we assume that all equilibria are detailed-balanced, i.e.\
    \[
     \vec \nu^{\Sigma,1}, \ldots, \vec \nu^{\Sigma,m^\Sigma} \quad \text{are linearly independent.}
    \]
   (As by definition $\nu_0^{\Sigma,a} = - \sum_{i=1}^N \nu_i^{\Sigma,a}$, this condition is equivalent to the statement that $\vec \nu_\mathrm{red}^{\Sigma,1}, \ldots, \vec \nu_\mathrm{red}^{\Sigma,m^\Sigma}$ with $\vec \nu_\mathrm{red}^{\Sigma,a} := (\nu_i^{\Sigma,a})_{i=1,\ldots,N} \in \R^N$ are linearly independent.)
  \end{assumption}
 For $\big( \lin \{\vec \nu_\mathrm{red}^{\Sigma,a}: a\} \big)^\bot \subseteq \R^N$ one then finds an (orthonormal) basis $\{\vec e^k: \, k = 1, \ldots, n^\Sigma\} \subseteq \R^N$. Considering the inner product of \eqref{eqn:dimless_surface} with $\vec e^k$, $k = 1, \ldots, n^\Sigma$, one obtains the evolution equations
  \[
   \frac{1}{\tau^\mathrm{R}} \vec e^k \cdot \partial_{t^\ast} \vec \theta_\mathrm{red} - \frac{1}{\tau^{\mathrm{diff},\Sigma}} \vec e^k \cdot \dv_\Sigma (\bb D^{\Sigma,\ast} \vec \nabla_\Sigma \vec \theta) = \frac{1}{\tau^\mathrm{sorp}} \vec e^k \cdot \vec s^{\Sigma,\ast}(\vec c^\ast, \vec \theta),
   \quad
   k = 1, \ldots, n^\Sigma,
  \]
 where $\bb D^{\Sigma,\ast} = (d_{ij}^{\Sigma,\ast})_{i,j} \in \R^{N \times (1+N)}$.
 The resulting reduced model is given by a coupled bulk-surface reaction-sorption-system with a nonlinear constraint on the surface occupancy numbers $\vec \theta$:
  \begin{align*}
   \frac{1}{\tau^\mathrm{R}} \partial_{t^\ast} \vec c^\ast - \frac{1}{\tau^\mathrm{diff}} \bb D^\ast \Delta^\ast \vec c^\ast
    &= \frac{1}{\tau^\mathrm{react}} r_i^\ast(\vec c^\ast),
    &&t \geq 0, \, \vec z \in \Sigma,
    \\
   \frac{1}{\tau^\mathrm{R}} \vec e^k \cdot \partial_{t^\ast} \vec \theta_\mathrm{red} - \vec e^k \cdot \dv_\Sigma (\bb D^{\Sigma,\ast} \nabla_\Sigma^\ast \vec \theta)
    &= \frac{1}{\tau^\mathrm{sorp}} \vec e^k \cdot \vec s^{\Sigma,\ast}(\vec c^\ast, \vec \theta),
    &&k = 1, \ldots, n^\Sigma, \, t \geq 0, \, \vec z \in \Sigma
    \\
   - \frac{1}{\tau^\mathrm{trans}} \bb D^\ast \partial_{\vec n^\ast} \vec c^\ast
    &= \frac{1}{\tau^\mathrm{sorp}} \vec s^{\Sigma,\ast}(\vec c^\ast, \vec \theta),
    &&t \geq 0, \, \vec z \in \Sigma,
    \\
   \vec r^{\Sigma, \ast}(\vec \theta)
    &= \vec 0,
    &&t \geq 0, \, \vec z \in \Sigma,
  \end{align*}
 using the notation $\bb D^\ast = \diag (d_i^\ast)_i \in \R^{N \times N}$.
 Returning to the variables $(\vec c, \vec c^\Sigma)$ instead of $(\vec c^\ast, \vec \theta_\mathrm{red})$ this model reads as
  \begin{align*}
   \partial_t \vec c - \bb D \Delta \vec c
    &= r_i(\vec c),
    &&t \geq 0, \, \vec z \in \Sigma,
    \\
   \vec e^k \cdot \partial_t \vec c^\Sigma - \vec e^k \cdot \dv_\Sigma(\bb D^\Sigma \nabla_\Sigma \vec c_\mathrm{ext}^\Sigma)
    &= \vec e^k \cdot \vec s^\Sigma(\vec c, \vec \theta_\mathrm{red}),
    &&k = 1, \ldots, n^\Sigma, \, t \geq 0, \, \vec z \in \Sigma
    \\
   - \bb D \partial_{\vec n} \vec c
    &= \vec s^\Sigma(\vec c, \vec \theta),
    &&t \geq 0, \, \vec z \in \Sigma,
    \\
   \vec r^{\Sigma}(\vec \theta)
    &= \vec 0,
    &&t \geq 0, \, \vec z \in \Sigma
  \end{align*}
 where $\vec c_\mathrm{ext}^\Sigma = (c_0^\Sigma, \vec c^\Sigma)^\mathsf{T}$, $\bb D = \diag (d_i)_i \in \R^{N \times N}$ and $\bb D^\Sigma = (d_{ij}^\Sigma)_{i,j} \in \R^{N \times (1+N)}$.
  \begin{remark}
   When letting $\tau^{\mathrm{react},\Sigma} \rightarrow 0$ and $\vec r^{\Sigma, \ast}(\vec \theta) \rightarrow \vec 0$ at the same time, the term $\vec r^{\Sigma}(\vec \theta) = \frac{1}{\tau^{\mathrm{react},\Sigma}} \vec r^{\Sigma, \ast}(\vec \theta)$ is not well-defined in the limit.
   On the other hand, the limiting process $\tau^{\mathrm{react},\Sigma} \rightarrow 0$ is just an idealised version of fast surface chemistry.
   Therefore, one might handle $\tau^{\mathrm{react},\Sigma} \ll 1$ and $\vec r^{\Sigma, \ast} \ll 1$ very small, but yet finite and non-zero in general, so that the reaction rate vector $\vec r^{\Sigma}(\vec \theta) = \frac{1}{\tau^{\mathrm{react},\Sigma}} \vec r^{\Sigma, \ast}(\vec \theta)$ still is well-defined.
   The dimensional analysis then just serves as a motivation, why the terms $\vec r^{\Sigma}(\vec \theta)$ appearing in the evolution equation on the surface, may be replaced by a nonlinear static relation, thus, restricting the dynamics on the surface to the subspace $\lin \{\vec e^k\}$.
  \end{remark}
 From the above reasoning it is clear how a general fast surface chemistry limit looks like:
  \begin{align*}
   \partial_t \vec c + \dv \bb J
    &= \vec r(\vec c),
    &&t \geq 0, \, \vec z \in \Sigma,
    \\
   \vec e^k \cdot \partial_t \vec c^\Sigma + \vec e^k \cdot \dv_\Sigma \bb J^\Sigma
    &= \vec e^k \cdot \vec s^\Sigma(\vec c, \vec \theta),
    &&k = 1, \ldots, n^\Sigma, \, t \geq 0, \, \vec z \in \Sigma,
    \\
   \bb J \cdot \vec n
    &= \vec s^\Sigma(\vec c, \vec \theta),
    &&t \geq 0, \, \vec z \in \Sigma,
    \\
   \vec r^{\Sigma}(\vec \theta)
    &= \vec 0,
    &&t \geq 0, \, \vec z \in \Sigma.
  \end{align*}
 
 \subsubsection{Fast sorption}
 \label{subsubsec:fast_sorption}
 Next, assume that the sorption process at the surface is considerably faster than all other physical-chemical processes, including the surface chemistry.
 I.e., for the characteristic parameters one has
   \[
    \tau^\mathrm{sorp}_\mathrm{slow} \ll \tau^\mathrm{R}, \tau^{\mathrm{diff}, \Sigma}, \tau^{\mathrm{react}, \Sigma}_\mathrm{fast}.
   \]
  The fast sorption limit is then obtained by multiplying equation \eqref{eqn:dimless_surface} by the characteristic time for the sorption processes $\tau^\mathrm{sorp} > 0$ and letting formally $\tau^\mathrm{sorp} \rightarrow 0$, leading to the quasi-static relations
   \[
    s_i^{\Sigma,\ast}(\vec c^\ast, \vec \theta)
     = 0,
     \quad
     i = 1, \ldots, N, \, t \geq 0, \, \vec z \in \Sigma.
   \]
  Since $\tau^\mathrm{sorp} \rightarrow 0$ and $s^{\Sigma,\ast}(\vec c^\ast, \vec \theta) \rightarrow 0$, the relation $- \frac{1}{\tau^\mathrm{trans}} \partial_{\vec n^\ast} c_i^\ast = \frac{1}{\tau^\mathrm{sorp}} s_i^{\Sigma,\ast}(\vec c^\ast, \vec \theta)$ cannot be used as a boundary condition in the model anymore, but $\frac{1}{\tau^\mathrm{sorp}} s_i^{\Sigma,\ast}(\vec c^\ast, \vec \theta)$ has to be replaced by $- \frac{1}{\tau^\mathrm{trans}} d_i^\ast \partial_{\vec n^\ast} c_i^\ast|_\Sigma$ in the dynamics of the occupancy numbers $\vec \theta$, so that the reduced limit model reads as
  \begin{align*}
   \frac{1}{\tau^\mathrm{R}} \partial_{t^\ast} c_i^\ast - \frac{1}{\tau^\mathrm{diff}} d_i^\ast \Delta^\ast c_i^\ast
    &= \frac{1}{\tau^\mathrm{react}} r_i^\ast(\vec c^\ast)
    &&\text{in } (0,\infty) \times \Omega,
    \\
   \frac{1}{\tau^\mathrm{R}} \partial_{t^\ast} \theta_i - \frac{1}{\tau^{\mathrm{diff},\Sigma}} \dv_\Sigma^\ast ( \sum_{j=0}^N d_{ij}^{\Sigma,\ast}(\vec \theta) \nabla_\Sigma^\ast \theta_j )
    &= \frac{1}{\tau^{\mathrm{react}, \Sigma}} r_i^{\Sigma,\ast}(\vec \theta)
     - \frac{1}{\tau^\mathrm{trans}} d_i^\ast \partial_{\vec n^\ast} c_i^\ast
    &&\text{on } (0,\infty) \times \Sigma,
     \\
    \vec s^{\Sigma,\ast}(\vec c^\ast, \vec \theta)
     &= 0,
     &&\text{on } (0, \infty) \times \Sigma.
  \end{align*}
 Returning to $\vec c$ and $\vec c^\Sigma$ instead of $\vec c^\ast$ and $\vec \theta$, this limit model reads as
  \begin{align*}
   \partial_t \vec c - \bb D \Delta \vec c
    &= \vec r(\vec c),
    &&t \geq 0, \, \vec z \in \Omega,
    \\
   \partial_t \vec c^\Sigma - \dv_\Sigma (\bb D^\Sigma(\vec c^\Sigma) \nabla_\Sigma \vec c_\mathrm{ext}^\Sigma))
    &= \vec r^\Sigma(\vec c^\Sigma) - \bb D \partial_{\vec n} \vec c,
    &&t \geq 0, \, \vec z \in \Sigma,
    \\
   \vec s^\Sigma(\vec c, \vec c^\Sigma)
    &= 0,
    &&t \geq 0, \, \vec z \in \Sigma.
  \end{align*}
 Alternatively, in the more general form for generic reaction, sorption and bulk and surface diffusion models,
  \begin{align*}
   \partial_t \vec c + \dv \bb J
    &= \vec r(\vec c),
    &&t \geq 0, \, \vec z \in \Omega
    \\
   \partial_t \vec c^\Sigma + \dv \bb J^\Sigma
    &= \vec r^\Sigma(\vec c^\Sigma) + \bb J \cdot \vec n,
    &&t \geq 0, \, \vec z \in \Sigma
    \\
   \vec s^\Sigma(\vec c, \vec c^\Sigma)
    &= 0,
    &&t \geq 0, \, \vec z \in \Sigma.
  \end{align*}
 The latter seems to be an appropriate model when considering fast sorption limits for more general reaction-diffusion-sorption models, say Maxwell-Stefan diffusion in the bulk, or more general sorption, reaction or diffusion models on the surface, e.g.\ more general models for the chemical potentials in the bulk and on the surface.
 In fact, the limit considerable simplifies the situation for generic surface chemical potentials, cf.\ the following remark.
 
 \begin{remark}[On the condition $\vec s^\Sigma(\vec c|_\Sigma, \vec c^\Sigma) = \vec 0$]
  For the fast surface chemistry limit it has been demonstrated that the nonlinear equilibrium condition $\vec r^\Sigma(\vec c^\Sigma) = \vec 0$ (under mild assumptions on the structure of surface reactions) is equivalent to $R_a(\vec c^\Sigma) = 0$ for all surface chemical reactions $a$, and the latter condition can be expressed as $\mathcal{A}_a^\Sigma = 0$, i.e., vanishing affinity for all surface chemical reactions.
  As the sorption processes at the surface are modelled analogously to a chemical reaction $A_i \rightleftharpoons A_i^\Sigma$, a similar observation can be made for the sorption equilibrium $\vec s^\Sigma(\vec c|_\Sigma, \vec c^\Sigma) = \vec 0$, namely
   \[
    \vec s^\Sigma(\vec c|_\Sigma, \vec c^\Sigma)
     = \vec 0
     \quad
     \Leftrightarrow
     \quad
    \vec \mu|_\Sigma = \vec {\tilde \mu^\Sigma}
     \text{on } \Sigma,
   \]
  where $\vec \mu = (\mu_i)_i$ and $\vec {\tilde \mu^\Sigma} = (\tilde \mu_i^\Sigma)_i = (\mu_i^\Sigma - \mu_0^\Sigma)$ are the vectors of bulk resp.\ surface chemical potentials.
 \end{remark}
  This observation is actually \emph{independent} of the particular choice for the bulk and surface chemical potentials $\mu_i$ and $\mu_i^\Sigma$, but is implied by the detailed-balance condition on the adsorption and desorption velocities $R_i^\mathrm{ad}$ and $R_i^\mathrm{de}$ for the sorption process and by the entropy production due to sorption.
 
 \subsubsection{Fast surface diffusion}
  The fast surface diffusion case is characterised by the condition
     \[
      \tau^{\mathrm{diff}, \Sigma} \ll \tau^\mathrm{R}, \tau^{\mathrm{react}, \Sigma}, \tau^\mathrm{sorp}.
     \]
   Multiplying the dynamic equation \eqref{eqn:dimless_surface} for the surface occupancy numbers by $\tau^{\mathrm{diff}, \Sigma}$ and taking the formal limit $\tau^{\mathrm{diff},\Sigma} \rightarrow 0$ leads to the constraint
    \[
     \dv_\Sigma^\ast (\sum_{j=0}^N d_{ij}^{\Sigma,\ast}(\vec \theta) \nabla_\Sigma^\ast \theta_j)
      = 0
      \quad
      \text{on } \Sigma.
    \]
   From here, in the situation of standard Fickian diffusion $d_{ij}^\Sigma = \delta_{ij} d_i^\Sigma$ for strictly positive $d_i^\Sigma > 0$, it would follow that $\nabla \vec \theta = \bb 0$, hence $\vec \theta(t, \vec z) = \vec \theta(t)$ would not depend on the spatial position $\vec z \in \Sigma$.
   For Fick-Onsager diffusion, however, this deduction is not possible, and in fact from the constraint $\sum_{i=0}^N \vec j_i^{\Sigma,\mathrm{site}} = \vec 0$ (thus $\sum_{j=0}^N d_{ij}^\Sigma = 0$), it follows that $\vec e = (1, \ldots, 1)^\mathsf{T} \in \R^{1+N}$ is a non-trivial element of $\ker (\bb D_\mathrm{ext}^{\Sigma,\ast})$, and if one demands that $d_{ij}^\Sigma \leq 0$ for $i \neq j$ and $d_{ii}^\Sigma > 0$, by Perron-Frobenius theory $\ker (\bb D_\mathrm{ext}^{\Sigma,\ast}) = \lin \{\vec e\}$.
   Thus, in that case the limit model reduces the surface evolution of $\vec \theta$ to an evolution on the constrainted space $\lin \{\vec e\}$, and by the definition $\theta_0 = 1 - \sum_{i=1}^N \theta_i$ this means that $\vec \theta(t, \vec z) = (1 - \theta_0(t, \vec z)) \frac{\vec e}{N}$ in the limit model, which then reads
    \begin{align*}
     \frac{1}{\tau^\mathrm{R}} \partial_{t^\ast} \theta_0
      &= - \frac{1}{\tau^\mathrm{R}} \sum_{i=1}^N \partial_{t^\ast} \theta_i
      = - \sum_{i=1}^N \big( \frac{1}{\tau^{\mathrm{react},\Sigma}} r_i^{\Sigma,\ast}(\vec \theta) + \frac{1}{\tau^{\mathrm{sorp}}} s_i^{\Sigma,\ast}(c_i|_\Sigma, \vec \theta) \big)
      \\      
      &= \frac{1}{\tau^{\mathrm{react},\Sigma}} r_0^{\Sigma,\ast}(\tfrac{1-\theta_0}{N} \vec e) + \frac{1}{\tau^{\mathrm{sorp}}} s_0^{\Sigma,\ast}(\vec c|_\Sigma, \tfrac{1-\theta_0}{N} \vec e)
    \end{align*}
   with $r_0^{\Sigma,\ast} := - \sum_{i=1}^N r_i^{\Sigma,\ast}$ and $s_0^{\Sigma,\ast} := - \sum_{i=1}^N s_i^{\Sigma,\ast}$.
   The dynamics on the surface then reduces to an (parameter $\vec z \in \Sigma$ dependent) ODE evolution equation for the vacancies and the limit model is
    \begin{align*}
     \frac{1}{\tau^\mathrm{R}} \partial_{t^\ast} c_i^\ast
      - \frac{1}{\tau^{\mathrm{diff},\Sigma}} d_i^\ast \Delta^\ast
      &= \frac{1}{\tau^\mathrm{react}} r_i^\ast(\vec c^\ast)
      &&\text{in } (0, \infty) \times \Omega,
      \\
     \frac{1}{\tau^\mathrm{R}} \partial_{t^\ast} \theta_0
      &= \frac{1}{\tau^{\mathrm{react},\Sigma}} r_0^{\Sigma,\ast}(\tfrac{1-\theta_0}{N} \vec e) + \frac{1}{\tau^\mathrm{sorp}} s_0^{\Sigma,\ast}(\vec c^\ast|_\Sigma, \tfrac{1-\theta_0}{N} \vec e)
      &&\text{on } (0, \infty) \times \Sigma,
      \\
     - \frac{1}{\tau^\mathrm{trans}} d_i^\ast \partial_{\vec n^\ast} c_i^\ast|_\Sigma
      &= \frac{1}{\tau^\mathrm{sorp}} s_i^{\Sigma,\ast}(c_i^\ast|_\Sigma,\frac{1-\theta_0}{N} \vec e)
      &&\text{on } (0, \infty) \times \Sigma.
    \end{align*}
   This is a PDE with local, dynamic ODE boundary conditions, and -- returning to the variables set $(\vec c, c_0^\Sigma)$ -- may be formulated as
    \begin{align*}
     \partial_t \vec c
      - \dv (\bb D \nabla \vec c)
      &= \vec r(\vec c)
      &&\text{in } (0, \infty) \times \Omega,
      \\
     \partial_t c_0^\Sigma
      &= r_0^\Sigma(\tfrac{c_S^\Sigma - c_0^\Sigma}{N} \vec e) + s_0^\Sigma(\vec c|_\Sigma, \tfrac{c_S^\Sigma - c_0^\Sigma}{N} \vec e)
      &&\text{on } (0, \infty) \times \Sigma,
      \\
     - \bb D \partial_{\vec n} \vec c|_\Sigma
      &= \vec s^\Sigma(\vec c|_\Sigma, \tfrac{c_S^\Sigma - c_0^\Sigma}{N} \vec e)
      &&\text{on } (0, \infty) \times \Sigma.
    \end{align*}
    
 \subsubsection{Fast surface accumulation}
 In the fast surface accumulation case it holds that
     \[
      \tau^\mathrm{R} \ll \tau^{\mathrm{diff},\Sigma}, \tau^{\mathrm{react},\Sigma}_\mathrm{fast}, \tau^\mathrm{sorp}_\mathrm{fast}.
     \]
    This possibly is the mathematically most delicate and challenging case, as multiplying the dimensionless formulation of the surface dynamics by $\tau^\mathrm{acc}$ and then taking the formal limit $\tau^ \mathrm{acc} \rightarrow 0$ leads to
     \[
      \partial_{t^\ast} \vec \theta
       = \vec 0.
     \]
    However, when returning to the original equation where the factor $\tau^\mathrm{acc} \partial_{t^\ast} \vec \theta$ appears, one looses \emph{any} information on the dynamic behaviour of $\vec \theta$.
    Therefore, one should rather interpret the limit $\tau^\mathrm{acc} \rightarrow 0$ as follows. For any given time $t_0 \geq 0$ one fixes the values $\vec c(t_0,\vec z)$ for $\vec z \in \Sigma$, and then considers the following system of evolution equations on $\Sigma$:
     \begin{align*}
      \partial_{t^\ast} \theta_i(t_0 + t^\ast, \vec z)
       &= \tau^\mathrm{R} \left( \frac{1}{\tau^{\mathrm{diff},\Sigma}} \dv_\Sigma^\ast (d_{ij}^{\Sigma,\ast}(\vec \theta(t_0 + t^\ast)) \nabla_\Sigma^\ast \theta_j(t_0 + t^\ast, \vec z))
        \right. \\ &\qquad \left.
        + \frac{1}{\tau^{\mathrm{react},\Sigma}} r_i^{\Sigma,\ast}(\vec \theta(t_0 + t^\ast, \vec z) + \frac{1}{\tau^\mathrm{sorp}} s_i^{\Sigma,\ast}(\vec c(t, \vec z), \vec \theta(t_0 + t^\ast, \vec z)) \right),
        \quad
        i = 0, 1, \ldots, N.
     \end{align*}
    This system of PDEs on the surface may be solved for any $\tau^\mathrm{R} > 0$, if at least it can be solved for one particular $\tau^\mathrm{R} > 0$, and in this case the corresponding solutions $u(\cdot; t_0, \tau^\mathrm{R})$ are related via
     \[
      u(t_0 + t^\ast; t_0, \alpha \tau^\mathrm{R})
       = u(t_0 + \tfrac{1}{\alpha} t^\ast; t_0, \tau^\mathrm{R}).
     \]
    Taking $\tau^\mathrm{R} \rightarrow 0$ is then equivalent to considering $\alpha \rightarrow 0$ for fixed $\tau^\mathrm{R}$. 
    One obtains
     \[
      u(t_0 + t^\ast; t_0 \alpha \tau^\mathrm{R})
       = u(t_0 + \tfrac{1}{\alpha} t^\ast; t_0, \tau^\mathrm{R})
       \rightarrow u_\infty(t_0) := \lim_{s^\ast \rightarrow \infty} u(t_0 + s^\ast; t_0, \tau^\mathrm{R})
       \quad
       \text{as } \alpha \rightarrow 0,
     \]
    if this limit exists. Hence, the fast accumulation limit can be formulated under the following premisses:
    Assume that $d_{ij}^{\Sigma,\ast}$, $\vec r^{\Sigma, \ast}$ and $\vec s^{\Sigma, \ast}$ are such that for every given $\tau^{\mathrm{diff},\Sigma}$, $\tau^{\mathrm{react}, \Sigma}$ and $\tau^\mathrm{sorp} > 0$ as well as $\vec c: \Sigma \rightarrow \R^N$ regular enough, and every $\vec \theta^0 \in \R^N$, the nonlinear Cauchy problem
     \begin{align*}
      \partial_t \theta_i(t, \vec z)
       &= \frac{1}{\tau^\mathrm{diff}} \dv_\Sigma^\ast (\sum_{j=0}^N d_{ij}^{\Sigma,\ast}(\vec \theta(t)) \nabla_\Sigma \theta_j(t, \vec z))
        \\ &\quad
        + \frac{1}{\tau^\mathrm{react}} r_i^{\Sigma,\ast}(\vec \theta(t, \vec z)) + \frac{1}{\tau^\mathrm{sorp}} s_i^{\Sigma,\ast}(\vec c(\vec z), \vec \theta(t, \vec z))
       &&\text{on } (0, \infty) \times \Sigma,
       \\
      \vec \theta(0,\vec z)
       &= \vec \theta^0(\vec z)
       &&\text{on } \Sigma
     \end{align*}
    has a unique solution $\vec \theta: \R_+ \times \Sigma \rightarrow \{\vec v \in \R^{1+N}: \, \sum_{i=0}^N v_i = 1 \} $ which for $t \rightarrow \infty$ converges to some $\mathcal{P} \vec c := \lim_{t \rightarrow \infty} \vec \theta(t; \vec c, \vec \theta^0)$ which is \emph{independent of $\vec \theta^0$}.
    Then the fast accumulation limit problem can be formulated as
     \begin{align*}
      \partial_t \vec c - \bb D \Delta \vec c
       &= \vec r(\vec c)
       &&\text{in } (0, \infty) \times \Omega,
       \\
      - \bb D \partial_{\vec n} \vec c
       &= \vec s^\Sigma(\vec c, \mathcal{P} \vec c)
       &&\text{on } (0, \infty) \times \Sigma.
     \end{align*}
    In the more general form suitable for generic diffusive flux, bulk chemistry and sorption models one obtains analogously
     \begin{align*}
      \partial_t \vec c + \dv \bb J
       &= \vec r(\vec c)
       &&\text{in } (0, \infty) \times \Omega,
       \\
      \bb J \cdot \vec n
       &= \vec s^\Sigma(\vec c, \mathcal{P} \vec c)
       &&\text{on } (0, \infty) \times \Sigma
     \end{align*}
    if, e.g.\ $\mathcal{P} \vec c$ is a (unique) global attractor for the system of PDE's
     \[
      \partial_t \theta_i(t, \vec z)
       = \frac{1}{\tau^{\mathrm{diff},\Sigma}} \dv_\Sigma^\ast (\sum_{j=0}^N d_{ij}^{\Sigma,\ast}(\vec \theta(t)) \nabla \theta_j(t, \vec z)) + \frac{1}{\tau^{\mathrm{react},\Sigma}} r_i^{\Sigma,\ast}(\vec \theta(t, \vec z)) + \frac{1}{\tau^\mathrm{sorp}} s_i^{\Sigma,\ast}(\vec c(\vec z), \vec \theta(t, \vec z))
     \]
 for $t \geq 0$, $\vec z \in \Sigma$, and $i = 0, 1, \ldots, N$, resp.\ in more abstract form
     \[
      \partial_t c^\Sigma + \dv_\Sigma \bb J^\Sigma
       = r^\Sigma(\vec c^\Sigma) + \vec s^\Sigma(\vec c, \vec c^\Sigma),
       \quad
       t \geq 0, \, \vec z \in \Sigma.
     \]
 Similar topics have been covered in the work \cite{Bot01a} of the second author in a much simpler setting; where instantaneous limit models for fast irreversible reactions have been considered.
 Note that the global attractor $\vec \theta^\infty = \mathcal{P} \vec c$ (if it exists), satisfies the steady-state condition
  \[
   \frac{1}{\tau^{\mathrm{diff},\Sigma}} \dv_\Sigma^\ast (\sum_{j=0}^N d_{ij}^{\Sigma,\ast}(\vec \theta^\infty(\vec z)) \nabla \theta_j^\infty(\vec z)) + \frac{1}{\tau^{\mathrm{react},\Sigma}} r_i^{\Sigma,\ast}(\vec \theta^\infty(\vec z)) + \frac{1}{\tau^\mathrm{sorp}} s_i^{\Sigma,\ast}(\vec c^\ast(\vec z), \vec \theta^\infty(\vec z))
    = 0,
  \] 
 for all $\vec z \in \Sigma$ and $i = 0, 1, \ldots, N$.
 Depending on whether the characteristic parameters $\tau^{\mathrm{diff},\Sigma}$, $\tau^{\mathrm{react},\Sigma}$ and $\tau^\mathrm{sorp} > 0$ are on the same time scale or not, this relation may also serve as a starting point for further reduction of the model.
 
 \subsubsection{Fast transmission between bulk phase and surface}
 \label{subsubsec:fast_transmission}

 Another limit model which can be considered formally, is the limiting case for
  \[
   \tau^\mathrm{trans} \ll \tau^\mathrm{R}, \tau^{\mathrm{diff},\Sigma}, \tau^{\mathrm{react},\Sigma}_\mathrm{fast}, \tau^\mathrm{sorp}_\mathrm{fast}.
  \] 
 However, the limit model which would result has the abstract form
  \begin{align*}
   \partial_t \vec c + \dv \bb J
    &= \vec r(\vec c),
    &&t \geq 0, \, \vec z \in \Omega,
    \\
   \partial_t \vec c^\Sigma + \dv_\Sigma \bb J^\Sigma
    &= \vec r^\Sigma(\vec c^\Sigma) + \vec s^\Sigma(\vec c, \vec c^\Sigma),
    &&t \geq 0, \, \vec z \in \Sigma,
    \\
   \bb J \cdot \vec n
    &= 0,
    &&t \geq 0, \, \vec z \in \Sigma,
  \end{align*}
 so the bulk dynamics would be completely decoupled from the surface dynamics, in the sense that the surface concentrations to not influence the reaction-diffusion system in the bulk at all.
 In particular, this limit case cannot be thermodynamically consistent, it even does not obey to mass conservation.
 \newline
 Our interpretation of this phenomenon is the following: The thermodynamic inconsistency (violation of principle of mass conservation) indicates that taking the fast transmission limit independently of other limits, is not allowed. In fact, the transmission and sorption processes are closely related, hence these sub-processes should take place on the 
 same order of magnitude, i.e.\ $\tau^\mathrm{trans} = \lambda \tau^\mathrm{sorp}$ for some parameter $\lambda > 0$. This can be seen as a motivation for the three-parameter limit considered in subsection \ref{subsec:three_parameter_limit}.
 In that sense, $\tau_\mathrm{trans}$ is \emph{not independent} of the other characteristic parameters, in particular $\tau^\mathrm{sorp}$.
 Actually, this comes without surprise since originally the sorption rates $s_i(\vec c, \vec c^\Sigma)$ are just defined as the outer normal flux $- d_i \partial_{\vec n} c_i$ for species $A_i$ at the boundary $\Sigma = \partial \Omega$.

 \subsection{Two-parameter limits}
 In the previous subsection, several one-parameter limits have been considered, each of them corresponding to a different thermodynamic subprocess which is assumed to take place very fast compared to all other subprocesses.
 Quite typically, however, not only one, but several of these thermodynamic subprocesses take are very fast.
 For this reason, the case of two-parameter limits will be investigated next, where two of the thermodynamic subprocesses are assumed to be much faster than all the other thermodynamic subprocesses.
 Here, the focus lies on the fast sorption and fast surface chemistry limit, and it will also be discussed, how -- if at all -- a hierarchy between the speeds of these two thermodynamic mechanisms does effect the resulting limit model.
 Hence, the following three limit cases will be studied:
  \begin{enumerate}
   \item
    ultra-fast sorption and fast surface chemistry, i.e.\
     \[
      \tau^\mathrm{sorp}_\mathrm{slow} \ll \tau^{\mathrm{react},\Sigma}_\mathrm{fast} \leq \tau^{\mathrm{react}, \Sigma}_\mathrm{slow} \ll \tau^\mathrm{R}, \tau^{\mathrm{diff},\Sigma}, \tau^\mathrm{trans};
     \]
   \item
    fast sorption and ultra-fast surface chemistry, i.e.\
     \[
      \tau^{\mathrm{react},\Sigma}_\mathrm{slow} \ll \tau^\mathrm{sorp}_\mathrm{fast} \leq \tau^\mathrm{sorp}_\mathrm{fast} \ll \tau^\mathrm{R}, \tau^{\mathrm{diff},\Sigma}, \tau^\mathrm{trans};
     \]
   \item
    fast sorption and equivalently fast surface chemistry, i.e.\
     \[
      \tau^{\mathrm{react},\Sigma}_\mathrm{slow} = \lambda \tau^\mathrm{sorp}_\mathrm{slow} \ll \tau^\mathrm{R}, \tau^{\mathrm{diff},\Sigma}, \tau^\mathrm{trans},
      \quad
      \text{for some fixed } \lambda > 0.
     \]
  \end{enumerate}
 
 \subsubsection{Fast surface chemistry, ultra-fast sorption}
 In this model one first takes the formal limit $\tau^\mathrm{sorp} \rightarrow 0$, and in the resulting fast sorption limit model, i.e.\
  \begin{align*}
   \frac{1}{\tau^\mathrm{R}} \partial_{t^\ast} c_i^\ast - \frac{1}{\tau^\mathrm{diff}} d_i^\ast \Delta^\ast c_i^\ast
    &= \frac{1}{\tau^\mathrm{react}} r_i^\ast(\vec c^\ast)
    &&\text{in } (0, \infty) \times \Omega,
    \\
   \frac{1}{\tau^\mathrm{R}} \partial_{t^\ast} \theta_i - \frac{1}{\tau^{\mathrm{diff},\Sigma}} \dv_\Sigma^\ast ( \sum_{j=0}^N d_{ij}^{\Sigma,\ast}(\vec \theta^\ast) \nabla_\Sigma^\ast \theta_j )
    &= \frac{1}{\tau^{\mathrm{react},\Sigma}} r_i^{\Sigma,\ast}(\vec \theta)
     - \frac{1}{\tau^\mathrm{trans}} d_i^\ast \partial_{\vec n^\ast} c_i^\ast
    &&\text{on } (0, \infty) \times \Sigma,
    \\
   s_i^{\Sigma,\ast}(\vec c^\ast, \vec \theta)
    &= 0
    &&\text{on } (0, \infty) \times \Sigma,
  \end{align*}
 additionally considers the formal limit $\tau^{\mathrm{react},\Sigma} \rightarrow 0$.
 This reduces the evolutionary PDE for $\vec \theta$ to the quasi-static relation
  \[
   \vec r^{\Sigma, \ast}(\vec \theta)
    = \vec 0,
    \quad
    \text{on } (0, \infty) \times \Sigma
  \]
 which, as for the one-parameter fast surface chemistry limit, is a condition on the part of $\vec \theta_\mathrm{red}$ lying in the linear span of $\{\vec \nu^{\Sigma,a}: a\}$.
 On its orthogonal complement $\lin \{\vec e^k: \, k = 1, \ldots, n^\Sigma\}$, a dynamic PDE remains, so that the resulting two-parameter limit model reads as
  \begin{align*}
   \frac{1}{\tau^\mathrm{R}} \partial_{t^\ast} \vec c^\ast - \frac{1}{\tau^\mathrm{diff}} \bb D^\ast \Delta^\ast \vec c^\ast
    &= \frac{1}{\tau^\mathrm{react}} \vec r^\ast(\vec c^\ast)
    &&\text{on } (0, \infty) \times \Omega,
    \\
   \frac{1}{\tau^R} \vec e^k \cdot \partial_{t^\ast} \vec \theta_\mathrm{red} - \frac{1}{\tau^{\mathrm{diff},\Sigma}} \vec e^k \cdot \dv_\Sigma^\ast (\bb D^{\Sigma,\ast} \nabla_\Sigma^\ast \vec \theta )
    &= - \frac{1}{\tau^\mathrm{trans}} \vec e^k \cdot \bb D^{\ast} \partial_{\vec n^\ast} \vec c^\ast
    &&\text{in } (0, \infty) \times \Sigma,
     \\
    c_R k_i^{\mathrm{ad}, \ast} c_i^\ast \theta_0 - k_i^{\mathrm{de}, \ast} \theta_i
     &= 0,
    &&k = 1, \ldots, n^\Sigma, \, \text{in } (0, \infty) \times \Sigma,
     \\
    \vec r^{\Sigma, \ast}(\vec \theta)
     &= \vec 0
    &&\text{on } (0, \infty) \times \Sigma.
  \end{align*}
 This limit model will later be compared with the limit model for the other two cases.
  
 \subsubsection{Ultra-fast surface chemistry, fast sorption}
 For this situation one starts the other way round, i.e.\ with the fast surface chemistry model
  \begin{align*}
   \frac{1}{\tau^\mathrm{R}} \partial_{t^\ast} \vec c^\ast - \frac{1}{\tau^\mathrm{diff}} \bb D^\ast \Delta^\ast \vec c^\ast
    &= \frac{1}{\tau^\mathrm{react}} r_i^\ast(\vec c^\ast),
    &&t \geq 0, \, \vec z \in \Sigma,
    \\
   \frac{1}{\tau^\mathrm{R}} \vec e^k \cdot \partial_{t^\ast} \vec \theta_\mathrm{red} - \vec e^k \cdot \dv_\Sigma(\bb D^{\Sigma,\ast} \nabla_\Sigma^\ast \vec \theta)
    &= \frac{1}{\tau^\mathrm{sorp}} \vec e^k \cdot \vec s^{\Sigma,\ast}(\vec c^\ast, \vec \theta),
    &&k = 1, \ldots, n^\Sigma, \, t \geq 0, \, \vec z \in \Sigma
    \\
   - \frac{1}{\tau^\mathrm{trans}} \bb D^\ast \partial_{\vec n^\ast} \vec c^\ast
    &= \frac{1}{\tau^\mathrm{sorp}} \vec s^{\Sigma,\ast}(\vec c^\ast, \vec \theta),
    &&t \geq 0, \, \vec z \in \Sigma,
    \\
   \vec r^{\Sigma, \ast}(\vec \theta)
    &= \vec 0,
    &&t \geq 0, \, \vec z \in \Sigma.
  \end{align*}
 Inserting the third equation into the second one and multiplying the third line by $\tau^\mathrm{sorp}$, taking the formal limit $\tau^\mathrm{sorp} \rightarrow 0$ then gives
  \begin{align*}
   \frac{1}{\tau^\mathrm{R}} \partial_{t^\ast} \vec c^\ast - \frac{1}{\tau^\mathrm{diff}} \bb D^\ast \Delta^\ast \vec c^\ast
    &= \frac{1}{\tau^\mathrm{react}} \vec r^\ast(\vec c^\ast),
    &&t \geq 0, \, \vec z \in \Omega
    \\
   \frac{1}{\tau^\mathrm{R}} \vec e^k \cdot \partial_{t^\ast} \vec \theta_\mathrm{red} - \frac{1}{\tau^{\mathrm{react},\Sigma}} \vec e^k \cdot \dv_\Sigma^\ast ( \bb D^{\Sigma,\ast} \nabla_\Sigma^\ast \vec \theta )
    &= - \frac{1}{\tau^\mathrm{trans}} \vec e^k \cdot (\bb D \nabla \vec c^\ast \cdot \vec n^\ast),
    &&k = 1, \ldots, n^\Sigma, \, t \geq 0, \, \vec z \in \Sigma
     \\
    \vec r^{\Sigma, \ast}(\vec \theta)
     &= \vec 0,
     &&t \geq 0, \, \vec z \in \Sigma
     \\
    \vec s^{\Sigma, \ast}(\vec c^\ast, \vec \theta)
     &= \vec 0,
     &&t \geq 0, \, \vec z \in \Sigma.
  \end{align*}
 This is the same system as for the fast surface chemistry, ultra-fast sorption limit.
 One therefore expects the same model for equivalently fast sorption and surface chemistry as well; see the next subsection.
 
 \subsubsection{Equivalently fast surface chemistry and sorption}
 For this case, one starts with the full bulk-surface reaction-diffusion-sorption model \eqref{eqn:dimless_bulk}--\eqref{eqn:dimless_transmission}, fixing the ratio $\lambda = \frac{\tau^{\mathrm{react},\Sigma}}{\tau^\mathrm{sorp}} > 0$.
 After multiplying equations \eqref{eqn:dimless_surface} and \eqref{eqn:dimless_transmission} by $\tau^{\mathrm{react},\Sigma} = \lambda \tau^\mathrm{sorp} > 0$ and performing the formal limit $\tau^{\mathrm{react},\Sigma} = \lambda \tau^\mathrm{sorp} \rightarrow 0$, one obtains the two-parameter limit
  \begin{align*}
   \frac{1}{\tau^\mathrm{R}} \partial_{t^\ast} c_i^\ast - \frac{1}{\tau^\mathrm{diff}} d_i^\ast \Delta^\ast c_i^\ast
    &= \frac{1}{\tau^\mathrm{react}} r_i^\ast(\vec c^\ast)
    \\
   0
    &= r_i^{\Sigma,\ast}(\vec \theta) + \lambda s_i^{\Sigma,\ast}(\vec c^\ast, \vec \theta)
     \\
   0
    &= s_i^{\Sigma,\ast}(\vec c^\ast, \vec \theta).
  \end{align*}
 Here, the static relations
  \begin{align*}
   \vec r^{\Sigma, \ast}(\vec \theta) + \lambda \vec s^{\Sigma, \ast}(\vec c^\ast, \vec \theta)
    &= \vec 0,
    \\
   \vec s^{\Sigma, \ast}(\vec c^\ast, \vec \theta)
    &= \vec 0,
  \end{align*}
 are equivalent to $\vec s^{\Sigma,\ast}(\vec c^\ast, \vec \theta) = \vec r^{\Sigma, \ast}(\vec \theta) = \vec 0$, and this, therefore, leads to the same two-parameter limit system as before.
 As a result, concerning the limit model it does not matter whether the sorption or surface chemistry take place equivalently fast, or one of these processes is ultra-fast. The general form one always obtains is
  \begin{align*}
   \partial_t \vec c + \dv \bb J
    &= \vec r(\vec c),
    &&t \geq 0, \, \vec z \in \Omega
    \\
   \vec e^k \cdot \partial_t \vec c^\Sigma + \vec e^k \cdot \dv_\Sigma \bb J^\Sigma
    &= \vec e^k \cdot (\bb J \cdot \vec n),
    &&k = 1, \ldots, n^\Sigma, \, t \geq 0, \, \vec z \in \Sigma
     \\
    \vec r^\Sigma(\vec c^\Sigma)
     &= \vec 0,
     &&t \geq 0, \, \vec z \in \Sigma
     \\
    \vec s^\Sigma(\vec c, \vec c^\Sigma)
     &= \vec 0,
     &&t \geq 0, \, \vec z \in \Sigma.
  \end{align*}
 In this sense, the fast limits for the sorption and the surface chemistry are compatible.
 
 \subsubsection{Equivalent formulation of the sorption and surface chemistry equilibrium condition}
 
 In subsection \ref{subsubsec:fast_sorption} it has been noted that the sorption rates are modelled such that $s_i(\vec c, \vec c^\Sigma) = 0$ if and only if the values of the bulk and surface chemical potentials of species $A_i$ and $A_i^\Sigma$ coincide on the surface, $\mu_i^\Sigma = \mu_i|_\Sigma$. This observation may now be used to remove $\vec c^\Sigma$ from the fast-sorption--fast-surface-chemistry limit model and replace the two equilibrium conditions $\vec r^\Sigma(\vec c^\Sigma) = \vec 0$ and $\vec s^\Sigma(\vec c|_\Sigma, \vec c^\Sigma) = \vec 0$ by a single equilibrium condition $\vec r^\mathrm{b}(\vec c|_\Sigma) = \vec 0$. To this end, note that by assumption \ref{assmpt:detailled_balance} the surface chemistry only has detailed-balance equilibria and, hence, $\vec r^\Sigma(\vec c^\Sigma) = \vec 0$ if and only if $\mathcal{A}_a^\Sigma = \sum_{i=1}^N \mu_i^\Sigma \nu_i^{\Sigma,a} = 0$ for all surface chemical reactions $a = 1, \ldots, m^\Sigma$. Inserting the sorption equilibrium condition $\mu_i^\Sigma = \mu_i|_\Sigma$, this means that
  \[
   \sum_{i=1}^N \mu_i|_\Sigma \nu_i^{\Sigma,a}
    = 0,
    \quad
    a = 1, \ldots, m^\Sigma
  \]
 which is the equilibrium condition for the analogous set of chemical reactions \emph{in the bulk} (but evaluated at the boundary $\Sigma = \partial \Omega$):
  \[
   \sum_{i=1}^N \alpha_i^{\Sigma,a} A_i
    \rightleftharpoons \sum_{i=1}^N \beta_i^{\Sigma,a} A_i,
    \quad
    a = 1, \ldots, m^\Sigma.
  \]
 Denoting the reaction rates belonging to this ensemble of bulk chemical reactions by $\vec r^\mathrm{b}(\vec c)$, the equilibrium condition $\vec s^\Sigma(\vec c|_\Sigma, \vec c^\Sigma) = \vec 0 = \vec r^\Sigma(\vec c^\Sigma)$ is equivalent to the nonlinear quasi-static boundary condition $\vec r^\mathrm{b}(\vec c|_\Sigma) = \vec 0$ on $\Sigma$.
 \subsection{Three-parameter limits}
 \label{subsec:three_parameter_limit}
  One may consider the case where not only the surface chemistry and the sorption process are ultra-fast, but the transmission between bulk and surface is fast as well, i.e.\
    \[
     \tau^\mathrm{sorp}_\mathrm{slow}, \tau^{\mathrm{react},\Sigma}_\mathrm{slow} \ll \tau^\mathrm{trans} \ll \tau^\mathrm{R}, \tau^{\mathrm{diff},\Sigma}.
    \]
  For the motivation of the fast transmission case, cf.\ the one-parameter limit for fast transmission in subsection \ref{subsubsec:fast_transmission}.
  There it has been motivated why especially the relation $\tau^\mathrm{trans} \lesssim \tau^\mathrm{sorp}_\mathrm{slow} \ll \tau^\mathrm{R}, \tau^{\mathrm{diff},\Sigma}$ is very reasonable.
  As for the two-parameter limits, it is not important which of the processes is faster than the others, e.g.\ it does not matter whether the surface chemical reactions are fast or even ultra-fast.
  Therefore, it is enough to establish the model by considering the case $\tau^\mathrm{trans} \rightarrow 0$ in the fast sorption, fast surface reaction model, leading to $\vec e^k \cdot (\bb J \cdot \vec n) = \vec 0$ and the following reduced model
  \begin{align*}
   \frac{1}{\tau^\mathrm{R}} \partial_{t^\ast} \vec c^\ast - \frac{1}{\tau^\mathrm{diff}} \bb D^\ast \Delta^\ast \vec c^\ast
    &= \frac{1}{\tau^\mathrm{react}} \vec r^\ast(\vec c^\ast),
    &&t \geq 0, \, \vec z \in \Omega
    \\
   \frac{1}{\tau^\mathrm{trans}} \vec e^k \cdot \bb D^\ast \partial_{\vec n^\ast} \vec c^\ast
    &= 0,
    &&k = 1, \ldots, n^\Sigma, \, t \geq 0, \, \vec z \in \Sigma
    \\
   \vec r^{\mathrm{b}, \ast}(\vec c^\ast|_\Sigma)
    &= \vec 0,
    &&t \geq 0, \, \vec z \in \Sigma,
  \end{align*}  
 or, returning to the variable $\vec c$:
  \begin{align*}
   \partial_t \vec c - \bb D \Delta \vec c
    &= \vec r(\vec c),
    &&t \geq 0, \, \vec z \in \Omega,
    \\
   - \vec e^k \cdot \bb D \partial_{\vec n} \vec c
    &= 0,
    &&k = 1, \ldots, n^\Sigma, \, t \geq 0, \, \vec z \in \Sigma,
    \\
   \vec r^\mathrm{b}(\vec c|_\Sigma)
    &= \vec 0,
    &&t \geq 0, \, \vec z \in \Sigma,
  \end{align*}  
 so that, for general reaction-diffusion-systems, the limit model reads as
  \begin{align*}
   \partial_t \vec c + \dv \bb J
    &= \vec r(\vec c),
    &&t \geq 0, \, \vec z \in \Omega,
    \\
   \vec e^k \cdot (\bb J \cdot \vec n)
    &= \vec 0,
    &&k = 1, \ldots, n^\Sigma, \, t \geq 0, \, \vec z \in \Sigma,
     \\
    \vec r^\mathrm{b}(\vec c|_\Sigma)
     &= \vec 0,
     &&t \geq 0, \, \vec z \in \Sigma.
  \end{align*}
 As one sees, in this reduced model, the dynamic PDE for the surface concentrations $\vec c^\Sigma$ is fully replaced by quasi-static relations on $\vec c$ and $\bb J \cdot \vec n$, so that a bulk-reaction-diffusion system with nonlinear, mixed-type boundary conditions results.
 \newline
 To get an idea, how for concrete reaction models the resulting PDE system and its boundary conditions look like, consider the following simple three-component model as a prototype example:
 \begin{example}[Three component system]
  Consider a three component system with no bulk chemistry and a surface reaction mechanism of the type
   \[
    A_1^\Sigma + A_2^\Sigma \rightleftharpoons A_3^\Sigma.
   \]
  The reaction rate is modelled by
   \[
    \vec r^\Sigma(\vec \theta)
     = \vec \nu^\Sigma \left( \kappa^f \theta_1 \theta_2 - \kappa^b \theta_3 \theta_0 \right),
     \quad
     \text{where }
     \vec \nu^\Sigma = (-1,-1,1)^\mathsf{T} \text{ and } \kappa^f, \kappa^b > 0.
   \]
  Suitable conservation vectors $\vec e^1$, $\vec e^2$ are given by, e.g.,
   \[
    \vec e^1 = (1,0,1)^\mathsf{T} \quad \text{and} \quad \vec e^2 = (0,1,1)^\mathsf{T}.
   \]
  Moreover, the sorption rate is modelled according to the (single-site) Langmuir model
   \[
    s_i^\Sigma(\vec c, \vec \theta)
     = k_i^\mathrm{ad} c_i \theta_0 - k_i^\mathrm{de} \theta_i,
     \quad
     i = 1, \ldots, 3.
   \]
  For Fickian diffusion in the bulk, the fast sorption, fast surface reaction, fast bulk-surface transport model
  \begin{align*}
   \partial_t \vec c + \dv \bb J
    &= \vec r(\vec c),
    &&t \geq 0, \, \vec z \in \Omega,
    \\
   \vec e^k \cdot (\bb J \cdot \vec n)
    &= \vec 0,
    &&k = 1, 2, \, t \geq 0, \, \vec z \in \Sigma,
     \\
    \vec r^\mathrm{b}(\vec c)
     &= \vec 0,
     &&t \geq 0, \, \vec z \in \Sigma
  \end{align*}
  then takes the particular form
  \begin{align*}
   \partial_t c_i - d_i \Delta c_i
    &= 0,
    &&i = 1, 2, 3, \, t \geq 0, \, \vec z \in \Omega,
    \\
   - (d_k \partial_{\vec n} c_k + d_3 \partial_{\vec n} c_3)
    &= 0,
    &&k = 1, 2, \, t \geq 0, \, \vec z \in \Sigma,
     \\
    k^f \theta_1 \theta_2 - k^b \theta_0 \theta_3
     &= 0,
     &&t \geq 0, \, \vec z \in \Sigma,
     \\
    k_i^\mathrm{ad} c_i \theta_0 - k_i^\mathrm{de} \theta_i
     &= 0,
     &&i = 1,2,3, \, t \geq 0, \, \vec z \in \Sigma,
  \end{align*}
 or, after solving the latter condition for $\vec \theta_\mathrm{red}$ and inserting into the third relation:
  \begin{align*}
   \partial_t c_i - d_i \Delta c_i
    &= 0,
    &&i = 1, 2, 3, \, t \geq 0, \, \vec z \in \Omega,
    \\
   - (d_k \partial_{\vec n} c_k + d_3 \partial_{\vec n} c_3)
    &= 0,
    &&k = 1, 2, \, t \geq 0, \, \vec z \in \Sigma,
     \\
    c_1 c_2 - \kappa c_3
     &= 0,
     &&t \geq 0, \, \vec z \in \Sigma
  \end{align*}
 with $\kappa = \frac{\kappa^b k_1^\mathrm{de} k_2^\mathrm{de} k_3^\mathrm{ad}}{\kappa^f k_1^\mathrm{ad} k_2^\mathrm{ad} k_3^\mathrm{de}} > 0$.
 \end{example}

 \section{Mathematical analysis of a prototype model problem}
 
 This section is devoted to the three-component model problem introduced above, which serves as a first example for those systems which could arise from the fast-sorption--fast-surface-chemistry limit.
 The results on local-in-time well-posedness, positivity of solutions, blow-up criteria, and a-priori bounds will be extended to more general reaction-diffusion-sorption systems in the forthcoming paper \cite{AuBo19+2}.
 
 \begin{model}[Three component model with chemical reactions on the surface]
  Let $N = 3$ and assume that $\vec r \equiv \vec 0$, i.e.\ no reactions occur within the bulk phase, whereas on the surface the following reversible chemical reaction takes place:
   \[
    A_1^\Sigma + A_2^\Sigma
     \rightleftharpoons A_3^\Sigma.
   \]
  In the bulk phase, diffusion is modelled by standard Fickian diffusion, leading to
   \[
    \partial_t c_i - d_i \Delta c_i
     = 0,
     \quad
     i=1, 2, 3, \, t \geq 0, \, \vec z \in \Omega
   \]
  with constant diffusivities $d_i > 0$, $i = 1, 2, 3$.
  On the surface, a fast sorption and fast reaction model with fast transport between bulk and surface is assumed, i.e.\
   \begin{align*}
    \kappa^f \frac{k_1^\mathrm{ad} k_2^\mathrm{ad}}{k_1^\mathrm{de} k_2^\mathrm{de}} c_1 c_2 - \kappa^b \frac{k_3^\mathrm{ad}}{k_3^\mathrm{de}} c_3
     &= 0,
     &&t \geq 0, \, \vec z \in \Sigma,
     \\
    - \vec e^k \cdot (\bb D \partial_{\vec n} \vec c)
     &= 0,
     &&k = 1, 2, \, t \geq 0, \, \vec z \in \Sigma
   \end{align*}
  where $\vec e^1 = (1,0,1)^\mathsf{T}$ and $\vec e^2 = (0,1,1)^\mathsf{T}$ are the conservation vectors under the chemical reaction on the surface.
  This reaction has the stochiometric vector $\vec \nu^{\Sigma,1} = (-1,-1,1)^\mathsf{T} \in \R^3$, and $\bb D = \operatorname{diag} (d_1, d_2, d_3) \in \R^{3 \times 3}$, so that, for $\kappa := \frac{\kappa^b k_1^\mathrm{de} k_2^\mathrm{de} k_3^\mathrm{ad}}{\kappa^f k_1^\mathrm{ad} k_2^\mathrm{ad} k_3^\mathrm{de}} > 0$, the condensed form of the limit model reads as
   \begin{align*}
    \partial_t c_i - d_i \Delta c_i
     &= 0,
     &&i=1, 2, 3, \, t \geq 0, \, \vec z \in \Omega,
     \\
    c_1 c_2 - \kappa c_3
     &= 0,
     &&t \geq 0, \, \vec z \in \Sigma,
     \\   
    - d_1 \partial_{\vec n} c_1 - d_3 \partial_{\vec n} c_3      
     &= 0,
     &&t \geq 0, \, \vec z \in \Sigma,
     \tag{MP}
     \label{MP}
     \\   
    - d_2 \partial_{\vec n} c_2 - d_3 \partial_{\vec n} c_3      
     &= 0,
     &&t \geq 0, \, \vec z \in \Sigma.
   \end{align*}
 \end{model}
  This prototype problem will the investigated here, where the first results are:
   \begin{enumerate}
    \item
     Local-in-time existence and uniqueness of strong solutions to \eqref{MP} in the class $\vec c \in \WW^{(1,2),p}(J \times \Omega; \R^3)$, where $J = [0,T]$ for some $T > 0$, and $p > \frac{n + 2}{2}$, can be ensured, provided the initial data satisfy certain regularity and compatibility conditions. In this case, the solution depends continuously on the initial data $\vec u_0 \in \BB_{pp}^{2-2/p}(\Omega;\R^3)$.
    \item
     As a by-product, the construction of the strong solution provides an abstract blow-up criterion for global in time strong solutions.
    \item
     Moreover, for positive initial data, the solution stays positive and immediately becomes strictly positive for non-negative, but non-zero initial data.
    \item
     Lastly, some a-priori bounds of type $\LL_\infty \LL_1$, $\LL_1 \LL_\infty$, $\LL_2 \LL_2$ and an entropy dissipation inequality hold true.
   \end{enumerate}
  For the notation used here, cf.\ the next subsection on notation.

 \subsection{Some comments on existing literature on analysis of reaction-diffusion systems}
 
 Reaction-diffusion systems constitute a highly relevant and interesting research topic in mathematical modelling, mathematical analysis and numerical simulation of systems relevant for chemical engineering. Unsurprisingly, there is abundant literature on several aspects of reaction-diffusion systems. Within this subsection, a short overview on some related results will be given. Obviously, such an overview can not be exhaustive by any means, and clearly the selection of cited papers may be biased to some extend.
 Here, the selection is partly based on the references given in the excellent survey by M.~Pierre \cite{Pie10}.\newline
 Without doubt, the monograph \cite{LaSoUr68} of O.A.~Ladyzenskaya, V.A.~Solonnikov and N.N.~Ural'ceva has to be included in this list, as it marks maybe the most important cornerstone for the modern theory on parabolic systems, providing  $\LL_p$-maximal regularity and optimality for a very rich class of parabolic evolution equations. More specific on the topic of reaction diffusion equations are the by now classical books by D.~Henry \cite{Hen81} and F.~Rothe \cite{Rot84}. M.~Pierre contributed a vast amount to the literature, e.g.\ \cite{MarPie91}, \cite{Pie03}, \cite{PieSch97}, on the analysis of reaction diffusion systems, with emphasis on global existence or blow-up phenomena. Somehow related to the dimensional analysis considered here are the papers \cite{BotPie10} and \cite{BotPie12} by the second author and M.~Pierre, where fast-reaction-limits for chemical reactions taking place on different time scales have been considered. All these references, however, have one thing in common: They all treat bulk reaction-diffusion-systems with given boundary conditions on the surface of the chemical reactor, e.g.\ homogeneous and non-homogeneous Dirichlet boundary conditions, Neumann boundary conditions or Robin boundary conditions.\newline
 Over the years, the abstract mathematical tools for analysing parabolic systems have considerably improved, e.g.\ with the theory of $\mathcal{R}$-boundedness by E.~Berkson and T.A.~Gillespie \cite{BerGil94}, P.~Cl\'ement et al.\ \cite{{CleDePSukWit00}}, N.J.~Kalton and L.~Weis \cite{KalWei01}.
 For a large class of problems applicable criteria for $\LL_p$-maximal regularity have been established by R.~Denk, M.~Hieber and the late J.~Pr\"uss \cite{DeHiPr03}, \cite{DeHiPr07} and generalised to abstract bulk-surface type systems by R.~Denk, J.~Pr\"uss and R.~Zacher \cite{DePrZa08}. The latter haven been used recently by R.~Schnaubelt to give abstract results on well-posedness and asymptotic behaviour of semilinear bulk-surface reaction-diffusion systems \cite{Sn17}. D.~Bothe, M.~K\"ohne, S.~Maier and J.~Saal \cite{BoKoMaSa17} considered a bulk-surface reaction-diffusion system with inflow and and outflow on a cylindrical domain, proving well-posedness and global existence of strong solutions. \newline
Note that another direction of generalisation of above results are quasilinear instead of semilinear evolution equation. As there is experimental evidence \cite{DuTo62} that the standard Fickian diffusion model is not appropriate to describe situations of non-dilute species in a fluid mixture, and continuum-thermodynamic considerations show that these models are not thermodynamically consistent as they do not obey to the entropy principle \cite{BoDr15}, quite recently alternative models for diffusion of fluids have become more and more popular also in the mathematical community, in particular the \emph{Maxwell-Stefan approach}, see e.g.\ \cite{Bo11}, \cite{HeMePrWi17}, \cite{SoOrMaBo19}.
Furthermore, we refer to the article \cite{BotRol15} of the second author on global existence for reaction-diffusion systems with anisotropic diffusion, i.e.\ a diffusion matrix of the form $D_i = D_i(t, \vec z, \vec c)$.

 \subsection{Some notation and preliminaries}
 Throughout, $\Omega \subseteq \R^n$ denotes a bounded domain with compact boundary $\Sigma = \partial \Omega$ of class $\CC^2$, at least.
 The space $\CC(\overline{\Omega})$ denotes the space of continuous functions and, given $k \in \N_0$ and $\alpha \in (0,1]$, $\CC^k(\overline{\Omega})$ and $\CC^{k+\alpha}(\overline{\Omega})$ are the spaces of $k$-times continuously differentiable functions, respectively of $k$-times continuously differentiable functions with derivatives of order $k$ in the H\"older space $\CC^\alpha(\overline{\Omega})$.
 Moreover, given $p \in [1,\infty]$ and $\Omega \subseteq \R^n$ a (not necessarily bounded) Lebesgue measurable set, the Lebesgue spaces of function classes of Lebesgue measurable functions $f$ such that $\int_\Omega \abs{f}^p \dd \vec z < \infty$ is $\LL_p(\Omega)$, and as usual a function $f$ is identified with its equivalence class $[f] \in \LL_p(\Omega)$ of measurable functions which coincide a.e.\ with $f$. For $k \in \N_0$ or $s \in \R_+$, the symbols $\WW^k_p(\Omega)$ and $\WW^s_p(\Omega)$ denote the Sobolev spaces and Sobolev--Slobodetskii spaces of orders $k$ and $s$, respectively, and Besov spaces $\BB_{pq}^s(\Omega)$ will only be considered for the case $p = q$, noting that $\BB_{pp}^s(\Omega) = \WW^s_p(\Omega)$ for $s \in \R_+ \setminus \N_0$, but $\BB_{pp}^k(\Omega) \neq \WW^k_p(\Omega)$ for $k \in \N_0$.
 All these spaces are equipped with their standard Banach space norms $\norm{\cdot}_\infty$, $\norm{\cdot}_{\CC^k}$, $\norm{\cdot}_{\LL_p}$, $\norm{\cdot}_{\WW^k_p}$ etc., and for sufficiently regular boundary there also exist their surface versions $\CC(\Sigma)$, $\CC^{k + \alpha}(\Sigma)$, $\LL_p(\Sigma)$, $\WW^k_p(\Sigma)$ etc., as well as their Banach space $E$-valued versions, e.g.\ $\LL_p(\Omega; E)$, which are Banach spaces as well.
 As parabolic evolution equations of second order are being considered, also the anisotropic function spaces
  \begin{align*}
   \CC^{(1,2) \cdot (k + \alpha)}(\overline{J} \times \overline{\Omega})
    &= \CC^{k+\alpha, 2(k+\alpha)}(\overline{J} \times \overline{\Omega})
    := \CC^{k+\alpha}(\overline{J}; \CC(\overline{\Omega})) \cap \CC(\overline{J}; \CC^{2(k+\alpha)}(\overline{\Omega})),
    \\
   \WW_p^{(1,2) \cdot s}(J \times \Omega)
    &= \WW_p^{(s,2s)}(J \times \Omega)
    = \WW_p^s(J; \LL_p(\Omega)) \cap \LL_p(J; \WW_p^{2s}(\Omega))
  \end{align*}
 for intervals $J \subseteq \R$, which are Banach spaces for their respective norms $\norm{\cdot}_{\CC^{(1,2) \cdot (k+\alpha)}}$ and $\norm{\cdot}_{\WW_p^{(1,2) \cdot s)}}$, will be used.

 \subsection{Local-in-time existence of strong solutions for the model problem}
 
 Local-in-time existence of strong solutions can be established via the contraction mapping principle.
 
 \begin{theorem}[Local-in-time existence of strong solutions]
  \label{thm:l-it_existence}
  Let $p > \frac{n + 2}{2}$ and assume that $\Omega \subseteq \R^n$ is a bounded domain of class $\partial \Omega \in \CC^2$. Then the model problem \eqref{MP} admits a unique local-in-time strong solution which continuously depends on the initial datum $\vec c^0 \in \BB_{pp}^{2-2/p}(\Omega)$ if and only if
   \[
    \vec c^0
     \in \II_p(\Omega)
      := \{ \vec c^0 \in \BB_{pp}^{2-2/p}(\Omega; \R_+^3): \,
      \kappa c_1^0 c_2^0 - c_3^0 = 0, \,
      d_i \partial_{\vec n} c_i^0 + d_3 \partial_{\vec n} c_3^0 = 0 \, (i = 1,2) \text{ a.e.\  on } \Sigma \, \}.
   \]
  More precisely, for every $\vec c_0^\ast \in \II_p(\Omega)$, there are $T > 0$, $\varepsilon > 0$ and $C > 0$ such that
   \begin{enumerate}
    \item
     For all $\vec c_0 \in \II_p(\Omega)$ with $\norm{\vec c_0 - \vec c_0^\ast}_{\BB_{pp}^{2-2/p}(\Omega)} < \varepsilon$, there is a unique strong solution $\vec c \in \WW_p^{(1,2)}(J \times \Omega; \R^3)$ of \eqref{MP} for $J = [0,T]$,
    \item
     for any two initial values $\vec c_0, \tilde{\vec c}_0 \in \II_p(\Omega)$ with $\norm{\vec c_0 - \vec c_0^\ast}_{\BB_{pp}^{2-2/p}(\Omega)}, \norm{\tilde{\vec c}_0 - \vec c_0^\ast}_{\BB_{pp}^{2-2/p}(\Omega)} < \varepsilon$ and corresponding strong solutions $\vec c, \tilde {\vec c} \in \WW_p^{(1,2)}(J \times \Omega; \R^3)$ one has
      \[
       \norm{\vec c - \tilde {\vec c}}_{\WW_p^{(1,2)}(J \times \Omega)}
        \leq C \norm{\vec c_0 - \tilde {\vec c}_0}_{\BB_{pp}^{2-2/p}(\Omega)},
      \]
    \item
     any strong solution $\vec c \in \WW_p^{(1,2)}(J \times \Omega)$ can be extended in a unique way to a maximal strong solution $\vec c: [0, T_\mathrm{max}) \times \Omega \rightarrow \R_+^3$ with $\vec c \in \WW_p^{(1,2)}((0,T) \times \Omega; \R_+^3)$ for every $T \in (0, T_\mathrm{max})$.
    \item
     If $T_\mathrm{max} < \infty$, then $\norm{\vec c(t,\cdot)}_{\BB_{pp}^{2-2/p}(\Omega)} \rightarrow \infty$ as $t \nearrow T_\mathrm{max}$.
   \end{enumerate}
 \end{theorem}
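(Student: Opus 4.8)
The plan is to recast \eqref{MP} as a fixed-point problem and solve it by the Banach contraction principle, with the linearization handled by $\LL_p$-maximal regularity. The diffusion part is a diagonal system of heat equations, hence normally elliptic; the only genuine coupling sits in the boundary conditions, one of which -- the constraint $c_1 c_2 - \kappa c_3 = 0$ -- is nonlinear and of zeroth order, while the two flux conditions are linear and of first order. I would first fix a reference state $\bar{\vec c}$ (a maximal-regularity extension of the initial datum $\vec c^0$), linearize the constraint around its boundary trace $(r_1, r_2) := (\bar c_1, \bar c_2)|_\Sigma$, and split
\[
 c_1 c_2 - \kappa c_3 = \bigl( r_2 c_1 + r_1 c_2 - \kappa c_3 \bigr) + (c_1 - r_1)(c_2 - r_2) + \text{(known data)},
\]
so that the leading part is a \emph{linear} boundary operator and the remainder is \emph{quadratic} in the deviation $\vec v := \vec c - \bar{\vec c}$.

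The key structural step is to verify that the linear bulk--boundary operator -- the diagonal heat operators together with the boundary triple $(\BB_1, \BB_2, \BB_3)$, where $\BB_1 \vec c = r_2 c_1 + r_1 c_2 - \kappa c_3$, $\BB_2 \vec c = d_1 \partial_{\vec n} c_1 + d_3 \partial_{\vec n} c_3$ and $\BB_3 \vec c = d_2 \partial_{\vec n} c_2 + d_3 \partial_{\vec n} c_3$ -- is parameter-elliptic and satisfies the Lopatinskii--Shapiro condition, so that the theory of \cite{DeHiPr03}, \cite{DePrZa08} yields $\LL_p$-maximal regularity on $\WW_p^{(1,2)}(J \times \Omega; \R^3)$. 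Freezing coefficients and passing to the half-space model problem, the bounded solutions of $(\lambda + d_i|\xi'|^2 - d_i \partial_y^2) w_i = 0$ are $w_i(y) = A_i \ee^{-\omega_i y}$ with $\omega_i = \sqrt{(\lambda + d_i|\xi'|^2)/d_i}$, $\Re \omega_i > 0$. Evaluating the boundary symbols produces
\[
 M(\lambda,\xi')
  = \begin{pmatrix}
     r_2 & r_1 & -\kappa \\
     d_1 \omega_1 & 0 & d_3 \omega_3 \\
     0 & d_2 \omega_2 & d_3 \omega_3
    \end{pmatrix},
 \quad
 \det M = -\bigl( r_2 d_2 d_3 \omega_2 \omega_3 + r_1 d_1 d_3 \omega_1 \omega_3 + \kappa d_1 d_2 \omega_1 \omega_2 \bigr).
\]
For $\lambda$ in a sector $\abs{\arg \lambda} \leq \pi - \varepsilon$ one checks that all $\omega_i$, and hence all products $\omega_i \omega_j$, lie in a common sector of opening strictly less than $\pi$; since $r_1, r_2 \geq 0$, $\kappa > 0$ and $d_i > 0$, the three summands are non-negative multiples of vectors in that sector with the last strictly positive, so $\det M \neq 0$. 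This is exactly where the non-negativity of the reference concentrations and the strict positivity of $\kappa$ and the diffusivities enter, and it persists even in the degenerate case $r_1 = r_2 = 0$.

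With maximal regularity in hand, let $\LL^{-1}$ be the solution operator of the linear problem with boundary operator $(\BB_1,\BB_2,\BB_3)$ and choose $\bar{\vec c}$ so that $\vec v(0) = 0$ and the entire constraint defect is carried by the $\BB_1$-data; on $\Sigma$ one then has $c_i - r_i = v_i$, so the remainder reduces to the quadratic surface term $v_1 v_2$. The fixed-point map sends $\vec v$ to $\LL^{-1}$ of the data whose $\BB_1$-slot is $v_1 v_2$ and whose remaining slots vanish. Because $p > \tfrac{n+2}{2}$, one has $\WW_p^{(1,2)}(J \times \Omega) \hookrightarrow \BUC(\overline{J} \times \overline{\Omega})$ together with the matching trace embeddings into the boundary-data spaces, the stronger of which (for the Dirichlet-type slot $\BB_1$) is a Banach algebra under pointwise multiplication; hence $v_1 v_2$ is locally Lipschitz and, being quadratic, admits estimates $\norm{v_1 v_2 - \tilde v_1 \tilde v_2} \leq C(R)\,\norm{\vec v - \tilde{\vec v}}$ on balls of radius $R$. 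The smallness needed for self-mapping and contraction on a short interval $J = [0,T]$ comes from the compatibility conditions encoded in $\II_p(\Omega)$: they force the trace of the remainder to vanish at $t = 0$, so its norm in the vanishing-trace data space tends to $0$ as $T \to 0$. These conditions are also \emph{necessary}, since every $\vec c \in \WW_p^{(1,2)}$ has well-defined spatial and temporal traces that must satisfy the boundary relations down to $t = 0$; this yields the ``if and only if''. Continuous dependence (claim~(2)) follows at once from the Lipschitz contraction estimate applied to two data sets.

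For claims (3) and (4) I would continue the local solution to a maximal interval $[0, T_\mathrm{max})$ by the usual restart argument: the construction yields, at each starting time, an existence interval whose length is bounded below in terms of the $\BB_{pp}^{2-2/p}$-norm of the current state, so as long as $\sup_t \norm{\vec c(t)}_{\BB_{pp}^{2-2/p}(\Omega)}$ remains finite one can re-solve and glue, giving uniqueness and open-endedness of $[0, T_\mathrm{max})$; consequently, if $T_\mathrm{max} < \infty$ then $\norm{\vec c(t)}_{\BB_{pp}^{2-2/p}(\Omega)} \to \infty$ as $t \nearrow T_\mathrm{max}$. The main obstacle throughout is the nonlinear zeroth-order boundary constraint: it compels one to work with the mixed-order (one Dirichlet-type, two Neumann-type) boundary operator, to establish its Lopatinskii--Shapiro condition via the determinant above, and to estimate the quadratic surface nonlinearity in the \emph{stronger} of the two boundary trace spaces -- precisely the place where the critical exponent $p > \tfrac{n+2}{2}$ is indispensable.
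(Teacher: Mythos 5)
Your route is genuinely different from the one the paper takes. You linearise the full coupled system around an extension of the initial datum, treat the boundary conditions as a single mixed-order (one zeroth-order Dirichlet-type, two first-order Neumann-type) boundary operator, verify the Lopatinskii--Shapiro condition via the determinant of the $3\times 3$ boundary symbol matrix, and then contract on the quadratic remainder $v_1 v_2$. The paper instead never linearises the coupled system at all: it decomposes \eqref{MP} into three \emph{scalar} subproblems \eqref{MP-1}--\eqref{MP-3} -- two inhomogeneous Neumann problems for $c_1, c_2$ with flux data $g_1 = g_2 = d_3 \partial_{\vec n} c_3$, and one inhomogeneous Dirichlet problem for $c_3$ with data $h = \kappa^{-1}(c_1 c_2)|_\Sigma$ -- solves them successively, and runs the contraction mapping on $c_3$ alone, using only classical scalar maximal regularity plus $T$-uniform embedding constants on the zero-initial-trace subspace and the Lipschitz estimate for the product map (Lemma~\ref{lem:product}). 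Your approach is more systematic and generalises to non-diagonal (cross-)diffusion, where the paper's decomposition breaks down; the paper's approach buys a proof that needs nothing beyond the scalar heat equation with inhomogeneous Dirichlet or Neumann data. Your determinant computation is correct, and the non-vanishing argument goes through once one notes that for $\abs{\arg\lambda}\leq\pi-\varepsilon$ all three $\arg\omega_i$ have the same sign, so the products $\omega_i\omega_j$ indeed lie in a common convex sector of opening $\pi-\varepsilon$; you should make that same-sign observation explicit, since the bound $\abs{\arg(\omega_i\omega_j)}\leq\pi-\varepsilon$ alone does not place them in a sector of opening less than $\pi$.

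There is, however, one step you cannot take off the shelf as claimed: the assertion that \cite{DeHiPr03}, \cite{DePrZa08} directly yield $\LL_p$-maximal regularity for the coupled linear problem with the boundary triple $(\BB_1,\BB_2,\BB_3)$. The paper itself points out (in the subsection on generalisations) that the simultaneous imposition of Dirichlet-type and Neumann-type conditions at the same boundary point is \emph{not} covered by the theory of \cite{DeHiPr03}, \cite{DeHiPr07}, because the notion of principal boundary symbol used there discards the zeroth-order Dirichlet-type row relative to the first-order rows, after which the Lopatinskii--Shapiro condition for the principal part fails; an extension in the spirit of the Newton-polygon calculus of \cite{DeKa13} is needed and is deferred to \cite{AuBo19+2}. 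So your Lopatinskii--Shapiro verification is the right computation for the right extended theory, but invoking it as an immediate consequence of the cited references leaves a genuine gap -- precisely the gap the paper's successive-scalar-problem decomposition is designed to avoid. The remainder of your argument (quadratic smallness from the vanishing initial trace, necessity of the compatibility conditions from trace theory, and the restart argument for the maximal solution and the blow-up criterion) matches the paper's mechanism and is fine.
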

 
 \begin{proof}
  The following proof relies on a decomposition into three linear subproblems which can be solved successively, and a fixed point argument based on the contraction mapping principle.
  First, observe that problem \eqref{MP} with initial data $\vec c(0,\cdot) = \vec c^0$ is equivalent to the following system of scalar reaction-diffusion systems:
   \begin{equation}
    \begin{cases}
     \partial_t c_1 - d_1 \Delta c_1
      = 0,
      &\text{in } (0,T) \times \Omega,
      \\
     - d_1 \partial_{\vec n} c_1
      = g_1,
      &\text{on } (0,T) \times \Omega,
      \\
     c_1(0,\cdot)
      = c_1^0,
      &\text{in } \Omega,
    \end{cases}
    \tag{\ref{MP}-1}
    \label{MP-1}
   \end{equation}
   \begin{equation}
    \begin{cases}
     \partial_t c_2 - d_2 \Delta c_2
      = 0,
      &\text{in } (0,T) \times \Omega,
      \\
     - d_2 \partial_{\vec n} c_2
      = g_2,
      &\text{on } (0,T) \times \Omega,
      \\
     c_2(0,\cdot)
      = c_2^0,
      &\text{in } \Omega,
    \end{cases}
    \tag{\ref{MP}-2}
    \label{MP-2}
   \end{equation}
   \begin{equation}
    \begin{cases}
     \partial_t c_3 - d_3 \Delta c_3
      = 0,
      &\text{in } (0,T) \times \Omega,
      \\
     c_3
      =h,
      &\text{on } (0,T) \times \Omega,
      \\
     c_3(0,\cdot)
      = c_3^0,
      &\text{in } \Omega
    \end{cases}
    \tag{\ref{MP}-3}
    \label{MP-3}
   \end{equation}
 for $g_1 = g_2 = d_3 \partial_{\vec n} c_3$ and $h = \kappa^{-1} c_1 c_2$.
 Fixing $p \in (\frac{n+2}{2}, \infty)$, initial data $\vec c^0 \in \BB_{pp}^{2-2/p}(\Omega; \R^3)$  such that $c_3^0|_\Sigma = \kappa (c_1^0 c_2^0)|_\Sigma$ (if $p > \frac{3}{2}$) as well as $d_1 \partial_{\vec n} c_1^0 = d_2 \partial_{\vec n} c_2^0 = d_3 \partial_{\vec n} c_3^0$ (if $p > 3$) and positive constants $T_0, \rho_0 > 0$, one may then define for $\rho \in (0, \rho_0]$ and $T \in (0, T_0]$ the map
  \[
   \Phi: \DD_0 \subseteq \WW_p^{(1,2)}((0,T) \times \Omega) \rightarrow \WW_p^{(1,2)}((0,T) \times \Omega)
  \]
 according to
  \[
   \DD_0
    = \{ c \in \WW_p^{(1,2)}((0,T) \times \Omega): \, c(0,\cdot) = c_3^0, \, \norm{c(t,\cdot) - c_0^3}_{\BB_{pp}^{2-2/p}(\Omega)} \leq \rho \quad (\forall t \in (0, T)) \, \}
  \]
 and $\Phi(c_3) = \tilde c_3$ which is constructed in the following way.
  Let $\tilde c_1, \tilde c_2 \in \WW_p^{(1,2)}((0,T) \times \Omega)$ be the (unique) solutions to the linear parabolic problems \eqref{MP-1} and \eqref{MP-2}, respectively, for the inhomogeneity $g_1 = g_2 = d_3 \partial_{\vec n} c_3$.
  Then $\tilde c_3 \in \WW_p^{(1,2)}((0,T) \times \Omega)$ is defined as the unique solution to \eqref{MP-3} for $h = \kappa^{-1} (\tilde c_1 \tilde c_2)|_\Sigma$.
  Thereby, the map $\Phi$ is well-defined, and it remains to adjust $T$ and $\rho$ such that $\Phi$ is a strictly contractive self-mapping on the closed subset $\DD_0$ of $\WW_p^{(1,2)}((0,T) \times \Omega)$.
  To this end, note that the maximal regularity constants $C_\mathrm{D}(T) > 0$ for the parabolic Dirichlet problem \eqref{MP-3} and $C_\mathrm{N}(T) > 0$ for the Neumann problems \eqref{MP-1} and \eqref{MP-2} can be chosen independently of $T \in (0,T_0)$, i.e.\ for the corresponding solutions it holds that
   \begin{align*}
    \norm{c_1}_{\WW_p^{1,2}((0,T) \times \Omega}
     &\leq C_\mathrm{N} \left( \norm{c_1^0}_{\BB_{pp}^{2-2/p}(\Omega)} + \norm{g_1}_{\WW_p^{(1,2) \cdot (1/2 - 1/(2p))}((0,T) \times \Sigma)} \right)
     \\
    \norm{c_3}_{\WW_p^{1,2}((0,T) \times \Omega}
     &\leq C_\mathrm{D} \left( \norm{c_3^0}_{\BB_{pp}^{2-2/p}(\Omega)} + \norm{h}_{\WW_p^{(1,2) \cdot (1 - 1/(2p))}((0,T) \times \Sigma)} \right),
     \quad
     T \in (0, T_0).
   \end{align*}
 Similarly, the embedding constants $C_\mathrm{emb}(T)$ and $C_\mathrm{emb}'(T)$ for the embeddings
  \begin{align*}
   \norm{c|_\Sigma}_{\WW_p^{(1,2) \cdot (1 - 1/(2p))}((0,T) \times \Sigma)}
    &\leq C_\mathrm{emb} \norm{c}_{\WW_p^{(1,2)}((0,T) \times \Omega)},
    \\
   \norm{\partial_{\vec n} c|_\Sigma}_{\WW_p^{(1,2) \cdot (1/2 - 1/(2p))}((0,T) \times \Sigma)}
    &\leq C_\mathrm{emb}' \norm{c}_{\WW_p^{(1,2)}((0,T) \times \Omega)},
    \quad
    T \in (0,T_0),
  \end{align*}
 can be chosen independently of $T \in (0, T_0)$. Finally, the map $(c_1, c_2) \mapsto c_1 c_2$ is, for $p > \frac{n+2}{2}$, continuous as a map from $\WW_p^{(1,2)}((0,T) \times \Omega; \R^2)$ to $\WW_p^{(1,2)}((0,T) \times \Omega)$, and Lipschitz continuous on sets with $\norm{c_i(t,\cdot)}_\infty \leq r$, where the Lipschitz constant tends to zero as $T \rightarrow 0$, for $r \in (0, r_0]$ with some fixed $r_0 > 0$.

 \end{proof}
 
 \begin{lemma}
 \label{lem:product}
  Let $J \subseteq \R$ be a bounded interval and $\Omega \subseteq \R^n$ be a bounded domain with $\CC^2$-boundary.
  Then, for every $k \in \N$, the map
   \[
    M_k: \quad
    \WW^{(1,2)}_p(J \times \Omega)^k
     \rightarrow \WW^{(1,2)}_p(J \times \Omega),
     \quad
    \vec v
     \mapsto \prod_{i=1}^k v_i
   \]
  is continuous, provided $p > \frac{n + 2}{2}$.
  In this case, it is Lipschitz continuous on every bounded (in $\norm{\cdot}_\infty$-norm) subset of $\WW_p^{(1,2)}(J \times \Omega)$. Moreover, considering the closed subspace
   \[
    \DD_0 = \{ v \in \WW_p^{(1,2)}((0,T) \times \Omega): \, v|_{t=0} = 0 \}
   \]  
  of $\WW_p^{(1,2)}(J \times \Omega)$ and the Lipschitz constants $\LL_{\vec \rho, T}$ for $M_k: \prod_{k=1}^k \BB_{\rho_k}^{\DD_0}(0) \rightarrow \DD_0$, one has that $\LL_{\vec \rho,T} \rightarrow 0$, uniformly for all $0 < \rho_i \leq \rho_0$ ($i = 1, \ldots, k$) and $0 < T \leq T_0$, for $\rho_0, T_0 > 0$ fixed, if one lets $T \rightarrow 0$, or $\rho_i \rightarrow 0$ for some $i \in \{1, \ldots, k\}$.
 \end{lemma}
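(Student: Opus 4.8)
The plan is to read Lemma~\ref{lem:product} as an assertion that $\WW_p^{(1,2)}(J \times \Omega)$ is a multiplication algebra in the supercritical range $p > \frac{n+2}{2}$, and to extract from the proof the quantitative dependence on $\norm{\cdot}_\infty$ needed for the degeneration of the Lipschitz constants. Everything reduces to the bilinear case $k=2$: once a bound
\[
 \norm{uv}_{\WW_p^{(1,2)}} \leq C \big( \norm{u}_{\WW_p^{(1,2)}} \norm{v}_\infty + \norm{u}_\infty \norm{v}_{\WW_p^{(1,2)}} \big)
\]
is available, the general statement for $M_k$ follows by induction via $v_1 \cdots v_k = v_1 \cdot (v_2 \cdots v_k)$ together with $\norm{v_2\cdots v_k}_\infty \le \prod_{i\ge 2}\norm{v_i}_\infty$. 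So first I would establish this bilinear estimate by the Leibniz rule. The $\LL_p$-norms of $uv$, of $\partial_t(uv)=(\partial_t u)v+u\,\partial_t v$, and of the contributions $(\Delta u)v$ and $u\,\Delta v$ to $\Delta(uv)$ are all immediately controlled by the right-hand side, using that $\partial_t,\nabla^2\colon \WW_p^{(1,2)}\to\LL_p$ are bounded and that $\WW_p^{(1,2)}\hookrightarrow\LL_\infty$ holds precisely because $p>\frac{n+2}{2}$. The one genuinely dangerous term is the mixed gradient contribution $\nabla u\cdot\nabla v$ to $\Delta(uv)$, since neither factor need lie in $\LL_p$ after multiplication.

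This term is the crux. The idea is that in the supercritical range the first spatial gradient gains integrability: by the parabolic Gagliardo--Nirenberg interpolation inequality,
\[
 \norm{\nabla w}_{\LL_{2p}(J\times\Omega)} \leq C \norm{w}_{\WW_p^{(1,2)}}^{1/2} \norm{w}_{\LL_\infty}^{1/2},
\]
where the exponent $\tfrac12$ is dictated by the parabolic scaling $(t,z)\mapsto(\lambda^2 t,\lambda z)$ (the homogeneities of $\nabla w$ in $\LL_{2p}$, of $\nabla^2 w$ in $\LL_p$ and of $w$ in $\LL_\infty$ balance at $\theta=\tfrac12$), and the right-hand side is finite because $p>\frac{n+2}{2}$ forces $w\in\LL_\infty$. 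Hölder's inequality with exponents $(2p,2p)$ gives $\norm{\nabla u\cdot\nabla v}_{\LL_p}\le\norm{\nabla u}_{\LL_{2p}}\norm{\nabla v}_{\LL_{2p}}$, and feeding in the interpolation bound followed by the elementary estimate $a^{1/2}b^{1/2}c^{1/2}d^{1/2}\le\tfrac12(ad+bc)$ with $a=\norm{u}_{\WW_p^{(1,2)}}$, $b=\norm{u}_\infty$, $c=\norm{v}_{\WW_p^{(1,2)}}$, $d=\norm{v}_\infty$ absorbs this term into the right-hand side of the bilinear estimate. Throughout I would fix $T\le T_0$ and record that, after extending in time (harmless on the subspace of functions vanishing at $t=0$), all embedding and interpolation constants may be chosen independently of $T\in(0,T_0]$; making this uniformity explicit is the main bookkeeping obstacle, and the point where $\partial\Omega\in\CC^2$ and a reflection/extension argument enter. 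Continuity and local Lipschitz continuity of $M_k$ on $\norm{\cdot}_\infty$-bounded sets then follow from the bilinear estimate through the telescoping identity $\prod_i v_i - \prod_i \tilde v_i = \sum_j (\prod_{i<j} v_i)(v_j - \tilde v_j)(\prod_{i>j}\tilde v_i)$ and the induced multilinear bound $\norm{a_1\cdots a_m}_{\WW_p^{(1,2)}} \le C_m \sum_l \norm{a_l}_{\WW_p^{(1,2)}} \prod_{i\ne l}\norm{a_i}_\infty$.

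Finally, for the degeneration of the Lipschitz constants on the balls $\BB_{\rho_k}^{\DD_0}(0)$ of the zero-trace subspace $\DD_0$ (note that a product of functions vanishing at $t=0$ again lies in $\DD_0$), the mechanism is that the embedding constant itself vanishes as $T\to0$. Indeed $\WW_p^{(1,2)}\hookrightarrow\CC^{(1,2)\cdot\mu}(\overline J\times\overline\Omega)$ with $\mu=1-\frac{n+2}{2p}>0$, so for $v\in\DD_0$ one gets $\norm{v}_{\LL_\infty(J\times\Omega)}\le [v]_{\CC^\mu(\overline J;\CC(\overline\Omega))}\,T^\mu \le C\,T^\mu\norm{v}_{\WW_p^{(1,2)}}$; i.e.\ the effective $\norm{\cdot}_\infty$-bound on each factor in a ball of radius $\rho_i$ is $C\,T^\mu\rho_i$. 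Substituting into the telescoped multilinear estimate, every summand carries at least $k-1\ge1$ factors measured in $\norm{\cdot}_\infty$, so that $\LL_{\vec\rho,T}\le C_k\,T^{(k-1)\mu}\sum_j\prod_{i\ne j}\rho_i$, which tends to $0$ uniformly for $\rho_i\le\rho_0$ as $T\to0$, and which is likewise small when the radii are small. I expect the genuine difficulties to be concentrated in two places: proving the Gagliardo--Nirenberg gradient bound with the sharp exponent $\tfrac12$ in the anisotropic setting, and securing the $T$-uniformity of all constants on $(0,T_0]$; the remaining Leibniz and telescoping combinatorics are routine.
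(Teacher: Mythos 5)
Your proof is correct, and its skeleton coincides with the paper's: Leibniz rule plus the supercritical embedding $\WW_p^{(1,2)}(J\times\Omega)\hookrightarrow\CC(\overline J\times\overline\Omega)$ for all terms carrying at most one highest-order derivative, improved integrability of the first spatial gradient for the mixed term $\nabla u\cdot\nabla v$, and $T$-uniform constants on the zero-trace subspace $\DD_0$ obtained by reflection/extension in time. The differences lie in two technical choices, and both work in your favour. For the mixed term the paper uses the slice-wise spatial embedding $\WW_p^{(1,2)}(J\times\Omega)\hookrightarrow\LL_p(J;\WW^2_p(\Omega))\hookrightarrow\LL_p(J;\WW^1_{2p}(\Omega))$ and ends up bounding that term by a product of full $\WW_p^{(1,2)}$-norms, whereas your parabolic Gagliardo--Nirenberg inequality $\norm{\nabla w}_{\LL_{2p}}\le C\norm{w}_{\WW_p^{(1,2)}}^{1/2}\norm{w}_{\infty}^{1/2}$ keeps every summand of the product rule multiplicatively homogeneous of degree $k-1$ in $\norm{\cdot}_\infty$. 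This matters for the last assertion: the paper justifies the decay of $\LL_{\vec\rho,T}$ only by the $T$-independence of the embedding constant on $\DD_0$, which by itself does not make the mixed gradient term small as $T\to0$ (as estimated there it is merely $O(\rho_i\rho_j)$ with no power of $T$); your combination of the homogeneous bilinear estimate with the temporal H\"older bound $\norm{v}_{\LL_\infty}\le C\,T^\mu\norm{v}_{\WW_p^{(1,2)}}$, $\mu=1-\tfrac{n+2}{2p}$, valid for $v\in\DD_0$, yields the explicit rate $\LL_{\vec\rho,T}=O(T^{(k-1)\mu})$ and thus actually completes that step. (A minor caveat: as your own bound $\LL_{\vec\rho,T}\lesssim T^{(k-1)\mu}\sum_j\prod_{i\neq j}\rho_i$ shows, sending a single $\rho_i\to0$ with $T$ and the remaining radii fixed does not annihilate the $j=i$ summand, so the clean statements are exactly the ones you prove: decay as $T\to0$, or as all radii shrink.)
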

 \begin{proof}
  If $p > \frac{n+2}{2}$, then $\WW^{(1,2)}_p(J \times \Omega) \hookrightarrow \CC(\overline{J} \times \overline{\Omega})$, and, hence, in particular
   \[
    \norm{\prod_{i=1}^n v_i}_{\LL_p(J \times \Omega)}
     \leq \CC(p,J,\Omega) \prod_{i=1}^k \norm{v_i}_{\CC(\overline{J} \times \overline{\Omega})}
     \leq C'(n,p,J,\Omega) \prod_{i=1}^k \norm{v_i}_{\WW^{(1,2)}_p(J \times \Omega)}.
   \]
  Moreover, the gradient of $\prod_{i=1}^k v_i$ is given by
   \[
    \nabla \left( \prod_{i=1}^k v_i \right)
     = \sum_{i=1}^k \left( \prod_{j \neq i} v_j \right) \nabla v_i
   \]
  and for $p > \frac{n+2}{2}$ one has
   \[
    \norm{\big( \prod_{j \neq i} v_j \big) \nabla v_i}_{\LL_p(J \times \Omega)}
     \leq \prod_{j \neq i} \norm{v_j}_{\CC(\overline{J} \times \overline{\Omega})} \norm{\nabla v_i}_{\LL_p(J \times \Omega)}
     \leq C \prod_{i=1}^k \norm{v_i}_{\WW^{(1,2)}_p(J \times \Omega)}.
   \]
  Similarly,
   \[
    \frac{\partial^2}{\partial x_h \partial x_l} \left( \prod_{i=1}^k v_i \right)
     = \sum_{i=1}^k \frac{\partial^2 v_i}{\partial x_h \partial x_l} \big( \prod_{j \neq i} v_j \big)
      + \sum_{i=1}^k \frac{\partial v_i}{\partial x_h} \big( \sum_{j \neq i} \frac{\partial v_j}{\partial x_l} \big) \big( \prod_{m \neq i,j} v_m \big)
   \]
  with
   \begin{align*}
    \norm{\frac{\partial^2 v_i}{\partial x_h \partial x_l}  \prod_{j \neq i} v_j}_{\LL_p(J \times \Omega)}
     &\leq \norm{\frac{\partial^2}{\partial x_h \partial x_l} v_i}_{\LL_p(J \times \Omega)} \prod_{j \neq i}  \norm{v_j}_{\CC(\overline{J} \times \overline{\Omega})}
     \\
     &\leq C \prod_{j=1}^k \norm{v_j}_{\WW_p^{(1,2)}(J \times \Omega)}
     \intertext{for $i = 1, \ldots, k$ and $h, l = 1, \ldots, n$, respectively,}
     \\
    \norm{\frac{\partial v_i}{\partial x_h} \frac{\partial v_j}{\partial x_l} \prod_{m \neq i,j} v_m}_{\LL_p(J \times \Omega)}
    &\leq \norm{\frac{\partial v_i}{\partial x_h}}_{\LL_{2p}(J \times \Omega)} \norm{\frac{\partial v_j}{\partial x_l}}_{\LL_{2p}(J \times \Omega)} \prod_{m \neq i,j} \norm{v_m}_{\CC(\overline{J} \times \overline{\Omega})}
    \\
    &\leq C \prod_{m=1}^k \norm{v_m}_{\WW_p^{(1,2)}(J \times \Omega)},
   \end{align*}
  for $i \neq j \in \{1, \ldots, k\}$ and $h, l \in \{1, \ldots, n\}$,
  since $\WW_p^{(1,2)}(J \times \Omega) \hookrightarrow \LL_p(J;\WW_p^2(\Omega)) \hookrightarrow \LL_p(J; \WW_{2p}^1(\Omega))$, where the latter holds true for all $p \geq \tfrac{n}{2}$.
  In the same fashion, the estimate
   \[
    \norm{\frac{\partial}{\partial t} \prod_{i=1}^k v_i}
     \leq C \prod_{i=1}^k \norm{v_i}_{\WW_p^{(1,2)}(J \times \Omega)}
   \]
  is established.
  Replacing $\vec v \in \WW_p^{(1,2)}(J \times \Omega)$ by $\vec v - \vec w$ for $\vec v, \vec w \in \WW_p^{(1,2)}(J \times \Omega)$ in the estimates above, shows that the map is Lipschitz continuous on every bounded subset of $\WW_p^{(1,2)}(J \times \Omega)$ as well.
  The last assertion follows from the standard lemma below, by which, for $\vec v \in \DD_0$, the embedding constants $C(T)$ for the embeddings $\WW_p^{(1,2)}((0,T) \times \Omega) \hookrightarrow \CC(\overline{J} \times \overline{\Omega})$ can be chosen independently of $T \in (0, T_0)$.
 \end{proof}
 
 \begin{lemma}
  Let $p \in (\frac{n+2}{2}, \infty)$ and $\Omega \subseteq \R^n$ be a bounded $\CC^2$-domain. Fix $T_0 > 0$. For $T \in (0,T_0]$ and
   \[
    \DD_0(T)
     = \{ u \in \WW_p^{(1,2)}((0,T) \times \Omega): \, u(0) = 0 \},
   \]
  the embedding constant for the continuous embedding
   \[
    \DD_0(T) \hookrightarrow \CC(\overline{J} \times \overline{\Omega})
   \]
  can be chosen independently of $T \in (0, T_0)$, e.g.\ $C = 2^{1/p} \CC(T_0)$ where $\CC(T_0)$ is an embedding constant for $T = T_0$.
 \end{lemma}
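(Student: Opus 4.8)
The plan is to build a bounded extension operator $E\colon \DD_0(T) \to \DD_0(T_0)$ whose norm is controlled by $2^{1/p}$ \emph{uniformly} in $T \in (0,T_0]$, and then to pull the embedding constant $\CC(T_0)$ of the fixed reference interval back to $(0,T)$. The whole argument hinges decisively on the vanishing time trace $u(0,\cdot)=0$, which is the defining feature of $\DD_0(T)$.

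First I would reflect in time. Given $u \in \DD_0(T)$, define the even reflection about $t=T$,
\[
 (E u)(t,\cdot) = \begin{cases} u(t,\cdot), & 0 \le t \le T, \\ u(2T-t,\cdot), & T \le t \le 2T, \\ 0, & t \ge 2T, \end{cases}
\]
and set $v := (E u)|_{(0,T_0)}$; note that for $2T>T_0$ the zero branch simply never occurs. By the trace/mixed-regularity embedding, $u$ is continuous in time with values in $\BB_{pp}^{2-2/p}(\Omega)$, so the even reflection is continuous across $t=T$, while continuity of the zero-extension across $t=2T$ holds \emph{precisely because} $(Eu)(2T,\cdot)=u(0,\cdot)=0$. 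A change of variables $t\mapsto 2T-t$ shows that reflection preserves the $\LL_p$-norms of $u$, of $\partial_t u$ and of all spatial derivatives up to second order, whence $\norm{Eu}_{\WW_p^{(1,2)}((0,2T)\times\Omega)}^p = 2\norm{u}_{\WW_p^{(1,2)}((0,T)\times\Omega)}^p$; restricting to $(0,T_0)$ can only decrease this, so $\norm{v}_{\WW_p^{(1,2)}((0,T_0)\times\Omega)} \le 2^{1/p}\norm{u}_{\WW_p^{(1,2)}((0,T)\times\Omega)}$.

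Second, I would verify that $v$ genuinely lies in $\DD_0(T_0)$. The spatial regularity is inherited pointwise in $t$, so the only delicate point is the time derivative across the corners $t=T$ and $t=2T$: since $v$ is absolutely continuous as a map into $\LL_p(\Omega)$ and its pointwise time derivative — namely $\partial_t u$ on $(0,T)$, $-(\partial_t u)(2T-\cdot)$ on $(T,2T)$, and $0$ beyond — belongs to $\LL_p$, this pointwise derivative coincides with the weak derivative. Hence $v \in \WW_p^1((0,T_0);\LL_p(\Omega)) \cap \LL_p((0,T_0);\WW_p^2(\Omega)) = \WW_p^{(1,2)}((0,T_0)\times\Omega)$, and $v(0,\cdot)=u(0,\cdot)=0$ gives $v \in \DD_0(T_0)$.

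Finally I would invoke the embedding on the fixed interval. By the definition of $\CC(T_0)$ one has $\norm{v}_{\CC(\overline{[0,T_0]}\times\overline{\Omega})} \le \CC(T_0)\,\norm{v}_{\WW_p^{(1,2)}((0,T_0)\times\Omega)}$, and since $v=u$ on $(0,T)$ the left-hand side dominates $\norm{u}_{\CC(\overline{J}\times\overline{\Omega})}$. Chaining the three estimates yields $\norm{u}_{\CC(\overline{J}\times\overline{\Omega})} \le 2^{1/p}\CC(T_0)\,\norm{u}_{\WW_p^{(1,2)}((0,T)\times\Omega)}$, so $C=2^{1/p}\CC(T_0)$ is admissible for every $T\in(0,T_0]$. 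The main obstacle — and the only step beyond bookkeeping — is exactly the claim that the reflected-and-zero-extended function remains in $\WW_p^{(1,2)}$ across the corners at $t=T$ and $t=2T$; both are resolved by the time-continuity coming from the mixed-regularity embedding together with the hypothesis $u(0,\cdot)=0$.
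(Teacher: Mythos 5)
Your proposal is correct and follows essentially the same route as the paper: even reflection in time about $t=T$ followed by extension by zero (which is admissible precisely because $u(0,\cdot)=0$), giving the factor $2^{1/p}$ and reducing to the embedding constant on the fixed interval $(0,T_0)$. You merely fill in the verification at the corners $t=T$ and $t=2T$, which the paper's proof leaves as ``follows easily'' (and you silently correct the paper's typo $u(T-t,\cdot)$ to the intended $u(2T-t,\cdot)$).
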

 \begin{proof}
  Since for $u \in \DD_0(T)$ one has $u(0) = 0$, it follows that
   \[
    \tilde u(t,\cdot)
     := \begin{cases}
      u(t,\cdot),
      &t \in [0, T],
      \\
     u(T-t,\cdot),
      &t \in (T,2T],
      \\
     0,
      &t > 2T
     \end{cases}
   \]
  defines a function $\tilde u \in \WW_p^{(1,2)}(\R_+ \times \Omega)$ and for its restriction to $[0,T_0] \times \Omega$ it holds that
   \begin{align*}
    \norm{\tilde u}_{\WW_p^{(1,2)}((0,T_0) \times \Omega)}
     &\leq 2^{1/p} \norm{u}_{\WW_p^{(1,2)}((0,T) \times \Omega)},
     \\
    \norm{\tilde u}_\infty
     &= \norm{u}_\infty.
   \end{align*}
  From here the assertion follows easily.
 \end{proof}

 \subsection{Classical solutions}

 For Fickean type reaction-diffusion systems with homogeneous Dirichlet or Neumann boundary data, it is well known that these systems have the property of \emph{instantaneous smoothing}, e.g.\ for the homogeneous heat equation with no-flux boundary conditions
  \[
   \partial_t \vartheta - \Delta \vartheta = 0, \quad \text{in } (0,T) \times \Omega,
    \quad \partial_{\vec n} \vartheta = 0 \quad \text{on } (0,T) \times \Sigma,
    \quad \text{and} \quad
    \vartheta(0,\cdot) = \vartheta^0 \in \BB_{pp}^{2-2/p}(\Omega) 
  \]
 for some $p > 1$ and $\Omega$ a bounded domain with $\CC^\infty$-boundary, one has $\vartheta \in \CC((0,T); \CC^\infty(\overline{\Omega}))$, or, if $\partial \Omega$ is merely of class $\partial \Omega \in \CC^{k + \alpha}$ for some $k \in \N_0$ and $\alpha \in (0,1)$, $\vartheta \in \CC((0,T); \CC^{k+\alpha}(\overline{\Omega}))$. This essentially follows from a bootstrap-type argument, maximal regularity in the spaces $\LL_p(\Omega)$ ($p \in (1, \infty)$) and $\CC^\alpha(\overline{\Omega})$ ($\alpha \in (0,1)$), and the Sobolev-Morrey-embeddings for regular bounded domains.
 In the case of inhomogeneous boundary data such a smoothing cannot be observed, as by the optimality results on parabolic evolution equations with inhomogeneous boundary data, the boundary data a priori need to satisfy the required regularity conditions in the optimality results. Therefore, one can only expect the solution $\vec c$ to lie in the class $\vec c \in \WW_p^{(1,2)}((0,T) \times \Omega;\R^3)$ if $\vec c^0 \in \BB_{pp}^{2-2/p}(\Omega;\R^3)$. On the other hand, for $\Omega$ being a bounded $\CC^{2+\alpha}$-domain and initial data in the class $\CC^{2+\alpha}(\overline{\Omega};\R^3)$ (and subject to the compatibility conditions at the boundary), one would expect the solution $\vec c$ to lie in the class $\vec c \in \CC^{(1,2) \cdot (1 + \alpha/2)}([0,T] \times \overline{\Omega};\R^3) = \CC^{1+\alpha/2}([0,T]; \CC(\overline{\Omega};\R^3)) \cap \CC([0,T]; \CC^{2+\alpha}(\overline{\Omega};\R^3))$.
 \begin{proposition}[Local-in-time existence of classical solutions]
  Let $\Omega \subseteq \R^n$ be a bounded domain of class $\partial \Omega \in \CC^{2+\alpha}$ for some $\alpha \in (0,1)$. If $\vec c^0 \in \CC^{2+\alpha}(\overline{\Omega}; \R^2)$ satisfies the compatibility conditions $c_3^0|_\Sigma = \kappa (c_1^0 c_2^0)|_\Sigma$ and $- d_1 \partial_{\vec n} c_1^0 = - d_2 \partial_{\vec n} c_2^0 = d_3 \partial_{\vec n} c_3^0$ on $\Sigma$, and the boundary values of the initial data are sufficiently small in $\LL^\infty(\Sigma;\R^3)$-norm, then there is $T > 0$ and a unique solution $\vec c \in \CC^{1+\frac{\alpha}{2}, 2 + \alpha}([0,T] \times \overline{\Omega}; \R^3)$ of \eqref{MP}, and it coincides with the $\WW_p^{(1,2)}$-solutions for $p \in (1, \infty)$.
 \end{proposition}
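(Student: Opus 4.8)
The plan is to repeat the decoupling-and-contraction scheme of the proof of Theorem~\ref{thm:l-it_existence}, replacing $\LL_p$-maximal regularity by parabolic Schauder theory in the anisotropic H\"older classes $\CC^{1+\alpha/2,2+\alpha}$. First I would rewrite \eqref{MP} as the three scalar linear problems \eqref{MP-1}, \eqref{MP-2} (Neumann problems for $c_1,c_2$) and \eqref{MP-3} (a Dirichlet problem for $c_3$), coupled through the flux data $g_1=g_2=d_3\,\partial_{\vec n}c_3$ and the trace datum $h=\kappa^{-1}(c_1c_2)|_\Sigma$, and define $\Phi\colon c_3\mapsto\tilde c_3$ exactly as before, now on the closed set
\[
 \DD_0=\{\,c\in\CC^{1+\alpha/2,2+\alpha}([0,T]\times\overline\Omega):\ c(0,\cdot)=c_3^0,\ \norm{c-c_3^0}_{\CC^{1+\alpha/2,2+\alpha}}\le\rho\,\}.
\]
The required building blocks are standard: (i) the linear Neumann and Dirichlet problems are uniquely solvable in $\CC^{1+\alpha/2,2+\alpha}([0,T]\times\overline\Omega)$ with Schauder constants that can be chosen uniformly in $T\in(0,T_0]$; (ii) the trace operators $u\mapsto u|_\Sigma$ and $u\mapsto\partial_{\vec n}u|_\Sigma$ map $\CC^{1+\alpha/2,2+\alpha}$ boundedly into $\CC^{1+\alpha/2,2+\alpha}([0,T]\times\Sigma)$, resp.\ $\CC^{(1+\alpha)/2,1+\alpha}([0,T]\times\Sigma)$, with $T$-uniform norms on the subspace of functions vanishing at $t=0$; and (iii) parabolic H\"older spaces form a Banach algebra, so $h=\kappa^{-1}(c_1c_2)|_\Sigma$ carries the regularity needed as a Dirichlet datum. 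The stated compatibility conditions are exactly the zeroth-order corner conditions required by Schauder theory: the algebraic surface constraint $c_3^0|_\Sigma=\kappa^{-1}(c_1^0c_2^0)|_\Sigma$ for \eqref{MP-3} and the flux matching $-d_1\partial_{\vec n}c_1^0=-d_2\partial_{\vec n}c_2^0=d_3\partial_{\vec n}c_3^0$ for the Neumann problems \eqref{MP-1}, \eqref{MP-2} (for which, at regularity $2+\alpha$, no higher-order compatibility is needed); the Dirichlet problem \eqref{MP-3} additionally requires the first-order condition obtained by differentiating the constraint once in time and inserting the bulk equations, which is part of the compatibility requirement on $\vec c^0$.

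The genuinely new and decisive point is the contraction estimate, for --- unlike in the $\LL_p$-setting of Lemma~\ref{lem:product} --- the full anisotropic H\"older norm does not become small as $T\to0$, since the spatial H\"older seminorms of the traces of $c_1^0,c_2^0$ on $\Sigma$ persist. Writing $\norm{\cdot}$ for the norm of $\CC^{1+\alpha/2,2+\alpha}([0,T]\times\overline\Omega)$, for two inputs $c_3,c_3'$ the difference $\tilde c_3-\tilde c_3'$ solves \eqref{MP-3} with zero initial data and Dirichlet datum $\kappa^{-1}(c_1c_2-c_1'c_2')|_\Sigma$, so that $\norm{\tilde c_3-\tilde c_3'}\le C_{\mathrm D}\,\norm{c_1c_2-c_1'c_2'}_{\CC^{1+\alpha/2,2+\alpha}(\Sigma)}$, while the Neumann estimates give $\norm{c_i-c_i'}\le C_{\mathrm N}\,\norm{c_3-c_3'}$. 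The key is to expand $c_1c_2-c_1'c_2'=(c_1-c_1')c_2+c_1'(c_2-c_2')$ and to invoke the \emph{refined} H\"older product inequality, in which one factor is measured only in the sup-norm,
\[
 \norm{fg}_{\CC^{1+\alpha/2,2+\alpha}(\Sigma)}\le C\bigl(\norm{f}_{\LL^\infty(\Sigma)}\norm{g}_{\CC^{1+\alpha/2,2+\alpha}(\Sigma)}+\norm{g}_{\LL^\infty(\Sigma)}\norm{f}_{\CC^{1+\alpha/2,2+\alpha}(\Sigma)}\bigr).
\]
This produces terms of two types. In the terms carrying a full H\"older norm of $c_2$ or of $c_1'$, the complementary factor is $\norm{c_i-c_i'}_{\LL^\infty(\Sigma)}$; since any $v$ with $v(0,\cdot)=0$ satisfies $\norm{v}_{\LL^\infty}\le T\norm{\partial_t v}_{\LL^\infty}\le T\norm{v}_{\CC^{1+\alpha/2,2+\alpha}}$, one gets $\norm{c_i-c_i'}_{\LL^\infty(\Sigma)}\le C_{\mathrm N}T\,\norm{c_3-c_3'}$, so these contributions are $O(T)$. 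The remaining terms carry the factor $\norm{c_i}_{\LL^\infty(\Sigma)}$, and applying the same zero-initial-trace time gain to $c_i-c_i^0$ yields $\norm{c_i}_{\LL^\infty([0,T]\times\Sigma)}\le\norm{c_i^0}_{\LL^\infty(\Sigma)}+CT$; these are small precisely because the boundary values of the initial data are small in $\LL^\infty(\Sigma)$. Collecting, the Lipschitz constant of $\Phi$ is bounded by $C\bigl(\norm{c_1^0}_{\LL^\infty(\Sigma)}+\norm{c_2^0}_{\LL^\infty(\Sigma)}+T\bigr)$, which is $<1$ once the boundary data are small enough and $T$ is chosen sufficiently small.

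For the self-mapping property I would argue analogously: writing $\Phi(c_3)-c_3^{\mathrm{ref}}$, where $c_3^{\mathrm{ref}}$ solves \eqref{MP-3} with the Dirichlet datum frozen at its initial value $\kappa^{-1}(c_1^0c_2^0)|_\Sigma$, the difference solves a Dirichlet problem with zero initial data and datum $\kappa^{-1}(c_1c_2-c_1^0c_2^0)|_\Sigma$ vanishing at $t=0$; the same refined-product bookkeeping, combined with the $\LL^\infty(\Sigma)$-smallness and short time, bounds $\norm{\Phi(c_3)-c_3^0}_{\CC^{1+\alpha/2,2+\alpha}}$ by $\rho$ after shrinking $\rho$ and $T$. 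The contraction mapping principle then yields a unique fixed point $c_3\in\DD_0$, and with the associated $c_1,c_2$ a unique $\vec c\in\CC^{1+\alpha/2,2+\alpha}([0,T]\times\overline\Omega;\R^3)$ solving \eqref{MP}. Finally, since $\CC^{1+\alpha/2,2+\alpha}([0,T]\times\overline\Omega)\hookrightarrow\WW_p^{(1,2)}((0,T)\times\Omega)$ and $\vec c^0\in\CC^{2+\alpha}(\overline\Omega)\subseteq\BB_{pp}^{2-2/p}(\Omega)$ for every $p\in(1,\infty)$, the classical solution is in particular a strong solution of \eqref{MP}, so the uniqueness part of Theorem~\ref{thm:l-it_existence} identifies it with the $\WW_p^{(1,2)}$-solution constructed there. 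I expect the contraction step to be the sole real obstacle: the maximal-regularity, trace, and Banach-algebra facts for the linear problems are classical parabolic Schauder theory and may be quoted, whereas making $\Phi$ a contraction hinges on the correct split of the bilinear term into $\LL^\infty(\Sigma)$- and full-H\"older-norm factors, which is exactly what forces the smallness assumption on the boundary values of the initial data.
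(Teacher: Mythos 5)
Your proposal follows essentially the same route the paper sketches: decompose \eqref{MP} into the scalar subproblems \eqref{MP-1}--\eqref{MP-3}, run the contraction mapping argument in $\CC^{1+\alpha/2,2+\alpha}$ using H\"older-optimal estimates for the linear Neumann and Dirichlet problems together with interpolation inequalities for functions vanishing at $t=0$, and obtain a Lipschitz constant of the form $C\bigl(\norm{\vec c^0}_{\LL^\infty(\Sigma)}+T\bigr)$, which is exactly the paper's condition $C\rho + C'(T) + C\norm{\vec c^0}_{\CC^0} < 1$ forcing the smallness assumption on the boundary values of the initial data. The paper only sketches this argument (and discusses how one might remove the smallness condition via the linearisation technique of Lunardi); your write-up supplies the details of the product/interpolation bookkeeping consistently with that sketch.
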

We will not give a full prove here, but only comment on approaches towards improving local-in-time existence of classical solutions.
As already noted, such existence results cannot simply be derived from a smoothing effect (which we do not have globally here, but only smoothing in the interior $\Omega$, i.e.\ smoothness of the solutions in $(0,T) \times \Omega$).
Instead, one may repeat the existence proof as in the $\LL^p$-setting, i.e.\ consider initial data $\vec c^0 \in \CC^{2+\alpha}(\overline{\Omega})$ such that the compatibility conditions $- d_1 \partial_{\vec n} c_1^0 = - d_2 \partial_{\vec n} c_2^0 = d_3 \partial_{\vec n} c_3^0$ as well as $\kappa c_3^0 = c_1^0 c_2^0$ are satisfied on $\Sigma$.
One may then try to decompose the system into three scalar subproblems \eqref{MP-1}--\eqref{MP-3}, analogously to the existence proof in the $\LL^p$-setting.
The map $\Phi$ for which one seeks a fixed point corresponding to a classical solution $\vec c \in \CC^{1+\alpha/2, 2+ \alpha}([0,T] \times \overline{\Omega}; \R^3)$, may then be defined on the complete metric space
 \[
  \mathcal{D}_0
   = \{ c \in \CC^{1+\alpha/2, 2+\alpha}([0,T] \times \overline{\Omega}): \, c(0,\cdot) = 0\}.
 \]
Using H\"older-optimal estimates for the inhomogeneous Dirichlet and Neumann problems for the diffusion equation, and employing interpolation inequalities for functions $f \in \CC^{1+\alpha/2, 2+\alpha}([0,T] \times \overline{\Omega})$ with $f(0,\cdot) = 0$, one can then show that $\Phi$ is a strict contraction $\Phi: \mathcal{D}_0 \rightarrow \mathcal{D}_\rho$, provided
 \[
  C \rho + C'(T) + C \norm{\vec c^0}_{\CC^0},
   < 1
 \]
where $C > 0$ and $C(T) \rightarrow 0$ as $T \rightarrow 0+$.
Clearly, for small initial data $\norm{\vec c^0}_{\CC^0}$ such a choice of $T, \rho > 0$ is possible, which gives local-in-time existence of classical solutions for sufficiently regular data, \emph{provided the initial data are small in supremums norm}.
Clearly, the latter is very unsatisfactory, and we are confident that this smallness condition is unnecessary, i.e.\ can be dropped.
The problem rather lies in the technique used for the proof, more precisely the decomposition into three (quite easy to handle) \emph{scalar} subproblems \eqref{MP-1}--\eqref{MP-3}.
Comparing our proof with the techniques used in \cite[section 8.5.4]{Lun95} for quite general scalar nonlinear parabolic problems with nonlinear boundary conditions, one finds that the approach used there should help to get rid of the smallness condition on $\norm{\vec c^0}_{\CC^0}$.
First note, however, that the results from there cannot directly be applied to the situation considered here:
On the one hand, in \cite[section 8.5.4]{Lun95} scalar equations are being considered, but even more importantly, the results are formulated for \emph{oblique boundary conditions} under the \emph{non-tangentiality condition}.
The prototype fast-surface-limit model \eqref{MP}, however, has boundary conditions of mixed type (flux-coupling conditions $-d_1 \partial_{\vec n} c_1 = -d_2 \partial_{\vec n} c_2 = d_3 \partial_{\vec n} c_3$, i.e.\ first order type boundary conditions, plus a nonlinear Dirichlet coupling-condition $\kappa c_3 = c_1 c_2$, i.e.\ a zero order boundary condition) and using a linearisation analogous to the technique used in \cite[section 8.5.4]{Lun95} hints that one should consider a linearised version of the fast-surface limit model \eqref{MP} of the form
 \begin{align*}
  \partial_t \vec c - \bb D \Delta c
   &= \vec f
   &&\text{in } (0, \infty) \times \Omega,
   \\
  - d_1 \partial_{\vec n} c_1 - d_3 \partial_{\vec n} c_3
   &= g_1
   &&\text{on } (0, \infty) \times \Sigma,
   \\
  - d_2 \partial_{\vec n} c_2 - d_3 \partial_{\vec n} c_3
   &= g_2
   &&\text{on } (0, \infty) \times \Sigma,
   \\
  \kappa c_1 - c_1^0 c_2 - c_2^0 c_3
   &= h
   &&\text{on } (0, \infty) \times \Sigma
   \\
  \vec c(0,\cdot)
   &= \vec c^0
   &&\text{on } \overline{\Omega}
 \end{align*}
and one has to show that this system has optimal H\"older regularity, i.e.\ for every $\vec f \in \CC^{\alpha/2,\alpha}([0,T] \times \overline{\Omega};\R^3)$, $g_1, g_2 \in \CC^{(1+\alpha)/2,1+\alpha}([0,T] \times \Sigma)$, $h \in \CC^{1+\alpha/2,2+\alpha}([0,T] \times \Sigma)$ and $\vec c^0 \in \CC^{2+\alpha}(\overline{\Omega})$, there is a unique solution $\vec c \in \CC^{1+\alpha/2,2+\alpha}([0,T] \times \overline{\Omega}; \R^3)$ which depends continuously on the data, in the sense that
 \[
  \norm{\vec c}_{\CC^{1+\alpha/2,2+\alpha}}
   \leq C \big( \norm{\vec f}_{\CC^{\alpha/2,\alpha}} + \norm{\vec g}_{\CC^{(1+\alpha)/2,1+\alpha}} + \norm{h}_{\CC^{1+\alpha/2,2+\alpha}} + \norm{\vec c^0}_{\CC^{2+\alpha}} \big)
 \]
with some $C > 0$.
To our knowledge, for general systems of mixed type of boundary conditions such optimality results have not been stated explicitly in the literature, but based on the fact that in the $\LL^p$-setting such an extension of $\LL^p$-maximal regularity results is possible, when modifying the proofs for abstract $\LL^p$-maximal regularity results, we strongly believe that such Schauer-type estimates can be derived as well.
Then, the fixed point procedure could be applied in the space
 \[
  \mathcal{D}_0
   = \{\vec c \in \CC^{1+\alpha/2,2+\alpha}([0,T] \times \overline{\Omega}): \, \vec c(0,\cdot) = \vec 0 \}
 \]
and $\CC^{\alpha/2,\alpha}$-maximal regularity in connection with interpolation inequalities for functions $f$ in the class $\CC^{1+\alpha/2,2+\alpha}([0,T] \times \overline{\Omega})$ would provide a unique local-in-time classical solution $\vec c \in \CC^{1+\alpha/2,2+\alpha}([0,T] \times \overline{\Omega}; \R^3)$ without any smallness condition on the initial data $\vec c^0 \in \CC^{2+\alpha}(\overline{\Omega}; \R^3)$, complying to the compatibility conditions $-d_1 \partial_{\vec n} c_1^0 = - d_2 \partial_{\vec n} c_2^0 = d_3 \partial_{\vec n} c_3^0$ and $\kappa c_3^0 = c_1^0 c_2^0$ on $\Sigma$ of the latter.

 \subsection{Positive invariance for the model problem}
 
 In the previous sections, we have shown that for any initial data $\vec c^0 \in \BB_{pp}^{2-2/p}(\Omega;\R^3)$ satisfying the compatibility conditions and for $p > \frac{n+2}{2}$, there is a unique solution $\vec c \in \WW_p^{(1,2)}((0,T) \times \Omega; \R^3)$.
 This can be extended to a non-continuable maximal solution $\vec c: (0,T_\mathrm{max}) \times \Omega \rightarrow \R^3$, where either $T_\mathrm{max} = \infty$, or $\norm{\vec c(t,\cdot)}_{\BB_{pp}^{2-2/p}(\Omega;\R^3)}$ blows up in finite time as $t \nearrow T_\mathrm{max}$.
 \newline
 To have a physically relevant model, however, one aims for positivity of solutions: As $\vec c$ models a vector of molar concentrations, only $c_i(t,\vec z) \geq 0$ for all $t \geq 0$ and $\vec z \in \overline{\Omega}$ is physically significant, and, in particular, for non-negative initial data, the solution should stay non-negative for all times $t \in (0, T_\mathrm{max})$.

 \begin{proposition}[Positive invariance]
  Let $p > \frac{n}{2}$, $\Omega \subseteq \R^n$ be a bounded domain of class $\CC^{2+\alpha}$, let $\vec 0 \leq \vec c_0 \in \CC^{2+\alpha}(\overline{\Omega})$ satisfy the compatibility conditions for \eqref{MP} and $\vec c \in \CC^{1+\alpha/2,2+\alpha}([0,T] \times \overline{\Omega};\R^3)$ be a classical solution to \eqref{MP}.
  Then either $\vec c^0 \equiv \vec 0$ (and hence $\vec c \equiv \vec 0$), or $\vec c(t,\vec z) \in (0, \infty)^3$ for all $t \in (0, T]$ and $\vec z \in \overline{\Omega}$.
 \end{proposition}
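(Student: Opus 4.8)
The plan is to split the proof into two stages: first I would show that every component stays non-negative on $[0,T]\times\overline{\Omega}$, and then, assuming $\vec c^0 \not\equiv \vec 0$, upgrade this to strict positivity for $t>0$. Both stages rest on the strong parabolic maximum principle and Hopf's boundary point lemma applied to the scalar heat equations $\partial_t c_i - d_i \Delta c_i = 0$ of \eqref{MP}; since the reaction lives entirely in the boundary conditions, the only real work is to show that the nonlinear Dirichlet coupling $c_1 c_2 = \kappa c_3$ and the flux coupling $-d_1\partial_{\vec n} c_1 = -d_2\partial_{\vec n} c_2 = d_3\partial_{\vec n} c_3$ on $\Sigma$ are compatible with the positive cone rather than working against it.

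For non-negativity I would argue by contradiction via a first-contact analysis. Put $t_0 := \sup\{\tau \in [0,T] : \vec c \ge \vec 0 \text{ on } [0,\tau]\times\overline{\Omega}\}$ and suppose $t_0 < T$; by continuity some component $c_{i_0}$ attains the value $0$ at a point $(t_0,z_0)$ while $\vec c \ge \vec 0$ on the whole cylinder $[0,t_0]\times\overline{\Omega}$, so $(t_0,z_0)$ is a parabolic minimum of $c_{i_0}$. The species $c_3$ needs no separate treatment: once $c_1,c_2 \ge 0$ on $\Sigma$, the Dirichlet slaving $c_3|_\Sigma = \kappa^{-1} c_1 c_2 \ge 0$ together with $c_3^0 \ge 0$ gives $c_3 \ge 0$ directly from the scalar minimum principle. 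Hence the critical contact is $i_0 \in \{1,2\}$, say $i_0=1$, at a boundary point $z_0 \in \Sigma$ (an interior contact forces $c_1 \equiv 0$ up to $t_0$ by the strong maximum principle, a degenerate alternative that does not violate non-negativity and is absorbed by restarting the argument at $t_0$). Since $c_1(t_0,z_0)=0$, the coupling gives $c_3(t_0,z_0) = \kappa^{-1} c_1 c_2(t_0,z_0) = 0$ as well, so $c_1$ and $c_3$ both realise a parabolic minimum value $0$ at $(t_0,z_0)$. Hopf's lemma then yields $\partial_{\vec n} c_1(t_0,z_0) < 0$ and $\partial_{\vec n} c_3(t_0,z_0) \le 0$, whereas the flux identity $-d_1\partial_{\vec n} c_1 = d_3\partial_{\vec n} c_3$ forces these two normal derivatives to carry opposite signs. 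This sign clash is the contradiction that rules out a first touchdown, so $\vec c \ge \vec 0$ throughout.

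For the second stage assume $\vec c^0 \not\equiv \vec 0$. The strong maximum principle gives, for each $i$, that $c_i$ is either identically zero on an initial time slab or strictly positive on all of $\Omega\times(0,T]$. Strict positivity is then propagated across species by the same two couplings: on $\Sigma$, $c_3 > 0$ forces $c_1 c_2 = \kappa c_3 > 0$ and hence $c_1,c_2 > 0$, while $c_1,c_2 > 0$ forces $c_3 > 0$; and a boundary zero of a species whose interior values are already positive is impossible, because Hopf would assign its outward normal derivative a definite (negative) sign that the flux identity transmits with the \emph{wrong} sign to a partner species that would simultaneously have to sit at its own boundary minimum. Feeding this boundary positivity back into the scalar minimum principle (for $c_3$, Dirichlet) and into the strong maximum principle (for $c_1,c_2$, through their flux conditions) then yields $c_i > 0$ on all of $\overline{\Omega}\times(0,T]$.

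\textbf{Main obstacle.} The crux throughout is the flux coupling $-d_1\partial_{\vec n} c_1 = -d_2\partial_{\vec n} c_2 = d_3\partial_{\vec n} c_3$, which blocks any purely componentwise use of the maximum principle: the sign of one normal derivative is rigidly tied to that of another. The device that resolves both stages is precisely that Hopf's lemma fixes the sign of the outward normal derivative at a boundary zero, and the flux identity then demands the opposite sign on the coupled species, which cannot also occupy a boundary minimum. The genuinely delicate point for the stated dichotomy, and the place where the nonlinear Dirichlet relation and the flux relation must be used \emph{jointly}, is to verify that the nonzero initial datum actually drives all three components positive and that no species can be left stranded on the boundary of the positive octant; this is exactly the propagation step above, and carrying it through carefully is what I expect to be the main technical effort.
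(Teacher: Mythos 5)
Your central mechanism --- a joint boundary zero of a species $i_0\in\{1,2\}$ and of $c_3$ (forced by the Dirichlet coupling $c_1c_2=\kappa c_3$), combined with Hopf's lemma and the flux identity $-d_{i_0}\partial_{\vec n}c_{i_0}=d_3\partial_{\vec n}c_3$ producing a sign clash --- is exactly the paper's ``third case'', and your variant (strict Hopf for one species, the trivial inequality $\partial_{\vec n}c_3\le 0$ at a boundary minimum for the other) is, if anything, slightly cleaner than the paper's appeal to Hopf for both components. There are, however, two genuine gaps.

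First, the case $t_0=0$ is not addressed, and your first-contact scheme cannot handle it. The hypothesis only gives $\vec c^0\ge\vec 0$, so the initial datum may vanish at boundary points; then the supremum defining $t_0$ may equal $0$, the touchdown lies on the initial time slice, and Hopf's lemma is inapplicable there (there is no parabolic interior before $t=0$ on which the component is positive), while a zero of the datum itself is no contradiction. Your argument therefore shows only that a first touchdown at a \emph{positive} time is impossible; it does not exclude the solution dipping negative immediately. The paper closes this by perturbing the initial data to strictly positive data satisfying the compatibility conditions, using the contraction-mapping well-posedness (continuous dependence) to obtain nearby classical solutions, applying the boundary argument to those, and passing to the limit. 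Some such approximation is needed; it is the one step that uses the local existence theory rather than only maximum principles.

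Second, the interior-contact branch is not ``absorbed by restarting the argument at $t_0$''. If $c_{i_0}\equiv 0$ on $[0,t_0]\times\overline{\Omega}$ with $t_0>0$, restarting at $t_0$ puts you precisely back into the unhandled $t_0=0$ situation (the new initial datum has zeros), so nothing is gained; and for the stated dichotomy you must show that this degenerate branch lands in the alternative $\vec c^0\equiv\vec 0$, which is what the paper's ``second case'' argues by propagating the vanishing of the boundary trace and of the normal derivative through both coupling conditions to the other components. Your stage-two propagation runs only in the direction ``positive implies positive'' on $\Sigma$ and does not rule out one component being identically zero while the others are not. This branch is genuinely delicate: data of the form $\vec c^0=(0,c_2^0,0)$ with $\partial_{\vec n}c_2^0=0$ satisfy the compatibility conditions and yield the solution $(0,c_2,0)$ with $c_2$ the homogeneous-Neumann heat flow --- a degenerate solution the paper itself exhibits in its equilibria subsection --- so it must be isolated explicitly rather than dismissed.
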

 \begin{proof}
  By the weak parabolic maximum principle, for any $t \in [0, T]$, it holds that
   \[
    m(t)
     := \min_{(s,\vec z) \in [0,t] \times \overline{\Omega}} c_i(s,\vec z)
     = \min \{ \min_{\vec z \in \overline{\Omega}} c_i^0(\vec z), \, \min_{(s,\vec z) \in [0,t] \times \Sigma} c_i(s,\vec z)\},
     \quad
     i = 1, 2, 3,
   \]
  and by the strict maximum principle, for every $(t,z) \in (0,T] \times \Omega$ either $c_i(t,\vec z) > m(t)$ or $c_i = \mathrm{const}$ on $[0,t] \times \overline{\Omega}$.
  Let $t_0 := \sup \{t \in [0,T]: c_i \geq 0 \text{ on } [0,t] \times \Omega\}$, and \emph{assume} that there are $i_0 \in \{1, 2, 3\}$ and $z_0 \in \overline{\Omega}$ such that $c_{i_0}(t_0,z_0) = 0$ (otherwise, $t_0 = T$ and $c_i > 0$ on $[0,T] \times \overline{\Omega}$ for $i = 1, 2, 3$).
  \newline
  \emph{First case: $t_0 = 0$.}
  In this case the solution would immediately have negative values for small $t > 0$. 
  Using a small, but strictly positive perturbation of the initial data, one can use the contraction mapping principle to show that for small perturbations obeying the compatibility conditions, classical solutions $\vec c^\varepsilon \in \CC^{1+\alpha/2,2+\alpha}([0,T_\varepsilon] \times \overline{\Omega};\R^3)$ exist and using the arguments below (second and third case), it can be shown that $c_i^\varepsilon > 0$. Letting the perturbation become small, one then obtains that $c_i(t,\vec z) = \lim_{n \rightarrow \infty} c_i^{\varepsilon_n}(t, \vec z) \geq 0$ for small times $t \geq 0$ and $\vec z \in \overline{\Omega}$, so that the case $t_0 = 0$ can be excluded.
  \newline
  \emph{Second case: $t_0 > 0$ and $z_0 \in \Omega$.}
  Then, by the strict maximum principle, it follows that $c_{i_0} \equiv 0$ on $[0,t] \times \overline{\Omega}$, and in particular $c_{i_0}|_{[0,t] \times \Sigma} \equiv 0$ and $\partial_{\vec n} c_{i_0}|_{[0,t] \times \Sigma} = 0$.
  From the boundary conditions $\kappa c_3|_\Sigma = (c_1 c_2)|_\Sigma$ and $\partial_{\vec n} c_1|_\Sigma = \partial_{\vec n} c_2|_\Sigma = - \partial_{\vec n} c_3|_\Sigma$ on $[0,t] \times \Sigma$, it follows that all of these boundary terms are identically zero on $[0,t] \times \Omega$.
  By Hopf's lemma (as $\partial_{\vec n} c_i = 0$) this can only be the case if $c_i(t_0,\cdot)$ is not strictly positive everywhere on $\Omega$ for all $i = 1, 2, 3$, but then by the strict maximum principle already $c_i \equiv 0$ on $[0,t] \times \overline{\Omega}$ for $i = 1, 2, 3$. Accordingly, $\vec c^0 \equiv 0$ and $\vec c \equiv 0$ as solutions are unique.
  \newline
  \emph{Third case: $t_0 > 0$, $c_i > 0$ in $[0,t_0] \times \Omega$ and $z_0 \in \Sigma$:} In this case, by the boundary conditions $c_3(t_0,z_0) = \kappa c_1(t_0,z_0) c_2(t_0,z_0)$, there is $i_1 \neq i_0$ and $3 \in \{i_0, i_1\}$ such that $c_{i_1}(t_0,z_0) = 0$ as well, and, moreover, $d_{i_0} \partial_{\vec n} c_{i_0}(t_0,z_0) = - d_{i_1} \partial_{\vec n} c_{i_1}(t_0,z_0)$.
  By Hopf's lemma, however, both $\partial_{\vec n} c_{i_0}(t_0,z_0), \partial_{\vec n} c_{i_1}(t_0,z_0) < 0$, a contradiction!
  \newline
  These considerations show that either $\vec c$ is identically zero, or component-wise strictly positive, for all $t \in (0, T]$.
 \end{proof}

  \subsection{A-priori bounds on the strong solution of the fast sorption--surface-chemistry model}

 By Theorem \ref{thm:l-it_existence}, a bound on the phase space norm $\norm{\cdot}_{\BB_{pp}^{2-2/p}}$ is enough for establishing global existence of a strong solution. To derive such a bound, is a delicate matter and it is actually unclear how to achieve this.
 On the other hand, for some weaker norms a-priori bounds are valid \emph{for free}.
 The derivation of these is based on the parabolic maximum principle and entropy considerations, highlighting the fruitful interplay between mathematics and physics.
 
 \begin{theorem}[A-priori bounds]
  Let $\vec c^0 \in \II_p(\Omega) \cap \CC^2(\overline{\Omega}; \R^N)$ and $\vec c \in {\CC^{(1,2)}([0, T_\mathrm{max}) \times \overline{\Omega};\R_+^N)}$ be a maximal classical solution to the fast-surface-chemistry--fast-sorption--fast-transmission limit problem
   \begin{align*}
    \partial_t \vec c - \bb D \Delta \vec c
     &= \vec r(\vec c),
     &&t \geq 0, \, \vec z \in \Omega
     \\
    - \vec e^k \cdot \bb D \partial_{\vec n} \vec c|_\Sigma
     &=  \vec 0,
     &&k = 1, \ldots, n^\Sigma, \, t \geq 0, \vec z \in \Sigma
     \\
    \vec r^\mathrm{b}(\vec c|_\Sigma)
     &= \vec 0,
     &&t \geq 0, \vec z \in \Sigma
     \\
    \vec c(0,\cdot)
     &= \vec c^0,
     &&\vec z \in \overline{\Omega}.    
   \end{align*}
  Further assume that there is a conservation vector with strictly positive entries
   \[
    \vec e \in (0, \infty)^N \cap \{\vec \nu^a: a = 1, \ldots, m\}^\bot \cap \{\vec \nu^{\Sigma,a}: a = 1, \ldots, m^\Sigma\}^\bot.
   \]
  Then, for every $T_0 \in (0, T_\mathrm{max}] \cap \R$ there is $C = \CC(T_0) > 0$, also depending on the initial data $\vec c^0$, such that the following a-priori bounds hold true:
   \begin{enumerate}
    \item
     $\LL_\infty \LL_1$--a-priori estimate:
      \[
       \sup_{t \in [0, T_0)} \norm{\vec c(t,\cdot)}_{\LL_1(\Omega;\R^N)}
        \leq C \norm{\vec c^0}_{\LL_1(\Omega; \R^N)};
      \]
    \item
     $\LL_1 \LL_\infty$--a-priori estimate:
      \[
       \sup_{\vec z \in \overline{\Omega}} \norm{c(\cdot,\vec z)}_{\LL_1([0,T_0);\R^N)}
        \leq C;
      \]
    \item
     $\LL_2 \LL_2$--a-priori estimate:
      \[
       \norm{\vec c}_{\LL_2([0,T_0) \times \Omega; \R^N)}
        \leq C;
      \]
    \item
     Moreover, the following \emph{entropy identity} holds true:
      \begin{align*}
       &\int_\Omega c_i(t,\vec z) (\mu_i^0 + \ln \vec c_i(t,\vec z) - 1) \dd \vec z
        \\ &\quad
        + \int_0^t \int_\Omega \sum_{i=1}^N d_i \frac{\abs{\nabla c_i(s,\vec z)}^2}{c_i(s,\vec z)} \dd \vec z \dd s
        + \sum_{a=1}^m \int_0^t \int_\Omega \big( \sum_{i=1}^N \ln(c_i) \nu_i^a \big) \left(\exp \big( \sum_{i=1}^N \ln(c_i) \nu_i^a \big) - 1 \right) \dd \vec z \dd s
        \\ &\quad
       = \int_\Omega c_i^0(\vec z) (\mu_i^0 \ln c_i^0(\vec z) - 1) \dd \vec z,
        \quad
        t \in [0,T_0).
      \end{align*}
   \end{enumerate}
 \end{theorem}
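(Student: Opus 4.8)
The backbone of all four estimates is the \emph{positive conservation vector} $\vec e$. Since the bulk reaction term satisfies $\vec r(\vec c)\in\lin\{\vec\nu^a\}$ and $\vec e\perp\vec\nu^a$, we have $\vec e\cdot\vec r(\vec c)\equiv 0$. Moreover, as $\vec e\perp\vec\nu^{\Sigma,a}$ for all $a$, the vector $\vec e$ lies in $(\lin\{\vec\nu^{\Sigma,a}\})^\bot=\lin\{\vec e^k\}$, so the flux conditions $\vec e^k\cdot\bb D\partial_{\vec n}\vec c=0$ give $\vec e\cdot\bb D\partial_{\vec n}\vec c=0$ on $\Sigma$. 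The plan is therefore to pass to the two scalar quantities $z:=\vec e\cdot\vec c\ge 0$ and $s:=\vec e\cdot\bb D\vec c=\sum_i e_id_ic_i$, which (since $\bb D$ is constant) satisfy the scalar conservation law $\partial_t z-\Delta s=0$ in $\Omega$ together with the no-flux condition $\partial_{\vec n}s=\vec e\cdot\bb D\partial_{\vec n}\vec c=0$ on $\Sigma$. First I would integrate this law over $\Omega$: the flux term vanishes by the divergence theorem, so $\frac{\mathrm{d}}{\mathrm{d}t}\int_\Omega z\dd\vec z=0$ and $\int_\Omega\vec e\cdot\vec c$ is conserved. Combined with $\vec c\ge\vec 0$ and $\min_ie_i>0$ this yields $\norm{\vec c(t,\cdot)}_{\LL_1}\le(\min_ie_i)^{-1}\int_\Omega\vec e\cdot\vec c^0\le C\norm{\vec c^0}_{\LL_1}$, the $\LL_\infty\LL_1$ estimate.

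For the $\LL_2\LL_2$ bound I would invoke the classical \emph{duality lemma}. The key observation is that, because $\min_id_i>0$ and $z\ge 0$, one may write $s=a\,z$ with $a:=s/z$ a bounded measurable coefficient obeying $\min_id_i\le a\le\max_id_i$ (and $s=0$ where $z=0$). Thus $z\ge 0$ solves $\partial_tz-\Delta(a z)=0$ with $\partial_{\vec n}(a z)=0$ and $z(0,\cdot)=\vec e\cdot\vec c^0\in\LL_2(\Omega)$. Testing against the backward dual problem $-\partial_t\phi-a\Delta\phi=\theta$, $\partial_{\vec n}\phi=0$, $\phi(T_0,\cdot)=0$ for arbitrary $0\le\theta\in\LL_2$, integration by parts (using $\partial_{\vec n}(az)=\partial_{\vec n}\phi=0$) gives $\int_0^{T_0}\int_\Omega z\theta=\int_\Omega z(0,\cdot)\phi(0,\cdot)$, while the standard energy estimate for the dual problem (multiplying by $-\Delta\phi$) provides $\norm{\phi(0,\cdot)}_{\LL_2}\le C\norm{\theta}_{\LL_2}$. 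Taking the supremum over $\theta$ yields $\norm{z}_{\LL_2((0,T_0)\times\Omega)}\le C\norm{z(0,\cdot)}_{\LL_2}$, and since $z\asymp\abs{\vec c}$ the $\LL_2\LL_2$ estimate follows.

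The $\LL_1\LL_\infty$ bound I would obtain by integrating the scalar law in time. Setting $S(\vec z):=\int_0^{T_0}s(t,\vec z)\dd t$ and integrating $\partial_tz=\Delta s$ over $[0,T_0]$ produces the elliptic Neumann problem $-\Delta S=z(0,\cdot)-z(T_0,\cdot)$ in $\Omega$, $\partial_{\vec n}S=0$ on $\Sigma$, whose right-hand side has vanishing mean by the conservation law. Representing $S$ through the Neumann Green's function $N$ and using that its singular part is nonnegative, together with $z(0,\cdot)\in\LL_\infty$ and $z(T_0,\cdot)\ge 0$, $z(T_0,\cdot)\in\LL_1$, I would bound $S(\vec z)-\bar S$ from above pointwise; the mean $\bar S$ is controlled by the already-established $\LL_\infty\LL_1$ estimate. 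Since $S\asymp\int_0^{T_0}z\dd t$ and $z\asymp\abs{\vec c}$, this gives $\sup_{\vec z\in\overline\Omega}\norm{c(\cdot,\vec z)}_{\LL_1([0,T_0))}\le C$.

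Finally, for the entropy identity I would differentiate the free energy $H(\vec c)=\sum_i\int_\Omega c_i(\mu_i^0+\ln c_i-1)\dd\vec z$, whose chemical potentials are $\mu_i=\mu_i^0+\ln c_i$. Using strict positivity of $\vec c$ on $(0,T_0)\times\overline\Omega$ (positive-invariance proposition), one computes $\frac{\mathrm{d}}{\mathrm{d}t}H=\int_\Omega\sum_i\mu_i(d_i\Delta c_i+r_i)\dd\vec z$, and integration by parts turns the diffusion part into $-\sum_id_i\int_\Omega\abs{\nabla c_i}^2/c_i\dd\vec z$ plus the boundary term $\int_\Sigma\vec\mu\cdot(\bb D\partial_{\vec n}\vec c)\dd\sigma$. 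The crucial structural point is that this boundary term \emph{vanishes}: the flux conditions force $\bb D\partial_{\vec n}\vec c\in\lin\{\vec\nu^{\Sigma,a}\}$, while $\vec r^\mathrm{b}(\vec c|_\Sigma)=\vec 0$ together with Assumption \ref{assmpt:detailled_balance} is equivalent to vanishing affinities $\vec\mu|_\Sigma\cdot\vec\nu^{\Sigma,a}=0$, so the pairing is zero. The bulk reaction part $\int_\Omega\sum_i\mu_ir_i$ is rewritten through the logarithmic closure as $\sum_a\int_\Omega\mathcal{A}_aR_a$ in the sign-definite form $-\sum_a\int_\Omega\big(\sum_i\nu_i^a\ln c_i\big)\big(\exp(\sum_i\nu_i^a\ln c_i)-1\big)$ (after the normalisation of $\mu_i^0$ and the rate constants), and integrating in $t$ yields the stated identity. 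I expect the \emph{main obstacle} to be the rigorous justification of this last computation rather than its formal content: the integrands $\abs{\nabla c_i}^2/c_i$ and $\ln c_i$ are singular on the zero set of $c_i$, so the differentiation of $H$ is only licit on $[\varepsilon,T_0']\subset(0,T_0)$ where strict positivity holds, after which one must pass $\varepsilon\to 0$ by monotone convergence of the dissipation integrals and control $H(\vec c^0)$ using $x\ln x\to 0$; verifying that the duality lemma applies with the merely $\LL^\infty$ coefficient $a$ is the analogous technical point for the $\LL_2\LL_2$ estimate.
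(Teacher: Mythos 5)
Your proposal is correct and, for items (1) and (4), essentially coincides with the paper's argument: the $\LL_\infty \LL_1$ bound comes from integrating $\vec e \cdot \vec c$ over $\Omega$ and using that $\vec e \in \lin\{\vec e^k\}$ so the flux term vanishes, and the entropy identity hinges on exactly the two structural facts you isolate, namely that the no-flux conditions force $\bb D \partial_{\vec n} \vec c|_\Sigma = \eta\, \vec \nu^{\Sigma,1} \in \lin\{\vec \nu^{\Sigma,a}\}$ while $\vec r^{\mathrm{b}}(\vec c|_\Sigma) = \vec 0$ together with detailed balance gives vanishing affinities, so the boundary term $\int_\Sigma \vec \mu \cdot \bb D \partial_{\vec n} \vec c \dd \sigma$ dies. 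For (2) and (3) you take genuinely different, though equally standard, routes. The paper obtains the $\LL_1 \LL_\infty$ bound by applying the parabolic maximum principle to $w(t,\vec z) = \int_0^t \bb D \vec c(s,\vec z) \cdot \vec e \dd s$, which satisfies $\partial_t w - d_\mathrm{max} \Delta w \leq d_\mathrm{max}\, \vec c^0 \cdot \vec e$ with homogeneous Neumann data; this is shorter than your elliptic route via the Neumann Green's function and avoids the discussion of the sign and integrability of its regular and singular parts (though your argument does go through, using that the negative part of the Neumann function is bounded and that $\bar S$ is controlled by the $\LL_\infty \LL_1$ bound). For the $\LL_2 \LL_2$ estimate the paper's stated bound $\int_0^T \int_\Omega (\bb D \vec c \cdot \vec e)(\vec c \cdot \vec e) \leq \tfrac{d_\mathrm{max}}{d_\mathrm{min}} T \norm{\vec c^0 \cdot \vec e}_{\LL_\infty} \norm{\bb D \vec c^0 \cdot \vec e}_{\LL_1}$ points to the elementary version of the duality argument, i.e.\ testing the equation for $z = \vec e \cdot \vec c$ against the time-integrated flux $\int_0^t s \dd \tau$ and discarding the sign-definite term $-\tfrac{1}{2}\int_\Omega \abs{\nabla W(T)}^2$, rather than solving a backward dual problem; your formulation is the classical duality lemma of Pierre and works just as well, at the cost of the regularity discussion for the dual problem with the merely bounded coefficient $a = s/z$ that you correctly flag. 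The technical points you raise at the end (justification of the entropy computation near the zero set of $\vec c$, smoothing of $a$ in the duality lemma) are legitimate, but the paper does not address them either: its proof is explicitly a sketch, with details deferred to a forthcoming companion paper.
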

 
 \begin{proof}
  The a-priori bounds can be established analogously to those for standard reaction-diffusion systems, e.g.\ Fickean diffusion in the bulk with no-flux boundary conditions, cf.\ \cite{Pie10}. As this can be done for general reaction-diffusion-sorption systems, details will be presented in \cite{AuBo19+2} and here only a sketch of the proofs will be given. \newline
  For the $\LL_\infty \LL_1$-estimate, one simply considers the time derivative of $\int_\Omega \vec c(t,\vec z) \cdot \vec e \dd \vec z$, and via integration by parts and the no-flux boundary conditions $\vec e^k \cdot \partial_{\vec n} \vec c = 0$ on the conserved quantities, one finds that $\int_\Omega \vec c(t,\vec z) \cdot \vec e \dd \vec z = \int_\Omega \vec c^0(\vec z) \cdot \vec e \dd \vec z$ is independent of $t \geq 0$, hence the results follows from positivity of $\vec c$ and $\vec e \in (0, \infty)^N$.
  \newline
  To get an $\LL_1 \LL_\infty$-a priori estimate, one can employ the conservation vector $\vec e \in (0, \infty)^N$ and consider the function $w$ on $[0,T] \times \overline{\Omega}$ defined by $w(t,\vec z) = \int_0^t \bb D \vec c(s,\vec z) \cdot \vec e \dd \vec z$ which satisfies the differential inequality $\partial_t w - d_\mathrm{max} \Delta w \leq d_\mathrm{max} \vec c^0 \cdot \vec e$ in the bulk and homogeneous no-flux boundary conditions $- d_\mathrm{max} \partial_{\vec n} w = 0$. The a priori estimate can then derived by applying the parabolic maximum principle to $w$.
  \newline
  The $\LL_2 \LL_2$-estimate can be established by considering for $T \in (0, T_\mathrm{max})$, and $d_\mathrm{min} := \min_i d_i$, $d_\mathrm{max} = \max_i d_i > 0$ the integral
   \[
    \int_0^T \int_\Omega (\bb D \vec c(t,\vec z) \cdot \vec e) (\vec c(t,\vec z) \cdot \vec e) \dd \vec z \dd \vec t
     \leq \frac{d_\mathrm{max}}{d_\mathrm{min}} T \norm{\vec c^0 \cdot \vec e}_{L_\infty(\Omega)} \norm{\bb D \vec c^0 \cdot \vec e}_{L_1(\Omega)},
   \]
  and using positivity of $\vec c$.
  \newline
  The entropy identity can be shown as follows.
  By the theorem on derivatives of parameter-integrals, and as the derivative of the function $(0,\infty) \in x \mapsto x (\ln x - 1)$ is $\ln x $ for all $x \in (0, \infty)$, one finds that
   \begin{align*}
    &\frac{\dd}{\dd t} \int_\Omega \sum_{i=1}^3 c_i (\mu_i^0 + \ln (c_i) - 1) \dd \vec z
     \\
     &= \int_\Omega \sum_{i=1}^3 \partial_t c_i (\mu_i^0 + \ln (c_i)) \dd \vec z
     = \int_\Omega \sum_{i=1}^3 d_i \Delta c_i (\mu_i^0 + \ln (c_i))
     \\
     &= - \int_\Omega \sum_{i=1}^3 \frac{d_i \abs{\nabla c_i}^2}{c_i} \dd \vec z
      + \int_\Sigma \sum_{i=1}^3 \mu_i d_i \partial_{\vec n} c_i \dd \sigma(\vec z)
   \end{align*}
  The assertion will be established if $\sum_{i=1}^3 (\mu_i^0 + \ln (c_i)) d_i \partial_{\vec n} c_i = 0$ can be proved.
  From the boundary conditions $\vec e^k \cdot \partial_{\vec n} (\bb D \vec c) = 0$ ($k  = 1,2$), there is a scalar function $\eta: [0,T_0) \times \Sigma \rightarrow \R$ such that $\partial_{\vec n} (\bb D \vec c)|_\Sigma = \eta \, (-1,-1,1)^\mathsf{T} = \eta \vec \nu^{\Sigma,1}$.
  Hence,
   \begin{align*}
    \sum_{i=1}^3 \mu_i d_i \partial_{\vec n} c_i
     = \vec \mu \cdot \partial_{\vec n} (\bb D \vec c)
     = \eta \vec \mu \cdot \vec \nu^{\Sigma,1}
     = \eta \sum_{i=1}^N \mu_i \nu_i^{\Sigma,1}
     = \eta \mathcal{A}^\Sigma
     = 0,
   \end{align*}
  since $\mu_i|_\Sigma = \mu_i^\Sigma$ at all times $t \geq 0$ and positions $\vec z \in \Sigma$ (sorption processes in equilibrium) and $\mathcal{A}^\Sigma_a = 0$ at all times $t \geq 0$ and all positions $\vec z \in \Sigma$ (chemical reaction on the surface in equilibrium).
 Therefore, this contribution to the sum vanishes, and the entropy identity follows by the fundamental theorem of calculus.
 \end{proof}
 
  \subsection{Equilibria}
 In this subsection we compute the (spatial homogeneous) equilibria compatible with the initial condition $\vec c(0,\cdot) = \vec c^0$.
 As the vectors $(1,1,0)^\mathsf{T}$ and $(0,1,1)^\mathsf{T}$ are conservation vectors, i.e.\
  \[
   \int_\Omega (c_1(t,\vec z) + c_3(t,\vec z)) \dd \vec z
    = \int_\Omega (c_1^0 + c_3^0) \dd \vec z,
    \quad
   \int_\Omega (c_2(t,\vec z) + c_3(t,\vec z)) \dd \vec z
    = \int_\Omega (c_2^0 + c_3^0) \dd \vec z
  \]
 are conserved quantities under the evolution of the reaction-diffusion system, we may set
  \[
   a
    = \frac{1}{\abs{\Omega}} \int_\Omega (c_1^0 + c_3^0) \dd \vec z,
    \quad \text{and} \quad
   b
    = \frac{1}{\abs{\Omega}} \int_\Omega (c_2^0 + c_3^0) \dd \vec z
  \]
 Restricting to physically relevant systems, we assume that $\vec c^0 \geq \vec 0$ is pointwise and componentwise non-negative, so that the solution $\vec c \geq \vec 0$ is non-negative as well. If $a = 0$, then $c_1^0 = c_3^0 \equiv 0$ are identically zero, and the solution to the reaction-diffusion-sorption system is clearly given by $\vec c = (0, c_2, 0)^\mathsf{T}$ where $c_2$ is the (unique) solution to the no-flux (homogeneous Neumann) reaction diffusion system
  \[
   \begin{cases}
    \partial_t c_2 - d_2 \Delta c_2
     = 0
     &\text{in } (0, \infty) \times \Omega,
     \\
    - d_2 \partial_{\vec n} c_2|_\Sigma
     = 0,
     &\text{on } (0, \infty) \times \Sigma,
     \\
    c_2(0,\cdot)
     = c_2^0
     &\text{on } \overline{\Omega},
   \end{cases}
  \]
 which (for sufficiently regular initial data) exists and converges to $c_2^\infty = b = \frac{1}{\abs{\Omega}} \int_\Omega c_2^0 \dd \vec z \geq 0$, so that the corresponding equilibrium is $\vec c^\infty = (0, c_2^\infty, 0)^\mathsf{T} \in \R_+^3$.
 Similarly, the case $b = 0$, hence $c_2^0 = c_3^0 \equiv 0$ can be handled, for which the corresponding equilibrium reads $\vec c^\infty = (c_1^\infty, 0, 0)^\mathsf{T} \in \R_+^3$ with $c_1^\infty = a = \frac{1}{\abs{\Omega}} \int_\Omega c_1^0 \dd \vec z$.
 \newline
 Thus, in the following we may and will assume that $a, b > 0$.
 First note, that in this case, the equilibria are spatial homogeneous and strictly positive, as can be seen as follows:
  Since for non-negative initial data, which are not identically zero in one component, this component of the solution becomes strictly positive in $\Omega$ immediately, one may actually deduce that for $a, b > 0$, the solution becomes immediately strictly positive in $\Omega$ \emph{in all components}. E.g.\ assume that $c_3 \equiv 0$ on some small time interval $[0, \delta]$; then from the boundary coupling $- d_1 \partial_{\vec n} c_1|_\Sigma = - d_2 \partial_{\vec n} c_2|_\Sigma = 0$ on $[0, \delta] \times \Sigma$ and $c_1 c_2|_\Sigma = 0$ on $[0, \delta] \times \Sigma$ as well. However, since $\partial_t c_i - d_i \Delta c_i = 0$ and $c_i > 0$ in $[0,\delta] \times \Omega$ for $i = 1, 2$, this contradicts Hopf's boundary value lemma. Similarly, one may exclude the case $c_i \equiv 0$ for some $i \in \{1, 2\}$ on $[0, \delta] \times \overline{\Omega}$. As a result, $c_i > 0$ for all $i = 1, 2, 3$ and $(t,\vec z) \in (0, \infty) \times \overline{\Omega}$.
  In particular, for positive times $t > 0$, the free energy
   \[
    \psi(t)
     = \sum_{i=1}^3 \int_\Omega c_i (\mu_i^0 + \ln(c_i) - 1)
     < \infty
   \]
  and thus, from the free energy dissipation relation it follows that any equilibrium state $\vec c^\infty$ necessarily has to satisfy
   \[
    \sum_{i=1}^3 \int_\Omega \frac{d_i \abs{\nabla c_i^\infty}^2}{(c_i^\infty)^2} \dd \vec z
     = 0.
   \]
  Therefore, any equilibrium state $\vec c^\infty \in \R_+^3$ is constant, and when $a, b > 0$, then $\vec c^\infty \in (0, \infty)^3$.
  Moreover, from the conservation laws one finds that
   \begin{align*}
    a
     = \frac{1}{\abs{\Omega}} \int_\Omega (c_1^0 + c_3^0) \dd \vec z
     \overset{!}{=} c_1^\infty + c_3^\infty,
     \quad \text{and} \quad
    b
     = \frac{1}{\abs{\Omega}} \int_\Omega (c_2^0 + c_3^0) \dd \vec z
     \overset{!}{=} c_2^\infty + c_3^\infty.
   \end{align*}
 To find the equilibrium states compatible with the initial data $\vec c^0$, one thus has to solve the nonlinear problem resulting from the boundary conditions
  \[
   \begin{cases}
    c_3^\infty - \kappa c_1^\infty c_2^\infty
     = 0,
     \\
    \text{under the constraint }
    a = c_1^\infty + c_3^\infty, \, b = c_2^\infty + c_3^\infty.
   \end{cases}
  \]
 which, by inserting the relation for $c_3^\infty$ into the constraints, is equivalent to
  \[
   \begin{cases}
    c_1^\infty ( 1 + \kappa c_2^\infty)
     = a,
     \\
    c_2^\infty (1 + \kappa c_1^\infty)
     = b,
     \\
    c_3^\infty
     = \kappa c_1^\infty c_2^\infty
   \end{cases}
   \quad \Leftrightarrow \quad
   \begin{cases}
    c_1^\infty ( 1 + \frac{\kappa b}{1 + \kappa c_1^\infty})
     = a,
     \\
    c_2^\infty
     = \frac{b}{1 + \kappa c_2^\infty},
     \\
    c_3^\infty
     = \kappa c_1^\infty c_2^\infty
   \end{cases}
  \]
 The (uniquely non-negative) solution of this nonlinear system is then given by
  \begin{align*}
   c_1^\infty
    &= \frac{1}{2} \sqrt{\frac{2(a+b)}{\kappa} + (b-a)^2 + \kappa^{-2}} - \frac{1}{2} \left( \kappa^{-1} + (b-a) \right),
    \\
   c_2^\infty
    &= \frac{1}{2} \sqrt{\frac{2(a+b)}{\kappa} + (b-a)^2 + \kappa^{-2}} - \frac{1}{2} \left( \kappa^{-1} - (b-a) \right),
    \\
   c_3^\infty
    &= \frac{1}{2} \sqrt{\frac{2(a+b)}{\kappa} + (b-a)^2 + \kappa^{-2}} + \frac{1}{2} \left( \kappa^{-1} + a + b \right).
    \\
  \end{align*}
 This is the unique equilibrium compatible with the initial data $\vec c^\infty \geq \vec 0$ such that $a, b > 0$.
 Note that as $a \rightarrow 0$ and/or $b \rightarrow 0$, the corresponding equilibria converge to the equilibria for $a = 0$ and/or $b = 0$, resp., as well.
 
 \subsection{Some comments on more general reaction-diffusion-sorption systems}

 As the statements and proofs for local-in-time well-posedness and positivity of solutions have only been presented for the particular model problem \eqref{MP}, some comments are in place for possible generalisations to more general reaction mechanisms.
 
 \subsubsection{Reactions in the bulk phase}
 Bulk chemistry can be generally allowed for in the local-in-time well-posedness and positivity results. These reactions may be handled analogously to the situation with bulk chemistry as an additional semilinear term appearing on the right hand side of the bulk diffusion equation, and which typically has enough regularity (polynomial form) to be included in the contraction mapping principle argument, cf.\ the standard literature on semilinear reaction-diffusion-equations; see e.g.\ \cite{Pie10}.
 
 \subsubsection{General reaction schemes for the surface chemistry}
 For general surface chemistry with reactions of general type $\sum_i \alpha_i^{\Sigma,a} A_i^\Sigma \rightleftharpoons \sum_i \beta_i^{\Sigma,a} A_i^\Sigma$, following the linearisation scheme as for the model problem \eqref{MP}, one obtains a linearised version of the Dirichlet type boundary conditions $\kappa_a^f \vec c|_\Sigma^{\vec \alpha^{\Sigma,a}} = \kappa_a^b \vec c|_\Sigma^{\vec \beta^{\Sigma,a}}$ as
  \[
   \kappa_a^f \sum_{i: \alpha_i^{\Sigma,a} > 0} \alpha_i^{\Sigma,a} \vec c^{\vec \alpha^{\Sigma,a} - \vec e_i} - \kappa_a^b \sum_{i: \beta_i^{\Sigma,a} > 0} \beta_i^{\Sigma,a} \vec c^{\vec \beta^{\Sigma,a} - \vec e_i}
    = h_a
  \]
plus no-flux boundary conditions corresponding to conserved quantities. Very often, but depending on the particular structure of the reaction mechanisms, also for such more general reaction-diffusion-sorption systems, the strategy used in the proof of theorem \ref{thm:l-it_existence} may be employed to establish local-in-time existence of strong solutions, provided the initial data $\vec c^0$ are in $\BB_{pp}^{2-2/p}(\Omega;\R^N)$ and satisfy the compatibility conditions.
 However, this approach is, in general, quite tedious and essentially relies on the diagonal choice of the diffusion matrix $\bb D$ in the bulk, so that the reaction-diffusion equations are only coupled via the boundary conditions, and in the general case additionally over the semilinear reaction rate functions. In that case well-known maximal regularity and optimality results for the heat equation with inhomogeneous Neumann and Dirichlet boundary condition can be used again.
 For more general classes of diffusion in the bulk, say Fick--Onsager or Maxwell--Stefan diffusion, cross-diffusion in the bulk is allowed (and from a thermodynamic perspective even obligatory), so that for these types of systems a different approach is needed.
 For results in this direction, a possible approach is via the general maximal regularity and optimality results in the spirit of \cite{DeHiPr03} and \cite{DeHiPr07}, which need to be adjusted to the particular situation here, where at the same point $\vec z \in \Sigma$ both Neumann and Dirichlet type boundary conditions are imposed on the system; a situation which is not covered by the theory in \cite{DeHiPr03} and \cite{DeHiPr07} as the definition of the principle part of the boundary symbol used there leads to a boundary symbol without Dirichtlet type terms (and, hence, the Lopatinskii--Shapiro condition for the principle parts does not hold then). The abstract theory of \cite{DeHiPr03} and \cite{DeHiPr07}, however, can be extended by slightly adjusting the notion of the principal boundary symbol; for related results, see \cite{DeKa13}, where the notion of Newton polygons is heavily used.
 For the model problem \eqref{MP} and a large class of more general reaction-diffusion systems, e.g.\ reversible versions of the free radical addition, nucleophilic substitution, the Lindemann--Hinshelwood mechanism and electrocatalysis models, the latter approach is feasible as well, as one can check the Lopatinskii--Shapiro condition at least for non-negative initial data (which serve as reference data in the linearised system) to establish $\LL_p$-maximal regularity and optimality results.
 We abstain from giving more details here, but only refer to the upcoming paper \cite{AuBo19+2}.

 \section{Summary and outlook}
 This manuscript covers both the modelling and the analysis of a reaction-diffusion-sorption system arising from a bulk-surface reaction-diffusion-sorption system, when considering the formal limit of fast sorption processes and fast surface chemistry, and transmission takes place in a sublayer near the surface which is very thin.
 On the modelling side, several fast limit models have been proposed, based on a dimensional analysis of the thermodynamic processes. It has been demonstrated that depending on the time resolution, models of different accuracy and mathematical complexity arise. \newline
 The particular case of a fast-sorption--fast-surface-chemistry limit with fast transmission between bulk and surface has been further analysed in the second part of this manuscript.
 As the surface chemistry takes place even faster than the transmission between bulk and surface, the boundary conditions of the resulting reaction-diffusion-system are of \emph{mixed type}, in the sense that at the same time no-flux boundary conditions are imposed on the conserved quantities' part whereas the chemical reaction equilibrium on the surface corresponds to nonlinear relations between the area surface concentrations $c_i^\Sigma$, or their chemical potentials $\mu_i^\Sigma$.
 Due to the sorption equilibrium condition $s_i^\Sigma(\vec c|_\Sigma, \vec c^\Sigma) = 0$ corresponding to the fast-sorption limit, there further is a -- typically nonlinear -- relation between the are surface concentrations $c_i^\Sigma$ and the boundary traces of the bulk concentrations $c_i|_\Sigma$.
 Interpreting the sorption equilibrium condition rather as an equilibrium condition for the area and bulk chemical potentials, $\mu_i|_\Sigma = \mu_i^\Sigma$, one finds that the particular model for the surface chemical potentials does \emph{not} play a role for the resulting limit problem, as the reaction rates (in the reaction models used here) are completely determined by the surface chemical potentials $\mu_i^\Sigma$, so in the limit case by the traces of the bulk chemical potentials $\mu_i|_\Sigma$.
 The particular structure of the conservation vectors $\vec e^k$  for the surface chemical reactions being orthogonal to the stochiometric vectors $\vec \nu^{\Sigma,a}$ of the surface chemical reactions, can then be used to derive $\LL_p$-maximal regularity for a suitable linearisation of the semilinear fast limit model. From there (local-in-time) existence and uniqueness of strong solutions, blow-up criteria and a-priori bounds have been derived.

\end{document}